\documentclass[a4paper,12pt]{amsart}

\usepackage{amsmath}
\usepackage{amssymb}
\usepackage{mathrsfs}
\usepackage{ifthen}
\usepackage{graphicx}
\usepackage{tikz-cd}
\usepackage[T1]{fontenc} 

\def\switchlinenumbers{\@ifstar
    {\let\makeLineNumberOdd\makeLineNumberRight
     \let\makeLineNumberEven\makeLineNumberLeft}%
    {\let\makeLineNumberOdd\makeLineNumberLeft
     \let\makeLineNumberEven\makeLineNumberRight}%
    }

\def\setmakelinenumbers#1{\@ifstar
  {\let\makeLineNumberRunning#1%
   \let\makeLineNumberOdd#1%
   \let\makeLineNumberEven#1}%
  {\ifx\c@linenumber\c@runninglinenumber
      \let\makeLineNumberRunning#1%
   \else
      \let\makeLineNumberOdd#1%
      \let\makeLineNumberEven#1%
   \fi}%
  }

\nonstopmode \numberwithin{equation}{section}
\newtheorem*{theorem*}{Theorem}

\newtheorem{thm}{Theorem}[section]
\newtheorem{cor}[equation]{Corollary}
\newtheorem{lem}{Lemma}[section]
\newtheorem{prop}[equation]{Proposition}

\theoremstyle{definition}
\newtheorem{defn}{Definition}[section]
\newtheorem{example}{Example}[section]

\newtheorem{prob}[equation]{Problem}
\newtheorem{rem}{Remark}[section]

\newcounter{minutes}
\divide\time by 60
\newcounter{hours}
\multiply\time by 60
\addtocounter{minutes}{-\time}

\newcounter {own}
\def\theown {\thesection       .\arabic{own}}

\newenvironment{pf}[1][]{%
 \vskip 3mm
 \noindent
 \ifthenelse{\equal{#1}{}}%
  {{\slshape Proof. }}%
  {{\slshape #1.} }%
 }%
{\qed\bigskip}

\newcounter{alphabet}

\def\be{\begin{equation}}
\def\ee{\end{equation}}

\newcommand{\bee}{\begin{enumerate}}
\newcommand{\eee}{\end{enumerate}}

\newcommand{\blem}{\begin{lem}}
\newcommand{\elem}{\end{lem}}
\newcommand{\bthm}{\begin{thm}}
\newcommand{\ethm}{\end{thm}}
\newcommand{\bcor}{\begin{cor}}
\newcommand{\ecor}{\end{cor}}
\newcommand{\beg}{\begin{examp}}
\newcommand{\eeg}{\end{examp}}
\newcommand{\begs}{\begin{examples}}
\newcommand{\eegs}{\end{examples}}
\newcommand{\bdefe}{\begin{defin}}
\newcommand{\edefe}{\end{defin}}
\newcommand{\bprob}{\begin{prob}}
\newcommand{\eprob}{\end{prob}}
\newcommand{\bei}{\begin{itemize}}
\newcommand{\eei}{\end{itemize}}
\newcommand{\real}{{\operatorname{Re}\,}}
\newcommand{\imaginary}{{\operatorname{Im}\,}}

\newcommand{\norm}[1]{\left\lVert#1\right\rVert}
\newcommand{\abs}[1]{\left\lvert#1\right\rvert}

\begin{document}
\allowdisplaybreaks
\title{Composition operators and Carleson embeddings for the Nevanlinna class of Dirichlet series}

\author{Vasudevarao Allu}
\address{Vasudevarao Allu,
Department  of Mathematics, School of Basic Sciences,
Indian Institute of Technology Bhubaneswar,
Bhubaneswar-752050, Odisha, India.}
\email{avrao@iitbbs.ac.in}

\author{Dipon Kumar Mondal}
\address{Dipon Kumar Mondal,
Department  of Mathematics, School of Basic Sciences,
Indian Institute of Technology Bhubaneswar,
Bhubaneswar-752050, Odisha, India.}
\email{diponkumarmondal@gmail.com}

\subjclass[{AMS} Subject Classification:]{Primary 47B33, 30H10, 30H15; Secondary 30B50, 46E50.}
\keywords{Nevanlinna Class of Dirichlet series, Hardy space of Dirichlet series, Carleson measure, Composition operators.}

\def\thefootnote{}
\footnotetext{ {\tiny File:~\jobname.tex,
printed: \number\year-\number\month-\number\day,
          \thehours.\ifnum\theminutes<10{0}\fi\theminutes }
} \makeatletter\def\thefootnote{\@arabic\c@footnote}\makeatother

\begin{abstract}
This paper systematically investigates the structural, analytical, and operator-theoretic properties of the Nevanlinna class $\mathcal{N}_u$ of Dirichlet series, introduced by Brevig and Perfekt [Adv.\ Math., 2021] and further developed by Guo \textit{et al.}\ [Ann.\ Inst.\ Fourier (Grenoble), 2025]. First, we examine the topological structure of $\mathcal{N}_u$. We then prove a Littlewood--Paley type identity for functions in $\mathcal{N}_u$, establish an equivalent characterization via vertical limit functions, and demonstrate that the interchange of limits in this identity is permissible. In addition, we provide an alternative proof of this identity using potential-theoretic approach. Applying these analytical tools, we study composition operators $C_\Phi$ acting on $\mathcal{N}_u$ and characterize those symbols $\Phi$ that induce bounded composition operators $C_\Phi$. Utilizing Carleson measure techniques on half-planes and infinite-dimensional tori, we characterize the symbols $\Phi$ that generate bounded and compact composition operators. In particular, we prove the complete equivalence between the (vanishing) Carleson embedding condition and the geometric (vanishing) Carleson condition on Carleson squares. We conclude with two function-theoretic applications to functions in $\mathcal{N}_u$.
\end{abstract}

\maketitle
\pagestyle{myheadings}
\markboth{Vasudevarao Allu and Dipon Kumar Mondal}{Nevanlinna class of Dirichlet series}

\section{Introduction}

The theory of Nevanlinna classes exhibits a rich duality structure, driven by its unusual topological and metric properties (see, for instance, \cite{Haldimann-Studia-2001, ShapiroShields-Amer-1975}). These classes form a prominent family of topological algebras, collectively known as Nevanlinna algebras. In the present paper, we study the Dirichlet series analogues of classical Nevanlinna spaces on the open unit disk $\mathbb{D}$.
 A Dirichlet series is a series of function of the form $$f(s) = \sum_{n=1}^{\infty}a_nn^{-s},$$ where \(s = \sigma + it\) is a complex number with real part $\real(s)=\sigma.$ 
We denote by $\mathcal{H}^2$ the Hardy space of Dirichlet series consisting of functions $f$ whose coefficients are square summable, that is,
    \begin{equation}\label{Dipon-vasu-p2-eqn-001}
    	\norm{f}^2_{\mathcal{H}^2} = \sum_{n=1}^{\infty}|a_n|^2<\infty.
    \end{equation}
    Furthermore, let $\mathcal{H}^{\infty}$ denote the set of all bounded analytic functions in the right half-plane which admit a representation as a Dirichlet series converging in some half-plane $\mathbb{C}_\theta := \{s=\sigma+ it : \real(s) > \theta\}$ for a given $\theta > 0$. The space $\mathcal{H}^{\infty}$ is equipped with the supremum norm. A classical theorem of Harald Bohr \cite{Bohr-Angew-1913} asserts that if $f$ belongs to $\mathcal{H}^{\infty}$, then the associated Dirichlet series converges uniformly to $f$ in $\mathbb{C}_\theta$ for every $\theta > 0$. In particular, if $f$ satisfies $\sigma_u(f)\le 0$, then $f$ is almost periodic on $\overline{\mathbb{C}_\theta}$ for every $\theta>0$. A standard reference for the theory of almost periodic functions is due to Besicovitch \cite{Besicovitch-AP-Book}. The abscissa of uniform convergence of a Dirichlet series $f$, denoted by $\sigma_u(f)$, is defined by

\begin{equation*}
		\sigma_u(f): = \inf\bigg\{\theta\in\mathbb{R}: f(s)= \sum_{n=1}^{\infty}a_nn^{-s}\,\,\,\mbox{converges uniformly on}\,\,\, {\mathbb{C}_\theta}\bigg\}\in\overline{\mathbb{R}},
\end{equation*}
where $\overline{\mathbb{R}} = \mathbb{R}\cup\{\pm\infty\}$ is the extended real line. Also, let $\sigma_b(f)$ denote the infimum of all $\theta$ in $\mathbb{R}$ such that $f$ may be extended (by analytic continuation if necessary) to a bounded analytic function in $\mathbb{C}_\theta$. Bohr \cite{Bohr-Angew-1913} proved that $\sigma_u(f)=\sigma_b(f).$ Let $\mathcal{P}_D$ denote the set of all Dirichlet polynomials of the form $P(s) = \sum_{n=1}^{N}a_nn^{-s}.$ For $0<p<\infty$ and $P\in\mathcal{P}_D$, it follows from the almost periodicity of the function $t\mapsto |P(it)|^p$ that the limit
\begin{equation*}
	\norm{P}^p_{\mathcal{H}^p}= \lim_{T\to\infty}\frac{1}{2T}\int_{-T}^{T}|P( it)|^pdt
\end{equation*}
exists (see for instance \cite{Queffelec-TRM-2020}). The Hardy space of Dirichlet series $\mathcal{H}^p$ is defined to be the completion of $\mathcal{P}_D$ in the metric $\norm{.}^p_{\mathcal{H}^p}$. \\[0.5mm]

The classical Nevanlinna class $N(\mathbb{D})$  is well studied (see, e.g., \cite{ChoaKim-PAMS-1997,Duren-H^p space-1970,Masri-Thesis-1985}). By contrast, its analogue for Dirichlet series has received little attention. Recently, Guo \textit{et al.} \cite{GuoZhou-AnnalFourier-2025} introduced the Nevanlinna class on the infinite-dimensional polydisk. It is well-known that functions of infinitely many variables and Dirichlet series are intimately linked via the Bohr correspondence. In connection with composition operators on Dirichlet series spaces, Brevig and Perfekt \cite{Brevig-AdvMath-2021} introduced the following notion of the Nevanlinna class for a Dirichlet series $f$.

 \begin{defn}\label{Dipon-vasu-p2-defn-02}
	A Dirichlet series $f$ with $\sigma_u(f)\le 0$ is defined in the \textit{Nevanlinna class of Dirichlet series} $\mathcal{N}_u$ if the following condition holds:
	\begin{equation}\label{Dipon-vasu-p2-eqn-005}
		{\limsup_{\sigma\to 0^+}}\lim_{T\to\infty}\frac{1}{2T}\int_{-T}^{T}\log^+|f(\sigma +it)|\,dt<\infty,
	\end{equation}
	where $\log^+(f) = \max\{\log(f), 0\}$.
\end{defn}
 If $\sigma_u(f) \le 0$ and $f \in \mathcal{H}^p$, then $f \in \mathcal{N}_u$, which follows from the elementary inequality $\log^+ t \le \frac{1}{p}\, t^p$ (valid for $t \ge 0$ and $p > 0$). We emphasize that the condition $\sigma_u(f) \le 0$ is essential; without it, the class $\mathcal{N}_u$ cannot be properly defined. For example, consider the Dirichlet series$$g(s) = \sum_{n=2}^\infty \frac{n^{-s}}{\sqrt{n} \log n}.$$It is easy to see that $g \in \mathcal{H}^2$, whereas $\sigma_u(g) = 1/2$.
 	The inequalities  
 	\begin{equation}\label{Dipon-vasu-p2-eqn-044}
 		\log^+(x)\le\log(1+x)\le \log(2)+\log^{+}(x),\,\,\, x\ge 0
 	\end{equation}
 	  along with \eqref{Dipon-vasu-p2-eqn-005} ensures that	$f\in \mathcal{N}_u$ if, and only if,
 \begin{equation}\label{Dipon-vasu-p2-eqn-002}
 	\norm{f}_{0}:= \limsup_{\sigma\to 0^+}\lim_{T\to\infty}\frac{1}{2T}\int_{-T}^{T}\log(1+|f(\sigma+it)|)\,dt <\infty,\,\,\,\, f\in\mathcal{N}_u.
 \end{equation}
 Here, the class $\mathcal{N}_u$ is an $F$-space; that is, a topological vector space whose topology is induced by a complete, translation-invariant metric defined by
 \begin{equation*}
 	d_0(f,g) = \norm{f-g}_0 \quad \text{for } f,g \in \mathcal{N}_u.
 \end{equation*}
 Moreover, $\mathcal{N}_u$ forms an $F$-algebra that is neither locally bounded nor locally convex. Although it is not a normed algebra, several authors denote this quantity by $\vert\vert\vert \cdot \vert\vert\vert_0$. To maintain standard notation without risk of confusion, we adopt the norm-like notation $\Vert{}\cdot\Vert{}_0$ as defined in \eqref{Dipon-vasu-p2-eqn-002}. For further algebraic and topological properties of this space, we refer the reader to \cite{GuoZhou-AnnalFourier-2025, Haldimann-Studia-2001, ShapiroShields-Amer-1975}.
 Since $f$ is a Dirichlet series with $\sigma_u(f) \le 0$ and $h(x) = \log(1+x)$ is continuous and non-decreasing on $[0, \infty)$, the composition $t \mapsto \log(1+\vert{}f(it)\vert{})$ is almost periodic (see, e.g., \cite{Besicovitch-AP-Book,Queffelec-TRM-2020}). Consequently, the limit \eqref{Dipon-vasu-p2-eqn-002} exists.
In contrast, Guo \textit{et al.} \cite{GuoZhou-AnnalFourier-2025} defined the Nevanlinna--Dirichlet class $\mathcal{N}$ as the completion of $\mathcal{N}_u$ under the metric $\Vert{}\cdot\Vert{}_0$ given in \eqref{Dipon-vasu-p2-eqn-002}. Under the canonical embedding of \(\mathcal N_u\) into \(\mathcal N\), the topology induced on \(\mathcal N_u\) by the completion coincides with the topology induced by \(\|\cdot\|_0\). \\[0.5mm]

 In function theory, the Littlewood–Paley identity serves as a fundamental tool for establishing the boundedness and compactness of operators on a given function space. However, the derivation of such identities is not often straightforward. The primary objective of this article is to develop a Littlewood–Paley type identity for Dirichlet series of the form \( f(s) = \sum_{n=1}^{\infty} a_n n^{-s} \) in relation to the norm \( \|f\|_0 \).
\begin{thm}\label{Dipon-vasu-p2-thm-01}
	Suppose the Dirichlet series $f$ is in $\mathcal{N}_u$ satisfying $\sigma_u(f)\le 0$ and $f\not\equiv 0$. Then 
	\begin{equation}\label{Dipon-vasu-p2-eqn-004}
		\norm{f}_{0}= \log(1+ |f(+\infty)|) + \lim_{{\sigma_0}\to 0^+}\lim_{T\to\infty}\frac{1}{2T}\int_{{\sigma_0}}^{\infty}\int_{-T}^{T}\frac{|f'(\sigma+it)|^2}{|f(\sigma+it)|(1+|f(\sigma+it)|)^2}\,(\sigma-{\sigma_0}) \,dt\, d\sigma.
	\end{equation}
\end{thm}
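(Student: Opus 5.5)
We use the Laplacian of $u=\log(1+|f|)$. Where $f\neq0$, write $|f|=e^{v}$ with $v=\log|f|$ harmonic, so $u=\phi(v)$ with $\phi(x)=\log(1+e^x)$. Then
\begin{equation*}
\Delta u=\phi''(v)|\nabla v|^2=\frac{e^v}{(1+e^v)^2}\,\frac{|f'|^2}{|f|^2}=\frac{|f'|^2}{|f|(1+|f|)^2}.
\end{equation*}
This is locally integrable even near zeros of $f$. At a zero of order $m$ it behaves like $m^2|s|^{2m-2}/|s|^m$, which is integrable in the plane. Moreover $u$ is subharmonic, with distributional Laplacian equal to this density: there is no point mass at zeros, because $u$ is continuous there. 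So the integrand in \eqref{Dipon-vasu-p2-eqn-004} is exactly $\Delta u$.

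The plan is a one-dimensional Green's formula in $\sigma$ after averaging in $t$. Fix $\sigma_0>0$ and set
\begin{equation*}
M(\sigma)=\lim_{T\to\infty}\frac{1}{2T}\int_{-T}^{T}u(\sigma+it)\,dt, \qquad \sigma\ge\sigma_0.
\end{equation*}
This limit exists by almost periodicity of $f$ on $\overline{\mathbb{C}_{\sigma_0}}$, by Bohr's theorem as recalled in the introduction. Averaging $\Delta u$ over $[-T,T]$ and integrating by parts in $t$ leaves boundary terms $\frac{1}{2T}[\partial_t u]_{-T}^{T}$. These are $O(1/T)$, since $\partial_t u$ is bounded on $\overline{\mathbb{C}_{\sigma_0}}$: indeed $f'$ is bounded there by Cauchy estimates, and $|\nabla u|\le|f'|$. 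So in the distributional sense on $(\sigma_0,\infty)$ we get
\begin{equation*}
M''(\sigma)=\mu(\sigma):=\lim_{T}\frac{1}{2T}\int_{-T}^{T}\Delta u(\sigma+it)\,dt.
\end{equation*}
To justify this rigorously, I would apply Green's formula on rectangles $[\sigma_0,R]\times[-T,T]$ with the kernel $\sigma-\sigma_0$, and pass to the limit $T\to\infty$ first.

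Concretely, Green's identity with $w=\sigma-\sigma_0$ (which is harmonic and vanishes at $\sigma=\sigma_0$) gives
\begin{equation*}
\iint w\,\Delta u=\int_{\partial}\bigl(w\,\partial_n u-u\,\partial_n w\bigr).
\end{equation*}
On the right edge $\sigma=R$ this yields $(R-\sigma_0)\partial_\sigma u(R,t)-u(R,t)$. On the left edge it yields $+u(\sigma_0,t)$, since $w=0$ there and $\partial_n w=-1$. The horizontal edges contribute $O(R^2\sup|\partial_t u|)$, which becomes negligible after dividing by $2T$. Letting $T\to\infty$ gives
\begin{equation*}
\frac{1}{2T}\text{-averaged }\iint=M(\sigma_0)-M(R)+(R-\sigma_0)M'(R),
\end{equation*}
where $M'(R)$ is the averaged $\partial_\sigma u$.

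Next let $R\to\infty$. Since $f(s)\to a_1=f(+\infty)$ uniformly as $\sigma\to\infty$, we have $M(R)\to\log(1+|f(+\infty)|)$. Also $f'(s)=-\sum a_n\log n\,n^{-s}=O(2^{-R})$ uniformly. Hence $|(R-\sigma_0)\partial_\sigma u|\le R|f'|\to0$. The left side is a nonnegative integrand, so monotone convergence in $R$ applies. However, the order of the limits in $T$ and $R$ matters. I would handle this by proving the identity for finite $R$ with the $T$-limit inside the $\sigma$-integral, using uniform convergence of the Bohr means over compact $\sigma$-ranges. Uniformity holds because $\Delta u$ is almost periodic in $t$ uniformly for $\sigma\in[\sigma_0,R]$ wherever $f$ has no zeros. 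This yields
\begin{equation*}
M(\sigma_0)=\log(1+|f(+\infty)|)+\lim_T\frac{1}{2T}\int_{\sigma_0}^{\infty}\!\int_{-T}^{T}\Delta u\,(\sigma-\sigma_0)\,dt\,d\sigma.
\end{equation*}

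Finally let $\sigma_0\to0^+$. The function $M$ is convex, since $M''=\mu\ge0$, and bounded below. It is also nonincreasing: $M'(R)\to0$ as $R\to\infty$ and $M'$ is nondecreasing, so $M'\le 0$. Hence $\lim_{\sigma_0\to0^+}M(\sigma_0)$ exists and equals the $\limsup$ defining $\norm{f}_0$. This gives \eqref{Dipon-vasu-p2-eqn-004}, and it also shows that the outer limit in the statement exists.

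The main obstacle is interchanging $\lim_T$ with the $\sigma$-integral near zeros of $f$. There $\Delta u$ is unbounded, of size $|s-\rho|^{m-2}$ for a zero $\rho$ of order $m$, so uniform almost periodicity fails. My first attempt would be to dominate the average of $\Delta u$ on each strip using the averaged Green identity on thin strips $[a,b]$: the strip mass equals $M'(b)-M'(a)$ computed from boundary data, which is bounded in $T$. Combined with local integrability, this gives the uniform integrability needed to exchange the limits.
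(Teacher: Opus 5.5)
Your setup is the paper's in substance: Green's identity on rectangles $[\sigma_0,R]\times[-T,T]$, vertical means, then $R\to\infty$ and $\sigma_0\to0^+$. Inserting the weight $\sigma-\sigma_0$ directly into Green's second identity is a tidier version of the paper's route, which first proves the derivative formula of Lemma~\ref{Dipon-vasu-p2-lem-06} and then integrates via Tonelli. Up to your finite-$R$ identity the argument is sound. Green's identity for $u$, which is only Lipschitz at zeros of $f$, can be justified with the paper's regularization $u_\varepsilon=\log(1+\sqrt{|f|^2+\varepsilon^2})$, using $\Delta u_\varepsilon\to\Delta u$ in $L^1_{\mathrm{loc}}$.

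The gap is the passage $R\to\infty$. You rightly note that $\lim_R$ and $\lim_T$ must be exchanged. But the device you propose, moving $\lim_T$ inside the $\sigma$-integral on $[\sigma_0,R]$, is unproved, and your sketch cannot work as stated. A uniform bound on strip masses gives only $L^1$-boundedness in $\sigma$, not pointwise convergence or uniform integrability of the line averages. Worse, if $f$ has a simple zero $\rho$ with $\operatorname{Re}\rho=\sigma$, then $\Delta u(\sigma+it)\approx|f'(\rho)|/|t-\operatorname{Im}\rho|$, so $\int_{-T}^{T}\Delta u(\sigma+it)\,dt=+\infty$. Uniform almost periodicity on $[\sigma_0,R]$ therefore genuinely fails. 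The paper's proof of Lemma~\ref{Dipon-vasu-p2-lem-06} passes through this same interchange, citing uniform almost periodicity; your second-identity formulation is what lets you bypass it. The same objection applies to deducing convexity from ``$M''=\mu$''.

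The repair is simpler than what you propose. Your finite-$R$ identity already has $\lim_T$ outside both integrals, exactly as in \eqref{Dipon-vasu-p2-eqn-004}; what is missing is a tail bound uniform in $T$. Let $a_m$ be the first nonzero coefficient of $f$. Since $\sigma_a(f)\le\sigma_u(f)+\tfrac12\le\tfrac12$, for $\sigma\ge R_0$ you have $|f(\sigma+it)|\ge\tfrac12|a_m|m^{-\sigma}$. Moreover $|f'(\sigma+it)|\lesssim m^{-\sigma}$ if $m\ge2$, and $|f'(\sigma+it)|\lesssim2^{-\sigma}$ if $m=1$. Hence $\sup_t\Delta u(\sigma+it)\le\sup_t|f'|^2/|f|\lesssim e^{-c\sigma}$, and
\[
\sup_{T>0}\frac{1}{2T}\int_R^\infty\int_{-T}^T(\sigma-\sigma_0)\,\Delta u(\sigma+it)\,dt\,d\sigma\longrightarrow0\qquad(R\to\infty).
\]
Your observation $f'=O(2^{-R})$ alone does not suffice when $f(+\infty)=0$, because $|f|$ in the denominator decays as well. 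Combine this with $|(R-\sigma_0)\partial_\sigma u(R+it)|\le R\sup_t|f'(R+it)|$, which bounds both $\limsup_T$ and $\liminf_T$, so you do not even need the vertical mean of $\partial_\sigma u$. You then get that the inner $\lim_T$ exists and equals $M(\sigma_0)-\log(1+|f(+\infty)|)$.

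Monotonicity of $M$ then comes for free. For each $T$, the map $\sigma_0\mapsto\frac{1}{2T}\int_{\sigma_0}^\infty\int_{-T}^T(\sigma-\sigma_0)\Delta u\,dt\,d\sigma$ is nonincreasing and convex, hence so is its limit. The $\sigma_0\to0^+$ step then finishes as you describe.
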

The proof of Theorem \ref{Dipon-vasu-p2-thm-01} is based on the recent work of Brevig \textit{et al.} \cite{BrevigKouro-TAMS-2025}, not only provides a different perspective but also leads naturally to a few interesting questions. An observation due to Jessen \cite{Jessen-acta-1945} asserts that if a Dirichlet series \( f \) belongs to \( \mathcal{H}^{\infty} \), then for every $\sigma>0$, the limit
\begin{equation}\label{Dipon-vasu-p2-eqn-011}
	\mathcal{A}(f, \sigma) := \lim_{T\to\infty}\frac{1}{2T}\int_{-T}^{T}\log(1+|f(\sigma+it)|)\,dt
\end{equation}
exists. The existence of the limit in \eqref{Dipon-vasu-p2-eqn-011} also follows from \cite[Theorem 3.8]{Brevig-AdvMath-2021} relying on the fact that the function $w\mapsto\mathcal{A}(f,w)$ is convex and non-increasing. In analogy with the corresponding definition of the Nevanlinna class of Dirichlet series $\mathcal{N}_u$ with $\sigma_u(f)\le 0$, we define
\begin{equation*}
	\norm{f}_0:= \limsup_{\sigma \to 0^+}\mathcal{A}(f, \sigma).
\end{equation*}

In $2009$, Saksman and Seip \cite{Saksman-BLMS-2009} showed that, in general, the limits in $\sigma$ and $t$ appearing in Carlson's theorem \cite[Lemma 3.3]{Hedenmalm-Duke-1997} cannot be interchanged. Motivated by this observation, Brevig \textit{et al.} \cite{BrevigKouro-TAMS-2025} proved that such an interchange is permissible in the Littlewood–Paley formula for the Hardy spaces of Dirichlet series $\mathcal{H}^p$, for $1\le p<\infty.$ In light of these results, it is natural to investigate whether an analogous phenomenon holds in our setting, as described in Theorem \ref{Dipon-vasu-p2-thm-01}.

\begin{thm}\label{Dipon-vasu-p2-thm-04}
	Let the Dirichlet series $f$ belongs to $\mathcal{N}_u$ satisfying $\sigma_u(f)\le 0$ and $f\not\equiv 0$. Then
	\begin{align*}
		\small\lim_{T\to\infty}\bigg|\frac{1}{2T}\int_{-T}^{T}\log(1+|f(it)|)\,dt\, &-\, \log(1+|f(+\infty)|)\, \\&-\, \frac{1}{2T}\int_{0}^{\infty}\int_{-T}^{T}\frac{|f'(\sigma+it)|^2}{|f(\sigma+it)|(1+|f(\sigma+it)|)^2}\sigma\,dt\, d\sigma\bigg| = 0.
	\end{align*}
\end{thm}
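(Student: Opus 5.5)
Before starting, note that \(\sigma_u(f)\le 0\) only gives boundedness of \(f\) on each \(\mathbb{C}_\theta\) with \(\theta>0\). So \(f(it)\) in the statement has to be read as the almost periodic boundary function, in the same sense the paper uses in the introduction; this convention is part of what I have to justify, not something I get for free.

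\emph{Approach.} I would follow the scheme of Brevig \emph{et al.} \cite{BrevigKouro-TAMS-2025} for \(\mathcal{H}^p\). The idea is to prove a Littlewood--Paley identity on a bounded vertical strip, with boundary terms, for the subharmonic function \(u=\log(1+|f|)\). The key computation is
\[
\Delta u=\frac{|f'|^2}{|f|(1+|f|)^2},
\]
valid away from the zeros of \(f\); the zeros contribute no point masses because \(u\) is smooth wherever \(f\neq 0\) and \(\log(1+|w|)\) behaves like \(|w|\) near \(w=0\). Green's formula will be applied on the rectangle \([\sigma_0,R]\times[-T,T]\) against the weight \(\sigma-\sigma_0\). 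Then I let \(R\to\infty\), using that \(u(\sigma+it)\to\log(1+|f(+\infty)|)\) and \(\partial_\sigma u\to 0\) uniformly in \(t\). The result is the exact finite-\(T\) identity
\[
\frac{1}{2T}\int_{-T}^{T}u(\sigma_0+it)\,dt=\log(1+|f(+\infty)|)+\frac{1}{2T}\int_{\sigma_0}^{\infty}\int_{-T}^{T}\Delta u\,(\sigma-\sigma_0)\,dt\,d\sigma+E_T(\sigma_0).
\]
Here \(E_T(\sigma_0)\) collects the horizontal boundary integrals
\[
\frac{1}{2T}\int_{\sigma_0}^{\infty}(\sigma-\sigma_0)\bigl[\partial_t u(\sigma+iT)-\partial_t u(\sigma-iT)\bigr]\,d\sigma .
\]

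\emph{Key steps.}
\begin{enumerate}
\item Bound \(E_T\) uniformly in \(\sigma_0\). Integrating by parts in \(\sigma\) turns the \(\partial_t\)-terms into expressions controlled by integrals of \(|\nabla u|\) over the horizontal segments. I would estimate these using \(|\nabla u|\le |f'|/(1+|f|)\) and Cauchy estimates. Since \(\sigma_u(f)\le 0\), \(f\) is bounded on each \(\mathbb{C}_\theta\), and for \(\sigma\ge 1\) the derivative \(f'\) decays exponentially in \(\sigma\) uniformly in \(t\). This should make the part of \(E_T\) with \(\sigma\ge\delta\) of size \(O_\delta(1/T)\).
\item Show the strip \(0<\sigma<\delta\) contributes little. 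The horizontal boundary piece there is weighted by \(\sigma-\sigma_0\le\delta\). My plan is to average over \(T\in[T_0,2T_0]\) to convert line integrals into area integrals. Those area integrals are controlled by the mean of \(\Delta u\) with weight \(\sigma\) and by \(\|f\|_0\). Theorem \ref{Dipon-vasu-p2-thm-01} shows the weighted mean of \(\Delta u\) is finite, so the contribution is \(o(1)\) as \(\delta\to0\), uniformly in \(T\).
\item Pass \(\sigma_0\to 0^+\) at fixed \(T\). The area term converges by monotone convergence. The left side converges because \(u(\sigma_0+it)\to u(it)\): on a bounded \(t\)-interval \(f\) is continuous up to the line in the almost periodic sense, which yields \(L^1[-T,T]\) convergence. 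Together with steps 1--2, the quantity inside the absolute value in Theorem \ref{Dipon-vasu-p2-thm-04} is at most \(o_\delta(1)+O_\delta(1/T)\).
\item Let \(T\to\infty\), then \(\delta\to0\).
\end{enumerate}

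\emph{Main obstacle.} I expect step 2 to be the hardest: controlling the horizontal boundary terms near the imaginary axis uniformly in \(T\), with no \(H^p\)-type bounds available for \(|f|\). The Saksman--Seip phenomenon shows this uniformity cannot be taken for granted. I would first try to exploit that \(|\nabla u|\le 1\cdot\frac{|f'|}{1+|f|}\) is bounded by \(\sqrt{\Delta u}\cdot\sqrt{|f|}\). Combined with Cauchy--Schwarz and the bound \(\log(1+|f|)\ge\) const\(\cdot\min(|f|,1)\), this gives a Lusin-area-type control. If that fails, the fallback is the Bohr lift to \(\mathbb{T}^\infty\) and ergodic averaging along the Kronecker flow, where vertical means become torus integrals and the boundary terms vanish by invariance.
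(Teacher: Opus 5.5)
\textbf{The decisive gap is Step~3, together with the reading of $f(it)$ you adopt at the start.}

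Uniform almost periodicity holds only on $\overline{\mathbb{C}_\theta}$ with $\theta>0$. When $\sigma_u(f)=0$, $f$ need not be continuous up to $i\mathbb{R}$, and $\log(1+|f(\sigma_0+it)|)$ need not converge in $L^1[-T,T]$ to $\log(1+|f(it)|)$.

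Take the example following Theorem~\ref{Dipon-vasu-p2-thm-06}, $f(s)=\exp\bigl(\frac{1+2^{-s}}{1-2^{-s}}\bigr)\in\mathcal{N}_u$.
\begin{itemize}
\item It continues analytically across $i\mathbb{R}$ off the discrete set $\frac{2\pi}{\log 2}\mathbb{Z}$, with $|f(it)|=1$ there.
\item As $\sigma_0\to0^+$, $\log(1+|f(\sigma_0+it)|)\,dt$ converges locally weakly to $\log 2\,dt$ plus a point mass $2\pi/\log 2$ at each point of $\frac{2\pi}{\log 2}\mathbb{Z}$, because the real part of the exponent is a Poisson kernel.
\item Everything is $\frac{2\pi}{\log 2}$-periodic in $t$, so Green's formula over one period has no horizontal contribution.
\item Hence the three terms in the statement are $\log 2$ (for every $T$), $\log(1+e)$, and a quantity tending to $\|f\|_0-\log(1+e)=1+\log 2-\log(1+e)$.
\end{itemize}
The expression inside the absolute value therefore tends to $-1$. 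So with $f(it)$ read as a boundary function, the statement is false and no argument can establish it. It can only survive if the first term is defined as $\lim_{\sigma_0\to0^+}\frac1{2T}\int_{-T}^{T}\log(1+|f(\sigma_0+it)|)\,dt$, i.e.\ as a boundary measure. For general $f\in\mathcal{N}_u$, even the existence of that limit needs local control near $i\mathbb{R}$.

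\textbf{Even under that reading, Step~2 carries all the content and you do not supply it.} Your finite-$T$ identity for $\sigma_0>0$ is fine; the problem is controlling the horizontal terms near $i\mathbb{R}$ uniformly in $T$.
\begin{itemize}
\item $E_f$ and $\|f\|_0$ are $T\to\infty$ means and give no translation-uniform bounds near $i\mathbb{R}$. Such bounds are essentially the local embedding question the paper leaves open (Problem~\ref{Dipon-vasu-p2-problem-01}).
\item Your identity $|\nabla u|=\sqrt{|f|\,\Delta u}$ combined with Cauchy--Schwarz requires $\iint\sigma|f|$. This is not controlled by $\|f\|_0$, and it diverges near the singular points of the example.
\item Where $|f|$ is large, $u$ is nearly harmonic, so $\Delta u$ gives no information about $\nabla u$.
\item Averaging over $T\in[T_0,2T_0]$ controls at best a Ces\`aro mean of $E_T$, not the pointwise limit in $T$ that the statement asserts.
\item The ergodic fallback presupposes the very interchange of limits being proved.
\end{itemize}
Your scheme does work for $f\in\mathcal{H}^\infty$: there $|f'(\sigma+it)|\le\|f\|_\infty/\sigma$ makes the horizontal terms $O(1)$, and bounded convergence gives Step~3.

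\textbf{The paper's proof does not fill this gap either.} It passes to $f_\varepsilon(s)=f(s+\varepsilon)$ and bounds the change in the area term by $\int_0^\varepsilon\rho E_f(\rho)\,d\rho+\varepsilon\int_\varepsilon^\infty E_f(\rho)\,d\rho$. In doing so it interchanges $\lim_T$ with the $\rho$-integral near $0$. It also never estimates $\frac1{2T}\int_{-T}^{T}\bigl[\log(1+|f(it)|)-\log(1+|f(\varepsilon+it)|)\bigr]\,dt$ uniformly in $T$, and that is exactly the term producing the $-1$ above.
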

An analogous phenomenon holds for the mean counting function
\begin{equation}\label{Dipon-vasu-p2-eqn-016}
	M_f(w) = \lim_{\sigma_0\to 0^+}\lim_{T\to\infty}\frac{\pi}{T}\sum_{\substack{s\in f^{-1}(\{w\})\\|\imaginary(s)|<T\\\sigma_0<\real(s)<\infty}} (\real(s)-\sigma_0)
\end{equation}
exists for every $f\in\mathbb{C}\setminus\{f(+\infty)\}$ (see, for instance, \cite[Theorem 1.5]{BrevigKouro-TAMS-2025}). This mean counting function was first introduced by Brevig and perfekt \cite{Brevig-AdvMath-2021} as an analogue of the Nevanlinna counting function.\\

Now assume $\Phi$ be a holomorphic self-map of a domain $\Omega$ in the complex plane $\mathbb{C}.$ Then the composition $C_\Phi(f) = f\circ\Phi$ defines a composition operator $C_\Phi$ on the space of holomorphic function on $\Omega.$
In $1999$, Gordon and Hedenmalm \cite{Gordon-Michigan-1999} characterized those holomorphic functions $\Phi$ defined on a right half-plane $\mathbb{C}_\theta$ $(\theta\in\mathbb{R})$ that maps into $\mathbb{C}_{{1}/{2}}$, which generates a composition operator $C_\Phi$ on $\mathcal{D}$. Moreover, Gordon and Hedenmalm \cite{Gordon-Michigan-1999} also determined precisely for which symbol $\Phi$ generates a bounded composition operator on $\mathcal{H}^2$. They proved that the symbol $\Phi$ defined on the half-plane $\mathbb{C}_{{1}/{2}}$ generates a composition operator if, and only if,
\begin{equation*}
	\Phi(s) = c_0s+ \phi(s),
\end{equation*} where $c_0$ is a non-negative integer and $\phi(s)= \sum_{n=1}^{\infty}c_nn^{-s}$ converges uniformly in each half-plane $\mathbb{C}_\epsilon$, for every $\epsilon>0$, where $\phi$ satisfies the following properties:
\begin{itemize}
	\item[\textbf{(i)}] If $c_0=0$, then $\Phi(\mathbb{C}_0)= \phi(\mathbb{C}_0)\subset\mathbb{C}_{{1}/{2}}$.
	\item[\textbf{(ii)}] If $c_0\ge 1$, then either $\phi= 0$ or, simply $\phi(\mathbb{C}_0)\subset\mathbb{C}_0$.
\end{itemize}
The non-negative integer $c_0$ is called the characteristic of $\Phi$ or, simply \textit{{char} $\Phi$}. We shall denote $\mathcal{G}_0$ and $\mathcal{G}_{\ge 1}$ for the subclasses \textbf{(i)} and \textbf{(ii)} respectively. These two classes are collectively referred to as the Gordon--Hedenmalm class of symbols $\Phi$. A complete characterizations of the symbol $\Phi$ in the G\"{o}rdon-Hedenmalm class $\mathcal{G}_0$ that induce compact composition operators on $\mathcal{H}^2$ has been obtained by several authors (see for instance \cite{Bayart-monatsh-2002, Bayart-IJ-2003, Bayart-BLMS-2023, Athanosios-Revista-2024, Brevig-AdvMath-2021}). In contrast, the classification of symbols $\Phi \in \mathcal{G}_{\ge 1}$ that generates compact composition operators on $\mathcal{H}^2$ remains an active area of research. Moreover, establishing the necessary conditions for the compactness of $C_\Phi$ on $\mathcal{H}^2$, without assuming additional conditions, is still an open problem.\\

Motivated by the seminal work of Gordon and Hedenmalm \cite{Gordon-Michigan-1999} on composition operators on the Hardy space $\mathcal{H}^2$ of Dirichlet series, and its subsequent extension to the $\mathcal{H}^p$ setting by Bayart \cite{Bayart-monatsh-2002}, it is natural to investigate which analytic symbols $\Phi$ generates bounded composition operators $C_\Phi$ on the Dirichlet--Nevanlinna class $\mathcal{N}_u$.

\begin{thm}\label{Dipon-vasu-p2-thm-09}
	Let $\Phi : \mathbb{C}_1 \to \mathbb{C}_1$ be a symbol with char$(\Phi)=0$. 
	If $C_\Phi$ defines a bounded composition operator on $\mathcal{N}_u$ with $\sigma_u(f)\le 0$, then $\Phi$ extends holomorphically to a mapping from $\mathbb{C}_0$ into $\mathbb{C}_0$. Conversely, if $\Phi$ extends holomorphically to a mapping from $\mathbb{C}_0$ into $\mathbb{C}_0$, then $C_\Phi$ defines a bounded composition operator on $\mathcal{N}_u$ satisfying $\sigma_u(f)\le 0$.
\end{thm}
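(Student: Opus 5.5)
The proof splits into two directions, both resting on the Gordon--Hedenmalm structure of symbols of characteristic zero: $\Phi(s)=c_1+\sum_{n\ge2}c_n n^{-s}=\phi(s)$, a Dirichlet series.

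\emph{Necessity.} Assume $C_\Phi$ maps $\mathcal{N}_u$ boundedly into itself. We test it on the simple functions $f(s)=2^{-s}$ and $f(s)=3^{-s}$. Both lie in $\mathcal{H}^\infty\cap\mathcal{N}_u$ with $\sigma_u\le0$. Hence $g=2^{-\Phi}$ belongs to $\mathcal{N}_u$, so $\sigma_u(g)\le 0$, and $g$ is a bounded holomorphic function on each $\mathbb{C}_\theta$ with $\theta>0$ (Bohr). Since $g=e^{-\Phi\log 2}$ never vanishes on $\mathbb{C}_1$, it gives a holomorphic logarithm of $g$ on $\mathbb{C}_0$ that agrees with $-\Phi\log2$ on $\mathbb{C}_1$. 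This is the standard Gordon--Hedenmalm argument: $\Phi$ is then recovered as a Dirichlet series that converges, via Bohr's theorem on $\mathcal{H}^\infty$ and the Bohr lift, wherever $2^{-\Phi}$ does.

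To see that $\Phi$ extends holomorphically to $\mathbb{C}_0$, I will use the quotient $\log$ of $2^{-\Phi}$ and $3^{-\Phi}$ consistency: both $2^{-\Phi}$ and $3^{-\Phi}$ extend, and $|2^{-\Phi}|=e^{-\real\Phi\log2}$ extends continuously. Alternatively, and more cleanly, I would apply Hurwitz's theorem to the vertical translates. The extension $g$ on $\mathbb{C}_0$ is a uniform limit on compacta of nonvanishing functions (in the Bohr lift, $g_\chi=2^{-\Phi_\chi}$ for characters $\chi$), so $g$ has no zeros. Therefore $\Phi=-\log g/\log 2$ extends holomorphically, with the branch fixed on $\mathbb{C}_1$.

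Next, I need $\real\Phi\ge0$, i.e.\ $|g|\le1$ on $\mathbb{C}_0$. Here boundedness must be used quantitatively. Apply $C_\Phi$ to $f_k=2^{-ks}$, which satisfy $\|f_k\|_0=\log 2$ for every $k$. Boundedness (the $F$-space sense suffices if I rescale: $\|\epsilon f_k\|_0\to 0$ uniformly in $k$) gives
\[
\sup_k\|\epsilon\, g^k\|_0\le C .
\]
If $|g(s_0)|>1$ at some $s_0\in\mathbb{C}_0$, then subharmonicity and vertical almost periodicity give $\mathcal{A}(g^k,\sigma_0)\gtrsim k\,\delta$ for some $\delta>0$. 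The mean of $\log|g|$ on the line $\real s=\sigma_0$ is bounded below by a positive multiple via Jensen-type mean-value arguments applied to the Bohr lift. This forces $\|\epsilon g^k\|_0\to\infty$, a contradiction. So $|g|\le1$, i.e.\ $\real\Phi\ge0$. The open mapping theorem upgrades this to $\real\Phi>0$, since $\Phi$ is nonconstant, or constant with $\real c_1>0$ by the same test. \textbf{The main obstacle} is this quantitative lower bound for the mean of $\log^+|g|^k$ from a single point value; I would handle it through the Bohr lift, where $\log|g_\chi|$ is pluriharmonic and its mean over $\mathbb{T}^\infty$ equals $\log|g(+\infty)|$-type Poisson averages.

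\emph{Sufficiency.} Suppose $\Phi:\mathbb{C}_0\to\mathbb{C}_0$, and let $f\in\mathcal{N}_u$. Then $f\circ\Phi$ is a Dirichlet series (Gordon--Hedenmalm) with $\sigma_u\le0$ because $f$ is bounded on each $\mathbb{C}_\theta$ and $\Phi(\mathbb{C}_\epsilon)\subset\mathbb{C}_{\theta(\epsilon)}$ by almost periodicity. For the estimate, I would use Theorem \ref{Dipon-vasu-p2-thm-01} via the mean counting function. By the Stanton/Littlewood–Paley change of variables,
\[
\|f\circ\Phi\|_0=\log(1+|f(\Phi(+\infty))|)+\int_{\mathbb{C}_0}\frac{|f'(w)|^2}{|f(w)|(1+|f(w)|)^2}M_\Phi(w)\,dA(w),
\]
and the Littlewood subordination bound $M_\Phi(w)\le \real w$-type estimate (from \cite{Brevig-AdvMath-2021}) compared with the half-plane Green function. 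Alternatively, and more simply, $\log(1+|f|)$ is subharmonic, and its least harmonic majorant composed with $\Phi$ majorizes $\log(1+|f\circ\Phi|)$. Averaging vertically then yields $\|f\circ\Phi\|_0\le C_{\Phi}\,\|f\|_0$, and continuity at $0$ follows likewise.
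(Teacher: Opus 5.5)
The converse half has a gap that cannot be closed. The estimate $\|f\circ\Phi\|_0\le C_\Phi\|f\|_0$ that you aim for (as the paper does, via Remark~\ref{rem:composition-norm-estimate}) is false under the hypothesis $\Phi(\mathbb{C}_0)\subset\mathbb{C}_0$.

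\emph{Why the Stanton route fails.} Your first route is the paper's, and it breaks at the comparison step. Theorem~\ref{Dipon-vasu-p2-thm-01} expresses $\|f\|_0$ through a \emph{vertical mean} of $q_f(w)\operatorname{Re}w$, where $q_f=|f'|^2/(|f|(1+|f|)^2)$. The Stanton term, by contrast, is an unaveraged area integral over $\mathbb{C}_0$, and $\int_{\mathbb{C}_0}q_f\operatorname{Re}w\,dA=\infty$ for nonconstant $f$. The actual Littlewood bound $M_\Phi(w)\le\log|(\overline{w}+a)/(w-a)|$ is the Green function of $\mathbb{C}_0$ with pole at $a=\Phi(+\infty)$. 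So the comparison amounts to bounding the Poisson average $\sup_{\sigma>0}\int_{\mathbb{R}}\log(1+|f(\sigma+it)|)P_a(t)\,dt$ by $\|f\|_0$. That is a local embedding of exactly the kind left open in Problem~\ref{Dipon-vasu-p2-problem-01}. The paper hides this by treating $C(w,a)=2\operatorname{Re}(a)/|w-a|^2$ as a uniform constant.

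\emph{Why the harmonic-majorant route fails.} It presupposes the same local control: a positive harmonic $H\ge\log(1+|f|)$ on all of $\mathbb{C}_0$ already forces $\sup_{0<\sigma\le 1}\int_{-1}^{1}\log(1+|f(\sigma+it)|)\,dt<\infty$. Even granted such an $H$, nothing bounds the vertical means of $H\circ\Phi$ by those of $H$.

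\emph{Counterexample.} Let $\Phi(s)=3+\tfrac{29}{10}2^{-s}$. It has characteristic $0$, maps $\mathbb{C}_1$ into $\mathbb{C}_1$, and maps $\mathbb{C}_0$ into $\mathbb{D}(3,\tfrac{29}{10})\subset\mathbb{C}_{1/10}$. Let $f_N=\exp(h_N)\in\mathcal{H}^\infty$, where $h_N(s)=S_N^{-1}\sum_{p\le N}p^{-3/10}p^{-s}$ over primes $p$ and $S_N^2=\sum_{p\le N}p^{-3/5}$.
\begin{itemize}
\item Using $\log(1+e^x)\le\log 2+|x|$ and Cauchy--Schwarz on vertical means, $\|f_N\|_0\le\log 2+1$.
\item On the other hand, $\log(1+|f_N(3/10)|)\ge h_N(3/10)=S_N\to\infty$.
\item Since $f_N\circ\Phi$ depends only on $2^{-s}$, its vertical mean on $\operatorname{Re}s=\sigma$ is the mean of the subharmonic function $\log(1+|f_N|)$ over the circle $|w-3|=\tfrac{29}{10}2^{-\sigma}$.
\item The Poisson-kernel bound at the interior point $3/10$ then gives $\|f_N\circ\Phi\|_0\ge S_N/28\to\infty$.
\end{itemize}
Here $C_\Phi$ does map $\mathcal{N}_u$ into $\mathcal{H}^\infty$, since $\overline{\Phi(\mathbb{C}_0)}$ is compact in $\mathbb{C}_0$. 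But $\Phi(\mathbb{C}_0)\subset\mathbb{C}_0$ is not sufficient for $\|C_\Phi f\|_0\lesssim\|f\|_0$.

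For the necessity half, your route genuinely differs from the paper's. The paper tests $C_\Phi$ on the lacunary function of Example~\ref{Dipon-vasu-p2-ex-1}, which has natural boundary $i\mathbb{R}$, works with the vertical limits $\Phi_\chi$, and concludes via Gordon--Hedenmalm. This needs only that $C_\Phi$ maps $\mathcal{N}_u$ into itself. You use instead the uniform bound $\|2^{-k\Phi}\|_0\le K\log 2$. That is more elementary and adequate under the paper's Lipschitz notion of boundedness.

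Your ``main obstacle'' is easy to clear. Suppose $g$, the extension of $2^{-\Phi}$, has $|g(s_0)|>1$ with $\operatorname{Re}s_0=\sigma_0$.
\begin{itemize}
\item Then $\log(1+|g|^k)\ge k\log^+|g|$.
\item The function $t\mapsto\log^+|g(\sigma_0+it)|$ is nonnegative, uniformly almost periodic and not identically zero, so it has positive mean $\delta$.
\item Hence $\|g^k\|_0\ge k\delta\to\infty$.
\end{itemize}
The mean of $\log|g|$ is the wrong quantity, and no Jensen-type argument is needed.

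The real gap in this half is the non-vanishing of $g$ on $\mathbb{C}_0\setminus\mathbb{C}_1$; without it, $-\log g/\log 2$ does not define $\Phi$. Your Hurwitz argument is circular. The identity $g_\chi=2^{-\Phi_\chi}$ is known only on $\mathbb{C}_1$, and the vertical translates of $g$ are zero-free on $\mathbb{C}_0$ only if $g$ is.

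Your first idea does close the gap. Let $g_2,g_3$ be the extensions of $2^{-\Phi}$ and $3^{-\Phi}$. The function $\log 2\,\log|g_3|-\log 3\,\log|g_2|$ is harmonic off the discrete zero set and vanishes on $\mathbb{C}_1$, hence everywhere. So at any zero the orders satisfy $m_3\log 2=m_2\log 3$, which forces $m_2=m_3=0$ because $\log 3/\log 2$ is irrational. The paper's Proposition~\ref{Dipon-vasu-p2-prop-01} merely asserts this non-vanishing.
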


 The study of composition operators on the classical Nevanlinna class $N(\mathbb{D})$, defined on the unit disk $\mathbb{D} := \{z\in\mathbb{C} : |z| < 1\},$ was initiated by Masri in his thesis \cite{Masri-Thesis-1985}. Subsequently, Choa and Kim \cite{ChoaKim-PAMS-1997} proved that a composition operator $C_\Phi$ is compact on $N(\mathbb{D})$ if, and only if, it is compact on the classical Hardy space $\mathbb{H}^2(\mathbb{D}).$ In this paper, we develop a Carleson measure approach to determine the compactness of composition operators acting on the Nevanlinna class of Dirichlet series $\mathcal{N}_u$. Recall that on the classical Hardy spaces $H^p(\mathbb{D})$, compactness was characterized by Shapiro \cite{Shapiro-annals-1987} via the Nevanlinna counting function and shown to be independent of $p>0$. A key ingredient in this circle of ideas originates from  Carleson \cite{Carleson-Annals-1962}, who proved that the class of Carleson measures for $H^p(\mathbb{D})$ is identical for all $p > 0$. In the present setting, the relevant Carleson measures are precisely the push-forward measures generated by symbols $\Phi \in \mathcal{G}_0$. A positive Borel measure $\mu$ is a Carleson measure for the Hardy space of Dirichlet series $\mathcal{H}^p$ for $p\in [1,\infty)$, if there exists a constant $C>0$ such that for all $f\in\mathcal{H}^p$ it holds that
\begin{equation}\label{Dipon-vasu-p2-eqn-040}
	\int_{\mathbb{C}_{1/2}}|f(s)|^p\,d\mu(s) \le C\norm{f}^p_{\mathcal{H}^p}.
\end{equation}

The smallest possible $C$ in \eqref{Dipon-vasu-p2-eqn-040} is called the Carleson norm of $\mu$ with respect to $\mathcal{H}^p$, denoted by $\Vert{}\mu\Vert{}_{C,\mathcal{H}^p}$. If $\mu$ is not a Carleson measure for $\mathcal{H}^p$, we set $\Vert{}\mu\Vert{}_{C,\mathcal{H}^p} = \infty$. Carleson measures for the Nevanlinna class $\mathcal{N}_u$ are defined analogously, following the classical setting (see, for instance, \cite[Section 4]{ChoeKoo-JdeAnalyse-2008}). A precise discussion is provided in Section \ref{Dipon-vasu-p2-sec-2.4}. We now formulate geometric criteria for Carleson measures on $\mathcal{N}_u$ in terms of Carleson squares on the boundary of the half-plane $\mathbb{C}_0$. These geometric conditions will be central to characterize the boundedness and compactness of the composition operator $C_\Phi$.

\begin{thm}\label{Dipon-vasu-p2-thm-05}
	Let $\Phi$ be a Dirichlet series inducing a holomorphic self-map of $\mathbb{C}_0$ with ${char}(\Phi) = 0$, and suppose that $\overline{\Phi(\mathbb{C}_0)}$ is a compact subset of $\mathbb{C}_0$. Then the following assertions are equivalent:
	\begin{enumerate}
		\item[\rm (i)] The composition operator $C_\Phi$ is bounded on $\mathcal{N}_u$ with $\sigma_u(f)\le 0$ if, and only if, the push-forward measure $\mu_{\Phi^*}$ satisfies the geometric Carleson condition on $\mathbb{C}_0$.
		\item[\rm (ii)] The composition operator $C_\Phi$ is compact on $\mathcal{N}_u$ with $\sigma_u(f)\le 0$ if, and only if, the push-forward measure $\mu_{\Phi^*}$ satisfies the geometric vanishing Carleson condition on $\mathbb{C}_0$.
	\end{enumerate}
\end{thm}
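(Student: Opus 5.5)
We would prove both items by passing through the Carleson embedding characterization on $\mathcal{N}_u$ (Section \ref{Dipon-vasu-p2-sec-2.4}), in the form: \emph{$C_\Phi$ is bounded (resp.\ compact) on $\mathcal{N}_u$ if and only if $\mu_{\Phi^*}$ is a (resp.\ vanishing) Carleson measure for $\mathcal{N}_u$, if and only if $\mu_{\Phi^*}$ satisfies the geometric (resp.\ vanishing) Carleson condition on Carleson squares $Q(t_0,h)=\{\sigma+it:0<\sigma<h,\ |t-t_0|<h/2\}$}. The link between $C_\Phi$ and the measure comes from the Bohr lift. Since char$(\Phi)=0$, $\Phi$ is a Dirichlet series and has boundary values $\Phi^*$ almost everywhere on the infinite-dimensional torus $\mathbb{T}^\infty$. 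For $f\in\mathcal{N}_u$, the definition \eqref{Dipon-vasu-p2-eqn-002} together with Theorem \ref{Dipon-vasu-p2-thm-04}, which justifies exchanging the $\sigma$ and $T$ limits, gives
\[
\norm{C_\Phi f}_0=\int_{\mathbb{T}^\infty}\log\bigl(1+|f(\Phi^*(\chi))|\bigr)\,dm_\infty(\chi)=\int_{\overline{\mathbb{C}_0}}\log(1+|f|)\,d\mu_{\Phi^*},
\]
where $\mu_{\Phi^*}=m_\infty\circ(\Phi^*)^{-1}$. So boundedness of $C_\Phi$ is exactly the Carleson embedding inequality $\int\log(1+|f|)\,d\mu_{\Phi^*}\le C\,(\text{function of }\norm{f}_0)$, in the $F$-space sense. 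Compactness is the statement that bounded sets in $\mathcal{N}_u$ converging locally uniformly to $0$ are mapped to $\norm{\cdot}_0$-null sequences.

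For the direction from geometric condition to embedding, we would use the hypothesis that $\overline{\Phi(\mathbb{C}_0)}$ is compact in $\mathbb{C}_0$. This forces $\mu_{\Phi^*}$ to be supported in a fixed compact set $K=\{\delta\le\real s\le R,\ |\imaginary s|\le R\}$. On $K$ every $f\in\mathcal{N}_u$ obeys a pointwise estimate $\log(1+|f(s)|)\lesssim_\delta \norm{f}_0$. This estimate follows from subharmonicity of $\log(1+|f|)$ and the Poisson/Littlewood--Paley representation of Theorem \ref{Dipon-vasu-p2-thm-01}, which dominates $\log(1+|f(\sigma+it)|)$ by a Poisson average over vertical limits. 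Integrating against the finite measure $\mu_{\Phi^*}$ gives the embedding and hence boundedness. For compactness, a bounded sequence tending to $0$ locally uniformly tends to $0$ uniformly on $K$, so $\int\log(1+|f_n|)\,d\mu_{\Phi^*}\to0$ by dominated convergence. In this regime the geometric (vanishing) Carleson condition on squares of side $h<\delta$ is automatic, since those squares miss the support.

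For the converse directions, we would test the embedding against translates of the functions $f_{w}(s)=\exp\bigl(c\,\zeta$-type kernels$\bigr)$ or, more simply, reproducing-kernel type Dirichlet series $k_w$ normalized in $\mathcal{H}^2$. Using $\log^+ t\le t^p/p$, they satisfy $\norm{\log(1+|k_w|)}$-control, and on $Q(\imaginary w,\real w)$ they satisfy $\log(1+|k_w|)\gtrsim1$. This yields $\mu_{\Phi^*}(Q)\lesssim h$, and in the vanishing case $o(h)$ as $h\to0$.

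The main obstacle is the equivalence of the embedding with $\norm{C_\Phi f}_0$. One has to justify that the $\limsup_{\sigma\to0^+}$ in \eqref{Dipon-vasu-p2-eqn-002} applied to $f\circ\Phi$ equals the boundary integral on $\mathbb{T}^\infty$, despite the non-convexity of $\mathcal{N}_u$ and the Saksman--Seip caveat. For this step we would first try Theorem \ref{Dipon-vasu-p2-thm-04} combined with monotonicity and convexity of $\sigma\mapsto\mathcal{A}(f\circ\Phi,\sigma)$.
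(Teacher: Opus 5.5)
\textbf{There is a genuine gap, and it cannot be closed: the direction ``geometric condition $\Rightarrow$ bounded/compact'' is false under the stated hypotheses.}

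\textbf{The failing step.} Your argument rests on the pointwise bound $\log(1+|f(s)|)\lesssim_\delta\norm{f}_0$ on $K=\overline{\Phi(\mathbb{C}_0)}$. The paper has such a bound only on $\mathbb{C}_1$: this is \eqref{Dipon-vasu-p2-eqn-039}, which comes from the $\ell^1$ Poisson kernel at $\zeta=(p_j^{-s})_{j\ge1}$. The bound is false as soon as $K$ meets $\{\operatorname{Re} s\le 1/2\}$, which the hypothesis allows. For $0<\operatorname{Re} w\le 1/2$ put $S_N=\sum_{n\le N}n^{-2\operatorname{Re} w}$ and $f_N(s)=\epsilon S_N^{-1/2}\sum_{n\le N}n^{-\overline{w}}n^{-s}$. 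Since $\log(1+x)\le x$, Cauchy--Schwarz gives $\norm{f_N}_0\le\norm{f_N}_{\mathcal{H}^2}=\epsilon$, whereas $f_N(w)=\epsilon\sqrt{S_N}\to\infty$.

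Your justification cannot give a pointwise bound. Integrating half-plane Poisson majorants of $\log(1+|f_\chi|)$ over $\chi$, as in Lemma \ref{Dipon-vasu-p2-lem-05}, controls only $\int_{\mathbb{T}^\infty}\log(1+|f_\chi(s)|)\,dm_\infty(\chi)=\mathcal{A}(f,\operatorname{Re}s)\le\norm{f}_0$. That is an average over characters and says nothing at the trivial character, which is a null set. The paper takes a different route here: the harmonic majorant of Proposition \ref{Dipon-vasu-p2-prop-02} plus a Carleson embedding for positive harmonic functions, in the proof of Theorem \ref{Dipon-vasu-p2-thm-010}.

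The identity $\norm{C_\Phi f}_0=\int\log(1+|f|)\,d\mu_{\Phi^*}$, which you single out as the main obstacle, is actually the easy part. Since $f$ is bounded and continuous on $K$, dominated convergence in $\mathcal{A}(f\circ\Phi,\sigma)=\int_{\mathbb{T}^\infty}\log(1+|f(\Phi_\chi(\sigma))|)\,dm_\infty(\chi)$ yields it (compare Lemma \ref{Dipon-vasu-p2-lem-08}). Theorem \ref{Dipon-vasu-p2-thm-04} plays no role.

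\textbf{Why no route works.} Let $\Phi(s)=2+\frac{19}{10}2^{-s}$. It has characteristic $0$ and maps $\mathbb{C}_1$ into $\mathbb{C}_1$. Also $\overline{\Phi(\mathbb{C}_0)}=\overline{\mathbb{D}(2,19/10)}$ is a compact subset of $\mathbb{C}_0$, so both geometric conditions hold trivially (Remark \ref{Dipon-vasu-p2-rem-01}). Since $C_\Phi f(s)$ depends only on $2^{-s}$, $\norm{C_\Phi f}_0$ is the mean of the nonnegative subharmonic function $\log(1+|f|)$ over the circle $|s-2|=19/10$. Harnack's inequality at the interior point $3/10$ then gives $\norm{C_\Phi f}_0\ge\frac{1}{18}\log(1+|f(3/10)|)$. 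Take $w=3/10$ and $\epsilon=S_N^{-1/4}$ in the construction above. Then $\norm{f_N}_0\to0$ while $\norm{C_\Phi f_N}_0\ge\frac{1}{18}\log(1+S_N^{1/4})\to\infty$. So $C_\Phi$ is not even continuous, let alone bounded or compact.

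Consequently no proof of this direction can succeed. In the paper's route, for instance, the Carleson inequality with vertical means fails already for $H(s)=\operatorname{Re}(1/s)$, whose vertical means vanish. The same $f_N$ show that $\norm{\cdot}_0$-bounded sets are not normal families on $\mathbb{C}_0$. Hence the compactness criterion you invoke (Lemma \ref{Dipon-vasu-p2-lem-02}) is not available either.

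\textbf{What your approach does prove.} It works for the corrected statement in which $\overline{\Phi(\mathbb{C}_0)}$ is a compact subset of $\mathbb{C}_1$. Then $K\subset\mathbb{C}_{1+\eta}$ for some $\eta>0$, and \eqref{Dipon-vasu-p2-eqn-039} holds uniformly there. This gives boundedness, and, via normality of bounded sets near $K$, compactness. The geometric conditions are automatic in that setting, so your test-function converse is unnecessary. It would not work anyway, since $\mathcal{H}^2$ reproducing kernels do not exist for $\operatorname{Re}w\le 1/2$.
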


We refer to Section~\ref{Dipon-vasu-p2-sec-2.4} for the precise geometric formulations of the Carleson and vanishing Carleson conditions on $\mathbb{C}_0$.
The proof of Theorem \ref{Dipon-vasu-p2-thm-05} relies on the original technique introduced by Carleson \cite{Carleson-Annals-1962}, which has since been adapted and extended by numerous authors. Under the standing assumption
$\overline{\Phi(\mathbb C_0)}$ is a compact subset of $\mathbb C_0$, the measure $\mu_{\Phi^*}$ is automatically vanishing Carleson. Hence using the
vanishing version of the Carleson embedding theorem, assertion~(ii)
follows from the proof of~(i).\\[0.5mm]

The classical Nevanlinna class \( N(\mathbb{D}) \) has been extensively studied by several authors (see for instance \cite{ChoaKim-PAMS-1997, Masri-Thesis-1985, ShapiroShields-Amer-1975}). In contrast, comparatively less attention has been devoted to its Dirichlet series analogue $\mathcal{N}_u$. Building on the perspective of Shapiro and Shields  \cite{ShapiroShields-Amer-1975}, the Nevanlinna class of Dirichlet series \(\mathcal{N}_u\), exhibits a distinctive topological property that sets it apart from the Hardy spaces of Dirichlet series \(\mathcal{H}^p\). While \(\mathcal{N}_u\) forms a topological group under addition, it fails to be a topological linear space, as scalar multiplication is not continuous. This fact is evident from the characteristics of the class \(N(\mathbb{D}^{\infty}_1)\) described by Guo \textit{et al.} \cite{GuoZhou-AnnalFourier-2025}, where $\mathbb{D}^{\infty}_1 := \ell^1 \cap \mathbb{D}^\infty$ is a domain in the Banach space $\ell^1$ of absolutely summable sequences. In the one-variable setting, Shapiro and Shields \cite{ShapiroShields-Amer-1975} demonstrated that the classical Nevanlinna class $N(\mathbb{D})$ is disconnected. In light of this finding, we establish that our class $\mathcal{N}$ as well as $\mathcal{N}_u$ is likewise disconnected. Our approach is based on the structure of the class $N(\mathbb{D}^{\infty}_1)$, along with the isometric correspondence between $\mathcal{N}$ and $N(\mathbb{D}^{\infty}_1)$ provided by Bohr's transform.\\[0.5mm]



	The purpose of this paper is to develop a composition operator theory for the Nevanlinna class of Dirichlet series. While composition operators have been extensively studied on Hardy and Bergman spaces of Dirichlet series, the Nevanlinna class is governed by a fundamentally different logarithmic mean functional rather than an $L^p$-type norm. In particular, the natural gauge of the Nevanlinna class of Dirichlet series is not induced by a Banach norm, and the corresponding boundary theory is governed by logarithmic rather than power integrability. We first establish a Hardy-Stein-type identity for the $\mathcal{N}_u$ gauge, expressing its logarithmic size in terms of a weighted Dirichlet energy. We then prove a complete characterisation of bounded composition operators generated by characteristic-zero Dirichlet series symbols: boundedness is equivalent to the holomorphic extension of the symbol to a self-map of the boundary half-plane. The necessity is obtained via a natural-boundary argument, while sufficiency follows from a logarithmic composition estimate. Our second main result identifies the associated boundary pushforward measure as the natural Carleson measure governing composition. This yields a necessary and sufficient Carleson embedding criterion for boundedness and, in turn, a vanishing Carleson characterisation of compactness. Thus, this paper establishes a three-way correspondence between the boundary distribution of the symbol, logarithmic Carleson embeddings, and composition operators on the Nevanlinna class of Dirichlet series.\\[0.5mm]

The organization of this paper as follows. Section~\ref{Dipon-vasu-p2-sec-2} contains essential preliminaries and background material. Section~\ref{Dipon-vasu-p2-sec-3} begins by demonstrating that $\mathcal{N}$ is disconnected. We then establish a Littlewood–Paley type identity and its counterpart via vertical limit functions, proving that the interchange of limits is permissible on $\mathcal{N}_u$. We also present an alternative, potential-theoretic approach to establish Theorem~\ref{Dipon-vasu-p2-thm-01}. In Section~\ref{Dipon-vasu-p2-sec-4}, we prove Theorem \ref{Dipon-vasu-p2-thm-09} that characterizes the class of symbols $\Phi$ that induce a bounded composition operator $C_\Phi$ on $\mathcal{N}_u$. In particular, we prove Theorem~\ref{Dipon-vasu-p2-thm-05} via the equivalence between the (vanishing) Carleson embedding condition and the geometric (vanishing) Carleson measure condition on Carleson squares. Section~\ref{Dipon-vasu-p2-sec-5} concludes with two applications of the Nevanlinna class $\mathcal{N}_u$: the first establishes a connection with hyperbolic spherical derivatives, while the second investigates the invertible elements of $\mathcal{N}_u$.

	\vspace{2mm} 
	\paragraph{\textbf{Notation}} Throughout this article, $C$ denotes a positive constant which may vary from line to line. When necessary, we write that $C=C(x)$ to indicate that the constant depends on a parameter $x.$ If $f$ and $g$ are two real functions defined on the same set $S$, the notation $f \asymp g$ means that there exists a constant $C\ge 1$ such that $C^{-1}f(x) \le g(x) \le Cf(x)$, for all $x$, while $f(x)\lesssim g(x)$ signifies that there exists a constant $C>0$ such that $f(x)\le Cg(x)$, for all $x$. Finally, for $w\in\mathbb{C}$ and $r>0$, we denote by  $\mathbb{D}(w,r):=\{\xi\in\mathbb{C}: |\xi-w|<r\}$ the open disk centered at $w$ and radius $r$.

\section{Preliminaries}\label{Dipon-vasu-p2-sec-2}

 Hardy spaces in infinitely many variables are closely related to spaces formed by Dirichlet series. In recent years, the function theory in infinitely many variables has received significant attention (see for instance\cite{Aleman-IMRN-2013, Gamelin-PLMS-1986, GuoZhou-AnnalFourier-2025}). 
 \subsection{Hardy space of infinite polytorus and vertical limits} For \(0 < p < \infty\), the Hardy space of the infinite polytorus, denoted by \(\mathbb{H}^p(\mathbb{T}^{\infty})\), is defined  as the closure of all polynomials \(\mathcal{P}_\infty\) in \(L^p(\mathbb{T}^{\infty})\). Here \(\mathbb{T}^\infty = \{\chi = (\chi_1, \chi_2, \ldots): |\chi_i| = 1 \text{ for } i \in \mathbb{N}\}\) denotes the countable infinite Cartesian product of the unit circle \(\mathbb{T}\), equipped with the normalized Haar measure \(m_\infty\) of infinite polytorus with the countable infinite product measure $m \times m \times \cdots,$ where $m$ is the normalized Lebesgue measure of the unit circle. A polynomial, in this context, is understood to be an analytic polynomial depending on finitely many complex variables. For further properties of $\mathbb{H}^p(\mathbb{T}^{\infty})$, we refer to \cite{Bayart-monatsh-2002, Gamelin-PLMS-1986, GuoZhou-AnnalFourier-2025}. Similarly, the Hardy space of Dirichlet series $\mathcal{H}^p$ (for $p > 0$) is defined to be the completion of the set of Dirichlet polynomials $\mathcal{P}_D$ under the norm $\Vert{}\cdot\Vert{}_{\mathcal{H}^p}$, given by
 \begin{equation*}
 	\norm{P}_{\mathcal{H}^p}^p = \lim_{T\to\infty}\frac{1}{2T}\int_{-T}^{T}|P(it)|^p\,dt,
 \end{equation*}
 where $P(s) = \sum_{n=1}^{N}a_n n^{-s}$ is a Dirichlet polynomial.
    Bohr's \cite{Bohr-Acta-1913} vision identifies $\mathcal{H}^p$ with the Hardy space of infinite polytorus $\mathbb{H}^p(\mathbb{T}^{\infty})$. More precisely, the Bohr's correspondence $\mathcal{B}$ defines an algebraic isomorphism from $\mathcal{P}_D$ onto the space of analytic polynomials $\mathcal{P}_\infty$. Furthermore, by the Birkhoff-Oxtoby theorem \cite{Queffelec-TRM-2020}, for every  $P\in\mathcal{P}_D$, we obtain
    \begin{equation*}
    	\norm{P}^p_{\mathcal{H}^p}= \int_{\mathbb{T}^{\infty}}|\mathcal{B}P|^pdm_\infty.
    \end{equation*}
    Consequently, the Bohr's correspondence extends uniquely to an isometric isomorphism from $\mathcal{H}^p$ onto $\mathbb{H}^p(\mathbb{T}^{\infty})$. \\[0.5mm]
    
    For $\tau\in\mathbb{R}$, let $T_\tau$ denote the vertical translations $T_\tau f(s) = f(s+i\tau).$
    By the uniqueness of prime factorization, the infinite-dimensional torus $\mathbb{T}^\infty$ can be identified with the group of characters on the group $(\mathbb{Q}_+, \cdot)$. For a point $\chi = (\chi_1, \chi_2, \ldots) \in \mathbb{T}^\infty$, the associated character $\chi : \mathbb{Q}_+ \to \mathbb{T}$
    is defined as the completely multiplicative function on $\mathbb{N}$ satisfying
    $\chi(p_j) = \chi_j,$ where $\{p_j\}_{j \ge 1}$ denotes the increasing sequence of prime numbers. This definition extends naturally to all of $\mathbb{Q}_+$ by imposing the relation $\chi(n^{-1}) = (\chi(n))^{-1}.$ For a Dirichlet series $f(s) =\sum_{n\ge1} a_nn^{-s}$ and a character $\chi$, the vertical limit function $f_\chi$ is defined as, $f_\chi(s) = \sum_{n\ge1} a_n\chi(n)n^{-s}$. The vertical translations $T_\tau(s)$ corresponds to the vertical limit function $f_\chi$ of the characters $\chi(n) = n^{-i\tau}.$
    The vertical limit functions $f_\chi$ are precisely the functions obtained by uniform limits $$f_\chi(s) = \lim_{j\to\infty} T_{\tau_j}f(s) = \lim_{j\to\infty} f(s+i\tau_j)$$ in $\overline{\mathbb{C}_\theta}$, where $(\tau_j)_{j\ge 1}$ is a sequence of real numbers.
     As noted by Bayart \cite{Bayart-monatsh-2002}, examining $f_\chi$ provides a clearer understanding of the function-theoretic behavior of $f$. In particular, for almost every $\chi$, $f_\chi$ can be analytically continued to the half-plane $\mathbb{C}_0$. 

    \subsection{Dirichlet series analogue of Nevanlinna class}  In connection with composition operators on Dirichlet series spaces, Brevig and Perfekt \cite{Brevig-AdvMath-2021} introduced the notion of the Nevanlinna class for a Dirichlet series $f$ defined in the definition \eqref{Dipon-vasu-p2-defn-02}. Motivated by this Guo \textit{et al.} \cite{GuoZhou-AnnalFourier-2025} defined the class $\mathcal{N}_u$ consists of all Dirichlet series $f$ with \( \sigma_u(f) \le 0 \) and $$ \limsup_{\sigma \to 0^+} \| f_\sigma \|_0 < \infty,$$ where \( f_\sigma(s) = f(s+\sigma) \) for $\sigma\in\mathbb{R}.$ Moreover, they proved for a Dirichlet series $f$ with $\sigma_u(f)\le 0$, $\norm{f_\sigma}_0$ defines a non-increasing function of $\sigma$. Hence we can write
    \begin{equation*}
    	\lim_{{\sigma}\to 0^+}\norm{f_\sigma}_0 = \sup_{\sigma>0}\norm{f_\sigma}_0.
    \end{equation*}
      For $f$ on the dense subset $\mathcal{N}_u,$ they introduced the norm of a Dirichlet series $f$ in the class $\mathcal{N}_u$ as 
		 \begin{equation}\label{Dipon-vasu-p2-eqn-006}
		 	\norm{f}_0 = \sup_{\sigma>0}\norm{f_\sigma}_0.
		 \end{equation}
		 It defines a metric on the dense subset $\mathcal{N}_u$ given by
		 \begin{equation*}
		 	d_0(f,g) = \norm{f - g}_0,\,\,\,\mbox{for}\,\, f,g\in\mathcal{N}_u.
		 \end{equation*}
		 The norm defined in \eqref{Dipon-vasu-p2-eqn-006} coincides with the definition \eqref{Dipon-vasu-p2-defn-02} introduced by Brevig and Perfekt \cite{Brevig-AdvMath-2021}. But the Nevanlinna-Dirichlet class of Dirichlet series \( \mathcal{N} \) is defined as the completion of \( \mathcal{N}_u \) with respect to the metric $d_0$, that is $$\mathcal{N} = \overline{\mathcal{N}_u}^{d_0}.$$ Thus 
		 \begin{equation*}
		 	d_\mathcal{N}(f,g) = d_0(f,g),\,\,\,\mbox{for all}\,\,\, f, g\in\mathcal{N}_u.
		 \end{equation*}
		 In conclusion, for any $0<p<q<\infty,$ the following inclusions hold true
		 \begin{equation*}
		 	\mathcal{H}^\infty\subset\mathcal{H}^q\subset\mathcal{H}^p\subset\mathcal{N}_u.
		 \end{equation*}
		 
      \subsection{Nevanlinna class in infinitely many variables} Recently, Guo \textit{et al.} \cite{GuoZhou-AnnalFourier-2025}  explored the Nevanlinna class in the setting of the infinite-dimensional polydisk.
      The Bohr lift $\mathcal{B}f$ of a Dirichlet series $f(s) = \sum_{n=1}^{\infty}a_nn^{-s}$ lies in the infinite polydisk algebra $A(\mathbb{T}^{\infty})$, defined as the norm-closure of the algebra of analytic polynomials $\mathcal{P}_{\infty}$ in $C(\mathbb{T}^{\infty})$, the Banach space of continuous functions on $\mathbb{T}^{\infty}$ (see for instance \cite{Gamelin-PLMS-1986}). By applying Birkhoff-Oxtoby theorem \cite{Queffelec-TRM-2020} yields
		\begin{equation}\label{Dipon-vasu-p2-eqn-003}
			\norm{f}_{0}= \int_{\mathbb{T}^{\infty}}\log(1+|\mathcal{B}f|)\,dm_\infty = \norm{\mathcal{B}f}_{0}.
		\end{equation}
		 An observation follows from \cite{GuoZhou-AnnalFourier-2025} is that, if $f$ is a complex measurable functions on $\mathbb{T}^\infty$, then $\exp({\int_{\mathbb{T}^{\infty}}\log|f|\,dm_\infty})$ is finite if, and only if, $\exp({\int_{\mathbb{T}^{\infty}}\log(1+|f|)\,dm_\infty})$ is finite. It is well-known that, if $f\in L^r(\mathbb{T}^\infty)$ for some $0<r\le\infty$, then as $p\to 0^+$
		\begin{equation*}
			\norm{f}_p = \bigg({\int_{\mathbb{T}^{\infty}}|f|^p\, dm_\infty}\bigg)^{\frac{1}{p}}\to \exp\bigg({\int_{\mathbb{T}^{\infty}}\log|f|\,dm_\infty}\bigg).
		\end{equation*}
		Hence the integral $\int_{\mathbb{T}^{\infty}}\log(1+|f|)\,dm_\infty$ exists. Guo \textit{et al.} \cite{GuoZhou-AnnalFourier-2025} established via Bohr's correspondence that the Nevanlinna-Dirichlet class \( \mathcal{N} \) is isometrically isomorphic to the Nevanlinna class in infinitely many variables $N(\mathbb{D}^{\infty}_1)$. Nevertheless, one may introduce a broader class of Dirichlet series of Nevanlinna type using \textit{der m-te Abschnitt}(truncation up to $m$-th variable), which is more naturally adapted to the $\mathcal{H}^p$ framework (see for instance \cite[Section 4]{Brevig-AdvMath-2021}).

		For each $0<r<1$, the function $F_{[r]}$ over the infinite polytorus $\mathbb{T}^\infty$ is defined as	$F_{[r]}(\chi) = F(r\chi_1, r^2\chi_2,\cdots,r^n\chi_n,\cdots),$
		where $\chi\in\mathbb{T}^{\infty}.$ By analogy with Fatou's radial limit theorem \cite{Duren-H^p space-1970, Shapiro-Book-1992}, every non-zero function in $N(\mathbb{D}^\infty_1)$ admits a radial limit.
		
		\begin{thm}\cite{GuoZhou-AnnalFourier-2025}\label{Dipon-vasu-p2-thm-07}
			Let $F \in N(\mathbb{D}^\infty_1)$ be a non-zero function. Then for almost every $\chi \in \mathbb{T}^{\infty}$, the radial limit
			\[
			F^*(\chi) = \lim_{r \to 1^-} F_{[r]}(\chi)
			\]
			exists. Moreover, the boundary values satisfy $\log|F^*| \in L^1(\mathbb{T}^\infty)$.
		\end{thm}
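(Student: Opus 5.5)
The plan is to reduce the statement to the finite-dimensional case, where it is a radial Fatou theorem for Nevanlinna functions on polydisks, and then pass to the limit using the $\ell^1$ weighting of the radii. Take $F\in N(\mathbb{D}^\infty_1)$, $F\not\equiv 0$. For $0<r<1$ the point $(r\chi_1,r^2\chi_2,\dots)$ lies in $\mathbb{D}^\infty_1$, since $\sum r^j<\infty$. So $F_{[r]}$ is a continuous function on $\mathbb{T}^\infty$, and the Nevanlinna condition reads $\sup_{r<1}\int_{\mathbb{T}^\infty}\log^+|F_{[r]}|\,dm_\infty<\infty$. It is convenient to introduce the one-variable slice functions
\[
g_\chi(z)=F(z\chi_1,z^2\chi_2,\dots,z^n\chi_n,\dots),\qquad z\in\mathbb{D},
\]
which are holomorphic on $\mathbb{D}$ because $z\mapsto(z^j\chi_j)_j$ is a holomorphic map of $\mathbb{D}$ into $\mathbb{D}^\infty_1$. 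Then $F_{[r]}(\chi)=g_\chi(r)$, and the radial limit in the statement is exactly the radial limit of $g_\chi$ at the boundary point $1$.

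First, show that for $m_\infty$-a.e.\ $\chi$ the slice $g_\chi$ belongs to the classical Nevanlinna class $N(\mathbb{D})$. The Haar measure is invariant under the rotation $\chi\mapsto(e^{ij\theta}\chi_j)_j$, and $g_\chi(re^{i\theta})=F_{[r]}(e^{i\theta}\chi,e^{2i\theta}\chi_2,\dots)$. Applying Fubini gives
\[
\int_{\mathbb{T}^\infty}\int_0^{2\pi}\log^+|g_\chi(re^{i\theta})|\frac{d\theta}{2\pi}\,dm_\infty(\chi)=\int_{\mathbb{T}^\infty}\log^+|F_{[r]}|\,dm_\infty\le C.
\]
The inner integral increases with $r$, because $\log^+|g|$ is subharmonic. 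Monotone convergence then gives $\sup_r\int\log^+|g_\chi(re^{i\theta})|\,d\theta<\infty$ for a.e.\ $\chi$. By the classical Fatou--Nevanlinna theorem, $g_\chi$ therefore has radial limits at a.e.\ $e^{i\theta}$. Fubini again shows that for a.e.\ $\chi$ the limit $\lim_r g_\chi(r)$ exists: the set of pairs $(\chi,\theta)$ where the limit fails is null, and its slices are rotated copies of the set $E=\{\chi:\lim_r F_{[r]}(\chi)\text{ fails}\}$. Since $m_\infty(E)$ is then an average of null slice measures, $E$ is null.

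Second, prove integrability of $\log|F^*|$. The positive part follows from Fatou's lemma: $\int\log^+|F^*|\le\liminf_r\int\log^+|F_{[r]}|<\infty$. For the negative part, subharmonicity of $\log|g_\chi|$ gives, after averaging over $\chi$ and $\theta$,
\[
\int\log|F_{[r]}|\,dm_\infty\ge\int\log|F_{[r_0]}|\,dm_\infty>-\infty
\]
for $r\ge r_0$. The right side is finite provided $F_{[r_0]}\not\equiv0$. This holds because $F\not\equiv0$, and the slices cover a dense set, so some $r_0$ works; $\log|F_{[r_0]}|$ is then integrable, being $\log$ of a nonzero function holomorphic in finitely many relevant variables up to uniform approximation, which follows from the known result that $\log|P|\in L^1$ for nonzero polynomials together with the Jensen-type inequality.

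Thus $\int\log^-|F_{[r]}|\le C'$ uniformly in $r$, and Fatou's lemma applied to $\log^-|F_{[r]}|\to\log^-|F^*|$ a.e.\ gives $\log^-|F^*|\in L^1$.

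The main obstacle is the finiteness of $\int\log|F_{[r_0]}|\,dm_\infty$ for some fixed $r_0$, that is, ruling out $\log|F_{[r_0]}|$ being non-integrable on the infinite torus. I would first try to handle this with Jensen's inequality on each one-variable slice, $\log|g_\chi(0)|\le\int\log|g_\chi(r_0e^{i\theta})|\,d\theta/2\pi$. If $F(0)=0$, I would instead factor out the lowest-order term of a suitable slice. The fallback is to approximate by truncations $F(\chi_1,\dots,\chi_n,0,\dots)$ and use upper semicontinuity.
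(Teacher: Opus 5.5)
The paper does not prove this statement; it quotes it from Guo et al. So I am assessing your argument on its own terms.

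\textbf{What works.} The existence of radial limits is handled correctly: the slices $g_\chi$, the rotation $R_\theta\chi=(e^{ij\theta}\chi_j)_j$, monotone convergence and Fubini give $g_\chi\in N(\mathbb{D})$ for a.e.\ $\chi$, hence $F^*$ exists a.e. Fatou's lemma then gives $\log^+|F^*|\in L^1$. The monotonicity of $r\mapsto\int\log|F_{[r]}|\,dm_\infty$ is also correct.

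\textbf{The gap.} It is the step you flag yourself: you never establish $\int_{\mathbb{T}^\infty}\log|F_{[r_0]}|\,dm_\infty>-\infty$ for any $r_0$, and the justification you offer does not work.
\begin{itemize}
\item $F_{[r_0]}$ depends on infinitely many variables, and a nonzero continuous function on $\mathbb{T}^\infty$ need not have integrable logarithm.
\item Uniform approximation by polynomials $P_n$ only gives $\int\log|F_{[r_0]}|\,dm_\infty\ge\limsup_n\int\log|P_n|\,dm_\infty$, by reverse Fatou (the $\log|P_n|$ are uniformly bounded above). This is useless without a lower bound uniform in $n$, which is the same problem again.
\item ``Factor out the lowest-order term of a suitable slice'' is not enough as stated. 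The order of $g_\chi$ at $0$ and its leading coefficient vary with $\chi$, and you need a bound that survives integration over $\chi$.
\end{itemize}

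\textbf{The missing idea.} The Taylor coefficients of the slices are polynomials in finitely many variables. Write $g_\chi(z)=\sum_{k\ge0}Q_k(\chi)z^k$.
\begin{itemize}
\item The identity $g_{R_\theta\chi}(z)=g_\chi(e^{i\theta}z)$ gives $Q_k(R_\theta\chi)=e^{ik\theta}Q_k(\chi)$. So the continuous function $Q_k$ has nonzero Fourier coefficients only at multi-indices $\alpha$ with $\sum_j j\alpha_j=k$.
\item Holomorphy in each coordinate separately removes the indices with a negative entry. The remaining ones are finitely many and supported in $\{1,\dots,k\}$.
\item Hence $Q_k(\chi)=\sum_{\sum_j j\alpha_j=k}c_\alpha\chi^\alpha$ is a polynomial in $\chi_1,\dots,\chi_k$, where the $c_\alpha$ are the monomial coefficients of $F$.
\item Since $F\not\equiv0$, some $Q_k\not\equiv0$; let $k_0$ be the least such $k$.
\end{itemize}
Apply Jensen's inequality to $g_\chi(z)/z^{k_0}$ at $z=0$ and integrate over $\chi$ with your rotation trick. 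This gives
\[
\int_{\mathbb{T}^\infty}\log|F_{[r]}|\,dm_\infty\ \ge\ k_0\log r+\int_{\mathbb{T}^\infty}\log|Q_{k_0}|\,dm_\infty,\qquad 0<r<1 .
\]
The right-hand side is finite because $\log|Q|$ is integrable on a finite torus for every nonzero polynomial $Q$. When $F(0)\ne0$, this is just your slice Jensen inequality with $g_\chi(0)=F(0)$ for all $\chi$, giving $\int\log|F_{[r]}|\,dm_\infty\ge\log|F(0)|$.

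The bound holds for every $r$, so the choice of $r_0$, the monotonicity step and the vague density claim become unnecessary. With it, your estimate $\sup_r\int\log^-|F_{[r]}|\,dm_\infty<\infty$ and the final application of Fatou's lemma go through.
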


		While the abscissa of uniform convergence $\sigma_u$ is invariant under taking vertical limits, the abscissa of convergence $\sigma_c$ generally fails to be invariant. In our context, for any $f \in \mathcal{N}_u$ with $\sigma_u(f) \le 0$, the convergence abscissa satisfies $\sigma_c(f_\chi) \le 0$ for almost every character $\chi \in \mathbb{T}^\infty$. By contrast, for $f$ in the Nevanlinna--Dirichlet class $\mathcal{N}$ (the completion of $\mathcal{N}_u$ under the metric $\Vert{}\cdot\Vert{}_0$ given in \eqref{Dipon-vasu-p2-eqn-002}), the vertical limit $f_\chi$ is not automatically meaningful, requiring one to first establish a suitable character action on the completion. Nevertheless, the following result is an analogue of the classical property in the Hardy space $\mathcal{H}^2$ of Dirichlet series, which plays a key role in establishing the theory of the class $\mathcal{N}_u$ (see, for instance, \cite[Section 2]{Brevig-JFA-2020}). For the convenience of the reader, we discuss this fact below. 
		\begin{lem}\label{Dipon-vasu-p2-lem-09}
			For a Dirichlet series $f$ in the class $\mathcal{N}_u$ satisfying $\sigma_u(f)\le 0,$ the generalized boundary value
			\begin{equation}\label{Dipon-vasu-p2-eqn-035}
				f^*(\chi) = \lim_{\sigma \to 0^+} f_\chi(\sigma)
			\end{equation}
			exists almost everywhere on $\mathbb{T}^\infty.$
		\end{lem}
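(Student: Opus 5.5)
The plan is to pass to the infinite polydisk via the Bohr lift and reduce the statement to Theorem \ref{Dipon-vasu-p2-thm-07}. Set $F=\mathcal{B}f$. Since $\sigma_u(f)\le 0$, for every $\sigma>0$ the series $f_\sigma$ converges uniformly on $\overline{\mathbb{C}_0}$. Hence $\mathcal{B}f_\sigma$ is a continuous function on $\mathbb{T}^\infty$, and it equals the dilation $\chi\mapsto F(p_1^{-\sigma}\chi_1,p_2^{-\sigma}\chi_2,\dots)$. Evaluating vertical limit functions pointwise gives
\[
f_\chi(\sigma)=\sum_{n\ge1}a_n\chi(n)n^{-\sigma}=\mathcal{B}f_\sigma(\chi), \qquad \sigma>0,\ \chi\in\mathbb{T}^\infty .
\]
By \eqref{Dipon-vasu-p2-eqn-003} and the monotonicity in $\sigma$, $\|F\|_{N(\mathbb{D}^\infty_1)}=\sup_{\sigma>0}\|f_\sigma\|_0=\|f\|_0<\infty$. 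So $F\in N(\mathbb{D}^\infty_1)$; this is the isometric identification of Guo \emph{et al.} If $f\equiv0$ the statement is trivial, so assume $F\neq0$.

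The key step is to match the limit $\sigma\to0^+$ in \eqref{Dipon-vasu-p2-eqn-035} with the radial limit $r\to1^-$ in Theorem \ref{Dipon-vasu-p2-thm-07}. The two dilations differ. The Dirichlet dilation scales the $j$-th coordinate by $p_j^{-\sigma}$, while $F_{[r]}$ scales it by $r^j$. I would handle this in one of two ways.

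\textbf{(a)} Check that the proof of Theorem \ref{Dipon-vasu-p2-thm-07} in \cite{GuoZhou-AnnalFourier-2025} only uses a one-parameter family of dilations $z_j\mapsto \rho_j(t)z_j$ with $\rho_j(t)\uparrow1$, $\sum_j \rho_j(t)<\infty$ for each fixed $t$, and the slice structure. It rests on the one-variable Fatou theorem for the slice functions $\zeta\mapsto F(\zeta\cdot\chi\text{-weights})$ together with Fubini on $\mathbb{T}^\infty$. Taking $\rho_j(\sigma)=p_j^{-\sigma}$ instead of $r^j$ then gives the same conclusion.

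\textbf{(b)} Argue directly with the flow. For fixed $\chi$, the function $g_\chi(z)=f_\chi(z)$ on $\mathbb{C}_0$, where it is defined, is the vertical limit function. Consider the one-variable function $h_\chi(w)=f_\chi\bigl(\tfrac{1-w}{1+w}\bigr)$ on $\mathbb{D}$. Using $T_\tau$-invariance of Haar measure and Fubini, in the form $\int_{\mathbb{T}^\infty}\int_{\mathbb{R}} \log^+|f_{\chi}(\sigma+it)|\,\frac{dt}{1+t^2}\,dm_\infty(\chi)\lesssim \|f_\sigma\|_0$, one gets a bound uniform in $\sigma$. For almost every $\chi$, Fatou's lemma then yields $\sup_{\sigma>0}\int_{\mathbb{R}}\log^+|f_\chi(\sigma+it)|\frac{dt}{1+t^2}<\infty$, so $f_\chi$ lies in the Nevanlinna class of the half-plane $\mathbb{C}_0$. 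The classical Fatou theorem for $N(\mathbb{C}_0)$ (via the Cayley transform to $N(\mathbb{D})$) gives nontangential limits at almost every boundary point $it$. The limit at $t=0$ is obtained by translation: for a.e.\ $\chi$ the limit exists at a.e.\ $t$. Since $(f_\chi)(\cdot+it)=f_{\chi\cdot n^{-it}}$, Fubini over $(\chi,t)\in\mathbb{T}^\infty\times[0,1]$ together with invariance of $m_\infty$ under the Kronecker flow shows that the set of $\chi$ for which $\lim_{\sigma\to0^+}f_\chi(\sigma)$ exists has full measure. This is the standard argument from \cite[Section 2]{Brevig-JFA-2020} for $\mathcal{H}^2$, with $L^2$ replaced by $\log^+$ integrability.

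I would present route (b), since it is self-contained given the classical half-plane Nevanlinna theory. The main obstacle is the uniform Poisson-weighted estimate, namely
\[
\int_{\mathbb{T}^\infty}\int_{\mathbb{R}}\log^+|f_\chi(\sigma+it)|\frac{dt}{\pi(1+t^2)}\,dm_\infty(\chi)=\int_{\mathbb{T}^\infty}\log^+|f_\chi(\sigma)|\,dm_\infty(\chi).
\]
This identity holds by Fubini and rotation invariance, and it bounds the left side by $\|f_\sigma\|_0\le\|f\|_0$. The difficulty is making sure that $f_\chi$ extends holomorphically to all of $\mathbb{C}_0$ for a.e.\ $\chi$. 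Here I would use $\sigma_u(f)\le0$, which gives this for every $\chi$, since uniform convergence is preserved under vertical limits. The measurability needed for Fubini follows from the continuity of $(\chi,s)\mapsto f_\chi(s)$ on $\mathbb{T}^\infty\times\mathbb{C}_\sigma$.
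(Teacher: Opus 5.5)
Your route (b) is essentially the paper's argument: the paper likewise reduces the lemma to showing $f_\chi\in N(\mathbb{C}_0)$ for almost every $\chi$ and then appeals to Fatou's theorem. Your Poisson-weighted Fubini bound (the flow-invariance identity of Lemma~\ref{Dipon-vasu-p2-lem-04}) and the translation/Fubini step that localizes the limit at $t=0$ supply exactly what the paper passes over as obvious. They also avoid the paper's identification of the $r^j$-radial limit of Theorem~\ref{Dipon-vasu-p2-thm-07} with the limit along $(p_j^{-\sigma}\chi_j)_j$; that same mismatch is why route (a) fails as stated, since $\sum_j p_j^{-\sigma}=\infty$ for $\sigma\le 1$.

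The one slip is in the step from the averaged bound to the pointwise bound. Set $g_\sigma(\chi)=\int_{\mathbb{R}}\log^+|f_\chi(\sigma+it)|\,\frac{dt}{1+t^2}$. Fatou's lemma only gives $\liminf_{\sigma\to0^+}g_\sigma(\chi)<\infty$ for a.e.\ $\chi$, not a finite supremum over $\sigma$. This is enough after one extra step: Poisson majorization of the bounded subharmonic function $\log^+|f_\chi|$ on $\mathbb{C}_{\sigma'}$, combined with the semigroup property of the Poisson kernel, gives $g_\sigma(\chi)\le(1+\sigma-\sigma')\,g_{\sigma'}(\chi)$ for $0<\sigma'<\sigma$. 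Hence $\sup_{0<\sigma\le 1}g_\sigma(\chi)\le 2\liminf_{\sigma'\to0^+}g_{\sigma'}(\chi)$.
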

		\begin{pf}
			Let $f(s) = \sum_{n \ge 1} a_n n^{-s}$ be a Dirichlet series in the class $\mathcal{N}_u$. For almost every completely multiplicative character $\chi \in \mathbb{T}^\infty$, we relate the vertical limit function
			\begin{equation}\label{Dipon-vasu-p2-eqn-033}
				f_\chi(it) = \sum_{n\ge 1} a_n \chi(n) n^{-it}
			\end{equation}
			to the Bohr lift $\mathcal{B}f$ of $f$. This relation follows from \cite[Section 4.2]{Hedenmalm-Duke-1997}. By Theorem \ref{Dipon-vasu-p2-thm-07}, the Bohr lift $\mathcal{B}f$ in $N(\mathbb{D}^\infty_1)$ admits a radial limit
			\[
			(\mathcal{B}f)^*(\chi) = \lim_{r \to 1^-} (\mathcal{B}f)_{[r]}(\chi)
			\]
			for almost every $\chi \in \mathbb{T}^\infty$. Specifically, for any character $\chi \in \mathbb{T}^\infty$ and $\tau \in \mathbb{R}$, consider the Kronecker flow
			\begin{equation}\label{Dipon-vasu-p2-eqn-034}
				T_\tau(\chi) = (2^{-i\tau}\chi_1, 3^{-i\tau}\chi_2, \dots, p_j^{-i\tau}\chi_j, \dots),
			\end{equation}
			where $p_j$ is the $j$-th prime number.
			Then the series \eqref{Dipon-vasu-p2-eqn-033} is well-defined and satisfies $$f_\chi(i\tau) = (\mathcal{B}f)^*(T_\tau(\chi)).$$
						Since the flow $T_\tau$ preserves Haar measure on $\mathbb{T}^\infty$, we obtain
			\begin{equation*}
			\int_{\mathbb{T}^{\infty}} \log(1 + |f_\chi(it)|) \,dm_\infty(\chi)	= \int_{\mathbb{T}^{\infty}} \log(1 + |(\mathcal{B}f)^*|) \, dm_\infty.
			\end{equation*}
			 Applying the Birkhoff--Khinchin ergodic theorem to the flow \eqref{Dipon-vasu-p2-eqn-034}, we obtain
			\begin{equation}\label{Dipon-vasu-p2-eqn-036}
				\int_{\mathbb{T}^{\infty}} \log(1 + |(\mathcal{B}f)^*(\chi)|) \, dm_\infty(\chi) = \lim_{T \to \infty} \frac{1}{2T} \int_{-T}^{T} \log(1 + |f_\chi(it)|) \, dt
			\end{equation}
			for almost every $\chi \in \mathbb{T}^\infty$. It is obvious that, for almost every $\chi \in \mathbb{T}^\infty$, the vertical limit function $f_\chi$ belongs to the conformally invariant Nevanlinna class $N(\mathbb{C}_0)$ associated with a suitable Cayley transform from $\mathbb{D}$ onto $\mathbb{C}_0$ (see the definition \ref{Dipon-vasu-p2-defn-01}).
			Consequently, the Dirichlet series $f_\chi$ converges for almost every $\chi \in \mathbb{T}^\infty$ and admits the requisite generalized boundary value \eqref{Dipon-vasu-p2-eqn-035}. This completes the proof.
		\end{pf}
		
		Furthermore, for any finite Borel measure $\mu$ on $\mathbb{R}$ by normalizing $\mu(\mathbb{R})=1$, Fubini's theorem yields
		\begin{align*}
			\int_{\mathbb{T}^{\infty}} \int_{\mathbb{R}} \log(1 + |f_\chi(it)|) \, d\mu(t) \, dm_\infty(\chi) &= \int_{\mathbb{R}} \int_{\mathbb{T}^{\infty}} \log(1 + |f_\chi(it)|) \, dm_\infty(\chi) \, d\mu(t) \\
			&= \|f\|_0.
		\end{align*}
		Consequently, it follows from \eqref{Dipon-vasu-p2-eqn-036} that the norm for a Dirichlet series $f \in \mathcal{N}_u$ takes the form
		\begin{equation}\label{Dipon-vasu-p2-eqn-032}
			\norm{f}_0 = \int_{\mathbb{T}^{\infty}} \log(1+|f^*(\chi)|)\,dm_\infty(\chi).
		\end{equation}
		Observe that this definition immediately guarantees the isometry $\Vert{}f\Vert{}_0 = \Vert{}f^*\Vert{}_0$.
\subsection{Carleson measures}\label{Dipon-vasu-p2-sec-2.4} The notion of Carleson measures arises naturally in the embedding problem for Hardy spaces of Dirichlet series $\mathcal{H}^p$: given $p \in [1,\infty)$, does there exist a constant $C > 0$ such that
\begin{equation}\label{Dipon-vasu-p2-eqn-041}
	\lim_{\sigma \to {\frac{1}{2}}^+} \int_{J} |f(\sigma+it)|^p \, dt \le C \norm{f}_{\mathcal{H}^p}^p
\end{equation}
holds for every bounded interval $J \subset \mathbb{R}$? Remarkably, the inequality \eqref{Dipon-vasu-p2-eqn-041} is known to be positive only for exponents $p\in 2\mathbb{N}$ and negative if $P<2$ as a corollary to a result of Harper \cite{Harper-ForumPi-2020}. In the theory of Carleson measures for classical Hardy spaces $H^p(\mathbb{D})$ and Hardy spaces of Dirichlet series $\mathcal{H}^p$, the notion of a Carleson square plays a central role. 

\begin{thm}\cite{Carleson-Annals-1962}
	Let $\mu$ be a nonnegative Borel measure on $\overline{\mathbb{C}_{1/2}}$ (or $\overline{\mathbb{D}}$) and let $H$ denote the space $\mathcal{H}^p$ (or $H^p(\mathbb{D})$). Then there exists an absolute constant $C > 0$ such that
	\begin{equation*}
		\norm{\mu}_{C,H} \le \sup_{Q} \frac{\mu(Q)}{\ell(Q)},
	\end{equation*}
	where the supremum is taken over all Carleson squares $Q$ in $\overline{\mathbb{C}_{1/2}}$ (or $\overline{\mathbb{D}}$).
\end{thm}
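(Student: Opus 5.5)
The plan is to reduce the statement to the classical Carleson embedding theorem for the Hardy space of a half-plane, which is then applied verbatim. For $H^p(\mathbb{D})$ this is Carleson's original theorem \cite{Carleson-Annals-1962}, and I take it as the model case. Let $\mathcal{C}(\mu)=\sup_Q \mu(Q)/\ell(Q)$ be the geometric constant. In the disk, for $f\in H^p(\mathbb{D})$ one bounds $\int|f|^p\,d\mu$ by $\int|Mf^*|^p\,d\mu$, where $M$ is the nontangential (or Hardy--Littlewood) maximal function of the boundary values. The key lemma is the distributional estimate
\begin{equation*}
\mu\big(\{z: \sup_{\Gamma(z)}|P[f^*]|>\lambda\}\big)\le C\,\mathcal{C}(\mu)\,\big|\{e^{i\theta}: Mf^*>\lambda\}\big|,
\end{equation*}
obtained by covering the open set $\{Mf^*>\lambda\}$ by its disjoint arcs $I_j$. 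Each point of the superlevel set of the Poisson extension lies in a tent over some dilated arc $3I_j$, and that tent sits inside a Carleson square of side $\ell(3I_j)$. One then integrates in $\lambda$ and uses the maximal theorem, valid for $p>1$. For $0<p\le1$ one applies the same bound to $|f|^{p/2}$ through an inner--outer factorization, i.e.\ through $B\cdot F^{2/p}$ with $F$ outer, which moves the problem to $p=2$.

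For the Dirichlet series space $\mathcal{H}^p$ I would transfer to a half-plane in two steps. The first step is a local embedding. For $f\in\mathcal{H}^2$, the function $g(s)=f(s+\tfrac12)/(s+1)$ belongs to $H^2(\mathbb{C}_0)$, with
\begin{equation*}
\|g\|_{H^2(\mathbb{C}_0)}\le C\|f\|_{\mathcal{H}^2}.
\end{equation*}
This is the Hedenmalm--Lindqvist--Seip estimate $\int_J|f(\tfrac12+it)|^2dt\le C|J|\,\|f\|_{\mathcal{H}^2}^2$ for $|J|\le1$, summed over unit intervals against the weight $(1+t^2)^{-1}$. I would prove it via Montgomery--Vaughan or Gallagher's inequality, together with Carlson's theorem to pass to the boundary. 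The second step is the weighted Carleson theorem for $H^2(\mathbb{C}_0)$ applied to $g$. The weight $|s+\tfrac12|^2$ is comparable to a constant on each Carleson square of bounded size. Hence a square condition on $\mu$ gives, square by square after a decomposition of the support of $\mu$ into horizontal unit strips,
\begin{equation*}
\int_{\mathbb{C}_{1/2}}|f|^2\,d\mu\lesssim \mathcal{C}(\mu)\,\|f\|_{\mathcal{H}^2}^2.
\end{equation*}
Translation invariance of $\|\cdot\|_{\mathcal{H}^2}$ under $T_\tau$ makes the constant uniform over the strips, and this is what lets the local embedding be used on every strip. For $p\in2\mathbb{N}$ the same bound follows by applying the $p=2$ case to $f^{p/2}$, which belongs to $\mathcal{H}^2$ with $\|f^{p/2}\|_{\mathcal{H}^2}^2=\|f\|_{\mathcal{H}^p}^p$.

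The main obstacle is the local embedding for general $p$. As the paper itself notes after \eqref{Dipon-vasu-p2-eqn-041}, it is known to hold only for $p\in2\mathbb{N}$ and fails for $p<2$ by Harper's result. So the plan fully establishes the theorem for the disk and for $\mathcal{H}^p$ with $p\in 2\mathbb{N}$. For other $p$ I would state the Dirichlet series version conditionally, assuming the local embedding, rather than as an unconditional claim. A second, smaller point is that for measures with unbounded support the strip decomposition needs the square condition to be used uniformly in $\tau$. I would check that summing the strip estimates does not lose a factor, by weighting the contribution of each strip with the decay of $(1+t^2)^{-1}$ and then removing the weight via translation.
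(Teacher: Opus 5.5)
The paper gives no proof of this statement. It is quoted from Carleson, whose theorem concerns $H^p(\mathbb{D})$ (equivalently, Hardy spaces of a half-plane), and nothing in the paper justifies the $\mathcal{H}^p$ clause. Your sketch of the disk case is the standard one and is fine, once the display is read as $\|\mu\|_{C,H}\le C\sup_Q\mu(Q)/\ell(Q)$; the constant $C$ is missing from the statement. If ``absolute'' is also meant to rule out dependence on $p$, run the reduction to $p=2$ (via $f=BF^{2/p}$, or via $|f|^{p/2}\le P[|f^*|^{p/2}]$) for every $p$, not only for $p\le1$. The $L^p$ maximal theorem gives constants that blow up as $p\to1^+$.

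\textbf{The gap is in the Dirichlet series part, and it is not confined to $p\notin 2\mathbb{N}$.} The point you call ``smaller'' is fatal. The inequality you want is false in $\mathcal{H}^p$ for every $p$, including $p=2$, so no way of summing the strip estimates can produce it.

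Let $\mu_L$ be arc length on the segment $\{1+it:\ |t|\le L\}\subset\mathbb{C}_{1/2}$.
\begin{itemize}
\item A Carleson square $Q$ over an interval $I$ of length $h$ satisfies $\mu_L(Q)\le|I\cap[-L,L]|\le h$, so $\sup_Q\mu_L(Q)/\ell(Q)\le 1$.
\item The constant function $f\equiv1$ has $\|f\|_{\mathcal{H}^p}=1$ and $\int|f|^p\,d\mu_L=2L$.
\end{itemize}
Thus $\|\mu_L\|_{C,\mathcal{H}^p}\ge 2L$, and no absolute constant exists. For $L=\infty$ the measure satisfies the geometric condition without being a Carleson measure at all. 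The measure $\mu=\sum_{k\ge1}\delta_k$ fails in the same way, with the mass escaping to the right instead of vertically.

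This is exactly the mechanism your argument runs into. Translation invariance makes the bound for each unit strip uniform in $\tau$, namely $\lesssim\mathcal{C}(\mu)\|f\|_{\mathcal{H}^2}^2$. But the strips are disjoint, so their contributions add, and $N$ strips cost a factor $N$. The weight $(1+t^2)^{-1}$ is what makes the local embedding summable. It cannot be ``removed via translation'', because translation gives uniformity, not summability; the unweighted quantity is genuinely infinite for $f\equiv1$.

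You should therefore withdraw the claim that your plan ``fully establishes the theorem for $\mathcal{H}^p$ with $p\in2\mathbb{N}$''. What your local-embedding argument does prove, for $p=2$ and hence for $p\in2\mathbb{N}$ via $f\mapsto f^{p/2}$, is a bounded-support version. If $\mu$ is supported in $\{\tfrac12\le\operatorname{Re}s\le\sigma_1,\ |\operatorname{Im}s|\le L\}$, then
\[
\|\mu\|_{C,\mathcal{H}^p}\le C(\sigma_1,L)\,\sup_Q\frac{\mu(Q)}{\ell(Q)},
\]
and the examples above show that the constant must grow with the size of the support. That is the correct form of the Dirichlet series statement. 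The version stated in the paper, with an absolute constant and arbitrary $\mu$, cannot be proved for any $p$.
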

For a bounded interval $I = [\tau - h/2, \tau + h/2] \subset \mathbb{R}$ of side length $\ell(I) = h > 0$, the associated \textit{Carleson square} in $\mathbb{C}_0$ is defined by
\begin{equation}\label{Dipon-vasu-p2-eqn-042}
	Q_0(I) := \bigl\{ \sigma + it \in \mathbb{C}_0 : 0 < \sigma < h, \; t \in I \bigr\}.
\end{equation}
Thus, $Q_0(I)$ is a square of side length $h$ with its base aligned on the boundary line $i\mathbb{R}$. A positive Borel measure $\mu$ on $\mathbb{C}_0$ is said to satisfy the \textit{geometric Carleson condition} if there exists a constant $C > 0$ such that
\begin{equation}\label{Dipon-vasu-p2-eqn-043}
	\mu\bigl(Q_0(I)\bigr) \le C \ell(I)
\end{equation}
for all intervals $I \subset \mathbb{R}$. Furthermore, $\mu$ satisfies the \textit{vanishing geometric Carleson condition} if
\begin{equation}\label{Dipon-vasu-p2-eqn-044}
	\lim_{\delta \to 0^+} \sup_{\substack{I \subset \mathbb{R} \\ \ell(I) \le \delta}} \frac{\mu\bigl(Q_0(I)\bigr)}{\ell(I)} = 0.
\end{equation}
\section{The Nevanlinna class of Dirichlet series}\label{Dipon-vasu-p2-sec-3}

\subsection{Disconnectness of $\mathcal{N}$}	In this section, we demonstrate that \(\mathcal{N}\) is disconnected. The argument proceeds by constructing a non-trivial functional on \(N(\mathbb{D}^{\infty}_1)\) that is sub-additive, continuous, and non-Archimedean. By virtue of Bohr's correspondence, \(\mathcal{N}\) will exhibit the same properties. For $0 < r < 1$ and $M > 0$, Guo \textit{et al.} \cite{GuoZhou-AnnalFourier-2025} introduced the $\ell^1$-subdomains
\begin{equation}\label{Dipon-vasu-p2-eqn-012}
	V_{r,M} = \{ \zeta \in \ell^1 : |\zeta|_1 < M \text{ and } \sup_{n \ge 1}|\zeta_n| < r \}.
\end{equation}
As $r \to 1^-$ and $M \to \infty$, the sets $V_{r,M}$ expand to exhaust $\mathbb{D}^\infty_1$. Notably, it can be observed that exponential of a constant function in both the class \(\mathcal{N}\) and \(N(\mathbb{D}^{\infty}_1)\) lies outside the component of the origin. Although the argument builds on the work of Shapiro and Shields \cite{ShapiroShields-Amer-1975} via the embedding of the classical Nevanlinna class $N(\mathbb{D})$ into $\mathcal{N}$, we include short details below for completeness. 

\begin{thm}
	The Nevanlinna-Dirichlet class $\mathcal{N}$ is disconnected.
\end{thm}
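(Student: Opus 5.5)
The plan is to transport the Shapiro--Shields argument to $N(\mathbb{D}^\infty_1)$ and then pull it back to $\mathcal{N}$ by the isometric Bohr correspondence of Guo \textit{et al.}. Since an isometric isomorphism of metric groups is a homeomorphism, it suffices to show that $N(\mathbb{D}^\infty_1)$ is disconnected. For $F\in N(\mathbb{D}^\infty_1)$ with $F\not\equiv 0$ I would define
\[
\nu(F)=\|F\|_0-\int_{\mathbb{T}^\infty}\log(1+|F^*|)\,dm_\infty ,
\]
and set $\nu(0)=0$. Here $\|F\|_0=\sup_{r<1}\int\log(1+|F_{[r]}|)\,dm_\infty$, and $F^*$ is the radial limit provided by Theorem \ref{Dipon-vasu-p2-thm-07}. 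By Fatou's lemma, $\nu(F)\ge 0$. Trivially $\nu(F)\le\|F\|_0$. Because $\log(1+x)$ differs from $\log^+x$ by at most $\log 2$, $\nu$ measures the ``singular loss of mass'' at the boundary. In one variable it equals the total mass of the positive part of the singular measure in the denominator of the canonical factorization. In particular, $\nu\equiv 0$ exactly on the Smirnov-type subclass.

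The key step is the non-Archimedean inequality
\[
\nu(F+G)\le\max\{\nu(F),\nu(G)\}.
\]
In one variable, Shapiro and Shields prove it from the canonical factorization: the singular part of $\log|f+g|$ is dominated by the larger of the singular parts of $\log|f|$ and $\log|g|$. For $N(\mathbb{D}^\infty_1)$ I would first reduce to finitely many variables. The $m$-th Abschnitt $A_mF$ has $\|A_mF\|_0\to\|F\|_0$ and $\int\log(1+|(A_mF)^*|)\to\int\log(1+|F^*|)$, so $\nu(A_mF)\to\nu(F)$. On the polydisk $\mathbb{D}^m$, I would slice along complex lines through the origin, $\zeta\mapsto F(\zeta\chi_1,\zeta^2\chi_2,\dots)$, in accordance with the definition of $F_{[r]}$. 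Each slice is a one-variable Nevanlinna function, to which the Shapiro--Shields inequality applies. Integrating over $\chi$ and passing to the limit in $m$ should give the inequality. This step, and especially justifying the exchange of the limits in $r$ and $m$ with the integration over the slices, is where I expect the main difficulty. If this route fails, I would instead try to prove the inequality directly from the description of $\nu$ as the limit $\lim_{r\to1}\int\log^+|F_{[r]}|-\int\log^+|F^*|$, using a least-harmonic-majorant argument on each slice.

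Granting the inequality, disconnectedness follows formally. First I need an element with $\nu>0$. Take $f_0(z)=\exp\big(\tfrac{1+z}{1-z}\big)$, which lies in $N(\mathbb{D})\setminus N^+$, and embed it as $F_0(\zeta)=f_0(\zeta_1)$. Then $c:=\nu(F_0)=\nu(f_0)>0$, and by Bohr's correspondence $F_0$ is the image of a Dirichlet series in $\mathcal{N}$. Put $U=\{F:\nu(F)\ge c\}$; it is nonempty and does not contain $0$.

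Next, the complement of $U$ is open. Suppose $\nu(F)<c$ and $\|G-F\|_0<c$. Then
\[
\nu(G)\le\max\{\nu(F),\nu(G-F)\}\le\max\{\nu(F),\|G-F\|_0\}<c .
\]

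Finally, $U$ itself is open. Suppose $\nu(F)\ge c$ and $\|G-F\|_0<c$. If $\nu(G)<c$, then
\[
\nu(F)\le\max\{\nu(G),\|F-G\|_0\}<c ,
\]
which is a contradiction, so $\nu(G)\ge c$.

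Hence $U$ is a nonempty proper clopen subset of $N(\mathbb{D}^\infty_1)$, since $0\notin U$. Transporting $U$ by the isometric Bohr correspondence yields a nonempty proper clopen subset of $\mathcal{N}$, so $\mathcal{N}$ is disconnected.
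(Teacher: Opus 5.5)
Your transport to $N(\mathbb{D}^\infty_1)$ and the clopen bookkeeping at the end are fine. The gap is the key inequality $\nu(F+G)\le\max\{\nu(F),\nu(G)\}$: it is false. It already fails for functions of $\zeta_1$ alone, so no slicing or Abschnitt argument can rescue it.

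Take $f(z)=\exp\bigl(\frac{1+z}{1-z}\bigr)$ and $g(z)=f(-z)$. Then $\log|f|$ is the Poisson kernel at $1$ and $|f^*|=1$ a.e., so $\nu(f)=\nu(g)=1$. However, $f+g=(S_1+S_{-1})/(S_1S_{-1})$, where $S_1=1/f$ and $S_{-1}=1/g$ are singular inner. The numerator has no singular inner factor: it is analytic across $\mathbb{T}\setminus\{\pm1\}$ and tends to $1$ radially at both $1$ and $-1$. Hence $\nu(f+g)=2>\max\{\nu(f),\nu(g)\}$. The lifts $F(\zeta)=f(\zeta_1)$ and $G(\zeta)=g(\zeta_1)$ give the same failure in $N(\mathbb{D}^\infty_1)$.

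What the canonical factorization actually yields is the lattice inequality $\sigma^+_{f+g}\le\sigma^+_f\vee\sigma^+_g$ for the positive singular measures. When these measures are mutually singular, the total mass of $\sigma^+_f\vee\sigma^+_g$ is the \emph{sum} of their masses. So the total singular mass $\nu$ is only subadditive. Subadditivity gives Lipschitz continuity of $\nu$, but not openness of $\{\nu\ge c\}$.

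The ultrametric inequality survives only after localizing at a single boundary point. This is what the Shapiro--Shields functional does, and it is what the paper's $\Gamma$ is modeled on (as printed, the paper's formula omits the normalizing factor $1-r$). Put $\Gamma(F)=\limsup_{r\to1^-}(1-r)\log^+|F(r,0,0,\dots)|$.
\begin{itemize}
\item From $|a+b|\le2\max\{|a|,|b|\}$ one gets $\log^+|a+b|\le\max\{\log^+|a|,\log^+|b|\}+\log2$. The term $(1-r)\log2$ disappears in the limit, so $\Gamma(F+G)\le\max\{\Gamma(F),\Gamma(G)\}$, with no factorization needed.
\item By \eqref{Dipon-vasu-p2-eqn-027}, with $\|\mathbf{P}_{(r,0,0,\dots)}\|_\infty=\frac{1+r}{1-r}$, one gets $\Gamma(F)\le2\|F\|_0$.
\item For your $F_0$, $\Gamma(F_0)=2$, while $\Gamma(0)=0$.
\end{itemize}
Your clopen argument then goes through verbatim, with the radius $c$ replaced by $c/2$.

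Alternatively, as in the paper's remark, $F\mapsto F(\cdot,0,0,\dots)$ is a continuous additive surjection of $N(\mathbb{D}^\infty_1)$ onto $N(\mathbb{D})$. Hence any clopen partition of $N(\mathbb{D})$ pulls back to one of $N(\mathbb{D}^\infty_1)$.
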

\begin{pf}
	Let $F$ be a function on $\mathbb{D}^\infty_1$, which we regard as the Bohr lift $\mathcal{B} f$ of a Dirichlet series $f$. We define a non-trivial functional $\Gamma$ on $N(\mathbb{D}^\infty_1)$ by 
	\begin{equation}\label{Dipon-vasu-p2-eqn-026}
		\Gamma(F) = \lim_{r\to 1}\,  \,\log^+|F_{[r]}(\zeta)|,\,\,\,\mbox{for every}\,\,\,\zeta\in\mathbb{D}^\infty_1.
	\end{equation}
	We will demonstrate that $F$ and $G$ lie in different components of $N(\mathbb{D}^\infty_1)$ whenever $\Gamma(F)\ne\Gamma(G).$ It is evident that $\Gamma$ is sub-additive due to the elementary inequality 
	$$\log^+(a+b) \le \log^+a + \log^+b + \log 2,$$ for every $a, b\ge 0.$ From \cite[Lemma 2.10]{GuoZhou-AnnalFourier-2025}, we have for $\zeta\in\mathbb{D}^\infty_1$ and for every $F\in N(\mathbb{D}^\infty_1)$, we have the following inequality:
	\begin{equation}\label{Dipon-vasu-p2-eqn-027}
		\log(1+|F(\zeta)|)\le \norm{\textbf{P}_\zeta}_\infty\norm{F}_{N(\mathbb{D}^\infty_1)},
	\end{equation}
	where 
	\begin{equation}\label{Dipon-vasu-p2-eqn-037}
		\textbf{P}_\zeta(w) = \prod_{n=1}^{\infty}\frac{1-|\zeta_n|^2}{|\zeta_n-w_n|^2},\,\,\,\,\,(w\in\mathbb{T}^\infty)
	\end{equation}
	is the Poisson kernel at $\zeta\in\mathbb{T}^{\infty}$ and $n\in\mathbb{N}.$  In fact, the family $\{\mathbf{P}_\zeta\}_{\zeta \in V_{r,M}}$ is uniformly bounded in $L^\infty(\mathbb{T}^\infty)$ for each $\zeta \in V_{r,M}$. Consequently, there exists a constant $L > 0$ such that \eqref{Dipon-vasu-p2-eqn-027} reduces to
	\begin{equation*}
		\log(1+|F(\zeta)|) \le L \|F\|_0.
	\end{equation*}
	It is obvious from the equation \eqref{Dipon-vasu-p2-eqn-027} that
	\begin{equation*}
		\Gamma(F+G) = \max\{\Gamma(F), \Gamma(G)\},\,\,\,\mbox{if}\,\,\,\Gamma(F)\ne\Gamma(G).
	\end{equation*}
	Hence $\Gamma$ satisfies the non-Archimedean property. Further, the inequality 
	\begin{align*}
		\Gamma(F) &= \lim_{r\to 1}\,\,\log^+|F_{[r]}(\zeta)| \\ &\le \lim_{r\to 1}\log(1+|F_{[r]}(\zeta)|)\\ & \le L \norm{F}_{N(\mathbb{D}^\infty_1)}
	\end{align*}
	ensures that $\Gamma$ is continuous on $N(\mathbb{D}^\infty_1).$  It is clear that for the following function 
	\begin{equation*}
		F_C = \exp(C):=( e^{c_1},e^{c_2},\cdots),\,\,\,\,\,(C=(c_1,c_2,\cdots)\in\mathbb{T}^\infty),
	\end{equation*}
	 we have $\Gamma(F_C) = C.$ Therefore, the function \(F_C\)  lies in different components of \(N(\mathbb{D}^\infty_1)\), and none of these components coincide with the component of the origin. The remainder of the proof proceeds analogously to that of \cite[Theorem 2.1]{ShapiroShields-Amer-1975}. This concludes the proof.
\end{pf}
\begin{rem}
The above result can also be interpreted simply as follows. Define the projection-type operator $P \colon N(\mathbb{D}^\infty_1) \to N(\mathbb{D})$ by$$Pf(z) = f(z, 0, 0, \dots)$$for every $f \in N(\mathbb{D}^\infty_1)$. The map $P$ maps $N(\mathbb{D}^\infty_1)$ continuously onto $N(\mathbb{D})$. Since $N(\mathbb{D})$ is disconnected (see for instance \cite{ShapiroShields-Amer-1975}), it follows that $N(\mathbb{D}^\infty_1)$ is also disconnected. In particular, the class $\mathcal{N}_u$ is disconnected.
\end{rem}

\subsection{Littlewood-Paley type identity on the class $\mathcal{N}_u$}

 In the classical framework of \( N(\mathbb{D}) \), Choa and Kim \cite{ChoaKim-PAMS-1997} established a Littlewood-Paley type identity using Green's theorem on a \( C^2 \)- smooth boundary. However, for the Dirichlet series analogue of the Nevanlinna class \( \mathcal{N} \), proving such an identity is not so straightforward. The main aim for the proof of Theorem \ref{Dipon-vasu-p2-thm-01} is to establish an analogue of the Hardy–Stein type identity in the setting of the Nevanlinna class of Dirichlet series, as formulated in the following lemma.

\begin{lem}\label{Dipon-vasu-p2-lem-06}
	Let $f\in\mathcal{N}_u$ with $f \not\equiv 0$. Then the function $w\mapsto\mathcal{A}(f, w)$ is continuously differentiable on $(0,\infty)$, and 
	\begin{equation*}
		\frac{\partial}{\partial w}\mathcal{A}(f, w) = -\lim_{T\to\infty}\frac{1}{2T}\int_{w}^{\infty}\int_{-T}^{T}\frac{|f'(\sigma+it)|^2}{|f(\sigma+it)|(1+|f(\sigma+it)|)^2}\,dt\, d\sigma.
	\end{equation*}
	Moreover, the limit converges uniformly on $(w_0,\infty)$ for every fixed $w_0>0.$
\end{lem}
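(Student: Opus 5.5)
The plan is to treat $u(\sigma,t):=\log(1+|f(\sigma+it)|)$ as a function on the half-plane $\mathbb{C}_{w_0/2}$ and apply Green's formula on rectangles. A direct computation with $v=|f|$ uses $\Delta|f| = |f'|^2/|f|$ and $|\nabla |f||=|f'|$ away from the zeros of $f$:
\begin{equation*}
\Delta u=\frac{\Delta v}{1+v}-\frac{|\nabla v|^2}{(1+v)^2}=\frac{|f'|^2}{|f|(1+|f|)^2}.
\end{equation*}
Near a zero of order $k$ we have $|f|\asymp|s-s_0|^k$. The right-hand side then behaves like $|s-s_0|^{k-2}$, which is locally integrable, and $u$ has only a conical singularity there. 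So $\Delta u$ carries no point masses, and Green's formula holds on rectangles after excising small discs around the (locally finitely many) zeros and letting their radii tend to $0$.

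\textbf{Step 1: Green's formula at finite $T$.} Fix $w_0>0$, $w>w_0$ and $W>w$, and apply Green's formula on $R=[w,W]\times[-T,T]$:
\begin{equation*}
\int_{-T}^{T}\partial_\sigma u(W,t)\,dt-\int_{-T}^{T}\partial_\sigma u(w,t)\,dt+\int_w^W\big(\partial_t u(\sigma,T)-\partial_t u(\sigma,-T)\big)\,d\sigma=\iint_R\Delta u .
\end{equation*}
The boundary derivatives satisfy $|\nabla u|\le |f'|/(1+|f|)\le |f'|$. Since $\sigma_u(f)\le 0$, the derivative satisfies $|f'(\sigma+it)|\le C(w_0)2^{-\sigma}$ on $\overline{\mathbb{C}_{w_0}}$ (assuming $a_1$ has been separated off, so $f'$ has no constant term). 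Consequently:
\begin{enumerate}
\item[(a)] the horizontal terms are $O(1)$ uniformly in $W$ and in $w\ge w_0$, so after dividing by $2T$ they are $O(1/T)$;
\item[(b)] the term at $\sigma=W$ tends to $0$ as $W\to\infty$, uniformly in $T$ after normalisation by $2T$.
\end{enumerate}
Since $\Delta u\ge 0$, we may let $W\to\infty$ by monotone convergence. This gives
\begin{equation*}
B_T(w):=\frac{1}{2T}\int_{-T}^{T}\partial_\sigma u(w,t)\,dt=-\frac{1}{2T}\int_w^\infty\!\!\int_{-T}^{T}\Delta u\,dt\,d\sigma+O(1/T),
\end{equation*}
with the $O(1/T)$ uniform in $w\ge w_0$. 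In particular, the right-hand side is finite.

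\textbf{Step 2: from $B_T$ to the derivative of $\mathcal{A}(f,\cdot)$.} Since $u(\cdot,t)$ is absolutely continuous, Fubini gives, for $w_0\le w<w'$,
\begin{equation*}
\frac{1}{2T}\int_{-T}^{T}\big(u(w',t)-u(w,t)\big)\,dt=\int_w^{w'}B_T(\sigma)\,d\sigma .
\end{equation*}
As $T\to\infty$ the left side tends to $\mathcal{A}(f,w')-\mathcal{A}(f,w)$, by the existence of the limit in \eqref{Dipon-vasu-p2-eqn-011}. The integrands $B_T$ are bounded by $C(w_0)$. I would next show that $B(w):=\lim_T B_T(w)$ exists. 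One route is almost periodicity of $\iint_{[w,\infty)\times[-T,T]}\Delta u$ via the Bohr lift and the Birkhoff--Oxtoby theorem. The other, more robust, route is to note that the mean of $\Delta u$ over horizontal strips equals the mean of the nonnegative Riesz measure of the subharmonic function $u$. The limit of that mean exists by the convexity of $\mathcal{A}$: the second distributional derivative of a convex function is a measure, and Step 1 identifies it. Dominated convergence then gives $\mathcal{A}(f,w')-\mathcal{A}(f,w)=\int_w^{w'}B$, and uniformity on $(w_0,\infty)$ follows from the uniformity of the $O(1/T)$ error and of the tail bound (b).

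\textbf{Step 3: continuity of $B$, the main obstacle.} Continuity of $B$ amounts to showing that the mean of $\Delta u$ over a thin strip $[w,w+\delta]\times\mathbb{R}$ tends to $0$ as $\delta\to0$, i.e.\ that the energy does not concentrate near zeros of $f$. For this I would combine two facts:
\begin{enumerate}
\item[(i)] Jensen's formula, applied to $f$ (bounded on $\mathbb{C}_{w_0/2}$ and nonconstant), gives a bound, uniform in $\tau$, on the number of zeros in each unit box $[w_0,\,w_0^{-1}]\times[\tau,\tau+1]$.
\item[(ii)] The local estimate $\int_{\mathbb{D}(s_0,r)}|f'|^2/|f|\lesssim r$ near a zero $s_0$, with constants uniform by normal-family compactness of the vertical translates $\{T_\tau f\}$ on compact subsets of $\mathbb{C}_{w_0/2}$.
\end{enumerate}
These give $\frac{1}{2T}\iint_{[w,w+\delta]\times[-T,T]}\Delta u\le \varepsilon(\delta)\to0$ uniformly in $T$. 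Hence $B$ is continuous, $\mathcal{A}(f,\cdot)\in C^1(0,\infty)$, and $\partial_w\mathcal{A}(f,w)=B(w)$ with the stated formula. If the normal-family uniformity in (ii) fails, the fallback is to pass to vertical limits $f_\chi$ and average over $\mathbb{T}^\infty$, using \eqref{Dipon-vasu-p2-eqn-032}-type identities to replace the uniform estimate by an averaged one.
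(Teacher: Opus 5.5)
Your argument is correct as a sketch, but it takes a genuinely different route from the paper. The paper regularizes. It works with the smooth function $u_\varepsilon=\log\bigl(1+\sqrt{|f|^2+\varepsilon^2}\bigr)$, whose $\sigma$-derivative $\operatorname{Re}(\overline{f}f')/\bigl(r_\varepsilon(1+r_\varepsilon)\bigr)$, with $r_\varepsilon=\sqrt{|f|^2+\varepsilon^2}$, is a uniformly continuous function of $(f,f')$. It is therefore uniformly almost periodic, so its vertical means exist at no cost. Green's identity then gives $\mathcal{A}_\varepsilon'(f,R)-\mathcal{A}_\varepsilon'(f,w)=\lim_{T\to\infty}\frac{1}{2T}\iint\Delta u_\varepsilon$. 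The paper next lets $\varepsilon\to0^+$ using only local $L^1$ convergence $\Delta u_\varepsilon\to q_f$, lets $R\to\infty$, and finally interchanges $\lim_T$ with $\int_w^\infty d\sigma$ by appealing to almost periodicity.

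You instead keep $u$ itself, excising discs and using that $\Delta u=q_f\,dA$ has no atoms. You keep $T$ finite with an $O(1/T)$ error uniform in $w\ge w_0$, and obtain the vertical means only afterwards, from convexity plus a quantitative non-concentration estimate. What regularization buys is cheap existence of means for each $\varepsilon>0$. However, the step where $C^1$ really comes from, namely locally uniform convergence of $\mathcal{A}_\varepsilon'$, needs control of $q_f$ near $Z(f)$ that is uniform over vertical translates, and local $L^1$ convergence on compacta does not give this. Moreover, the paper's pointwise mean $E_f(\sigma)$ is $+\infty$ on every vertical line through a simple zero, where $q_f(\sigma+it)\asymp|t-t_0|^{-1}$, and $t\mapsto q_f(\sigma+it)$ is in general unbounded, hence not uniformly almost periodic. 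Your route never uses pointwise-in-$\sigma$ means, and your Step 3 isolates exactly the estimate on which the $C^1$ claim and the uniformity rest.

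\emph{Points to tighten.}

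Convexity alone gives $\lim_T B_T(w)$ only off a countable set. Existence at every $w$, continuity, and uniform convergence all come from Step 3. The functions $N_T(w):=-\frac{1}{2T}\iint_{[w,\infty)\times[-T,T]}\Delta u$ are non-decreasing in $w$, and by Step 3 they are equicontinuous on compact intervals. Arzel\`a--Ascoli together with your identity $\mathcal{A}(f,w')-\mathcal{A}(f,w)=\lim_T\int_w^{w'}B_T$ then forces every subsequential limit to equal $\mathcal{A}'(f,\cdot)$, and your tail bound (b) handles large $w$. So the uniformity sentence in Step 2 belongs after Step 3.

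Step 3 also needs a lower bound $|f|\ge\eta(r)>0$, uniform in $\tau$, at points of the strip at distance $\ge r$ from $Z(f)$. This is where normal families genuinely enter: apply Hurwitz's theorem to limits of vertical translates, which are vertical limit functions $f_\chi\not\equiv0$.

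Estimate (ii), by contrast, needs no compactness. Since $\Delta|f|=|f'|^2/|f|$ without point masses, every disc satisfies $\int_{\mathbb{D}(c,r)}|f'|^2/|f|\,dA=\int_{\partial\mathbb{D}(c,r)}\partial_n|f|\,ds\le2\pi r\sup|f'|$.

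Finally, drop route (1). The function $t\mapsto\int_w^\infty q_f(\sigma+it)\,d\sigma$ is infinite on horizontal lines through zeros of $f$, so it is not continuous on the Bohr compactification. Birkhoff--Oxtoby therefore does not apply to the particular orbit you need.
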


 \begin{proof}
		Fix $w_0 > 0$. Since $\sigma_u(f) \le 0$, the Dirichlet series representing $f$ and its termwise derivative converge uniformly on $\mathbb{C}_{w_0}$. In particular, $f$ and $f'$ are bounded on every half-plane $\mathbb{C}_{w_0}$. For $\varepsilon > 0$, define the smooth regularization
		\begin{equation*}
			u_\varepsilon(s) := \log\Bigl(1 + \sqrt{|f(s)|^2 + \varepsilon^2}\Bigr), \qquad s \in \mathbb{C}_0.
		\end{equation*}
		Then $u_\varepsilon \in C^\infty(\mathbb{C}_0)$ and satisfies $0 \le u_\varepsilon(s) - \log(1 + |f(s)|) \le \varepsilon$. Consequently, setting
		\begin{equation*}
			\mathcal{A}_\varepsilon(f, \sigma) := \lim_{T \to \infty} \frac{1}{2T} \int_{-T}^T u_\varepsilon(\sigma + it) \, dt,
		\end{equation*}
		we have
		\begin{equation*}
			\mathcal{A}_\varepsilon(f, \sigma) \longrightarrow \mathcal{A}(f, \sigma) \qquad \text{uniformly for } \sigma \ge w_0 \text{ as } \varepsilon \to 0^+.
		\end{equation*}
		
		We first establish the corresponding identity for $\mathcal{A}_\varepsilon$. Set $r_\varepsilon(s) := \sqrt{|f(s)|^2 + \varepsilon^2}$. A direct calculation utilizing $\Delta\bigl(\psi(|f|^2)\bigr) = 4 |f'|^2 \bigl( \psi'(|f|^2) + |f|^2 \psi''(|f|^2) \bigr)$ shows that
		\begin{equation*}
			\Delta u_\varepsilon(s) = |f'(s)|^2 \, \frac{r_\varepsilon(s)^2 + \varepsilon^2 \bigl(1 + 2r_\varepsilon(s)\bigr)}{r_\varepsilon(s)^3 \bigl(1 + r_\varepsilon(s)\bigr)^2} \ge 0,
		\end{equation*}
		where $\psi$ is a $C^2$-smooth function with supp $\psi'\cap(-\infty,0]$ is compact
		(see for instance \cite[Lemma 4.1]{BrevigKouro-TAMS-2025}).
		Moreover, as $\varepsilon \to 0^+$,
		\begin{equation}\label{Dipon-vasu-p2-eqn-052}
			\Delta u_\varepsilon(s) \longrightarrow q_f(s) := \frac{|f'(s)|^2}{|f(s)|\bigl(1 + |f(s)|\bigr)^2}
		\end{equation}
		locally in $L^1(\mathbb{C}_0)$. Away from the zero set $Z(f)$, the convergence is locally uniform. Near a zero $s_0 \in Z(f)$ of order $m \ge 1$, factoring $f(s) = (s - s_0)^m g(s)$ with $g(s_0) \neq 0$ yields $q_f(s) = O\bigl(|s - s_0|^{m-2}\bigr)$ as $s \to s_0$, which is locally integrable. Since $Z(f)$ consists of isolated points, local $L^1$ convergence in \eqref{Dipon-vasu-p2-eqn-052} follows.
		
		Now fix $w_0 < w < R$. For $T > 0$, applying Green's identity on the rectangle $R_{w, R, T} := \{ \sigma + it : w < \sigma < R, \ |t| < T \}$ gives
		\begin{align}
			\int_{-T}^T \frac{\partial u_\varepsilon}{\partial \sigma}(R + it) \, dt &- \int_{-T}^T \frac{\partial u_\varepsilon}{\partial \sigma}(w + it) \, dt \nonumber \\
			&+ \int_w^R \left[ \frac{\partial u_\varepsilon}{\partial t}(\sigma + iT) - \frac{\partial u_\varepsilon}{\partial t}(\sigma - iT) \right] d\sigma = \int_w^R \int_{-T}^T \Delta u_\varepsilon(\sigma + it) \, dt \, d\sigma. \label{Dipon-vasu-p2-eqn-053}
		\end{align}
		Since $\left| \frac{\partial u_\varepsilon}{\partial t}(s) \right| \le |f'(s)|$ uniformly in $\varepsilon > 0$ on $\mathbb{C}_w$, the horizontal side integrals in \eqref{Dipon-vasu-p2-eqn-053}, when divided by $2T$, vanish as $T \to \infty$. Uniform almost periodicity of $\partial_\sigma u_\varepsilon$ on $\mathbb{C}_w$ ensures the existence of its vertical mean, yielding
		\begin{equation*}
			\mathcal{A}_\varepsilon'(f, R) - \mathcal{A}_\varepsilon'(f, w) = \lim_{T \to \infty} \frac{1}{2T} \int_w^R \int_{-T}^T \Delta u_\varepsilon(\sigma + it) \, dt \, d\sigma,
		\end{equation*}
		where $\mathcal{A}_\varepsilon'(f, x)$ represents the partial derivative of $\mathcal{A}_\varepsilon(f, x)$ respect to $x$.
		Passing to the limit $\varepsilon \to 0^+$, then local $L^1$ convergence \eqref{Dipon-vasu-p2-eqn-052} and uniform bounds on $\mathbb{C}_w$ imply $\mathcal{A}_\varepsilon'(f, \sigma) \to \mathcal{A}'(f, \sigma)$ locally uniformly for $\sigma \ge w_0$. Thus,
		\begin{equation}\label{Dipon-vasu-p2-eqn-054}
			\mathcal{A}'(f, R) - \mathcal{A}'(f, w) = \int_w^R E_f(\sigma) \, d\sigma,
		\end{equation}
		where
		\begin{equation*}
			E_f(\sigma) := \lim_{T \to \infty} \frac{1}{2T} \int_{-T}^T \frac{|f'(\sigma + it)|^2}{|f(\sigma + it)|\bigl(1 + |f(\sigma + it)|\bigr)^2} \, dt \ge 0.
		\end{equation*}
		In particular, $\mathcal{A}(f, \cdot)$ is convex on $(0, \infty)$. Writing $f(s) = a_1 + \sum_{n \ge 2} a_n n^{-s}$, uniform convergence on $\mathbb{C}_\delta$ ($\delta > 0$) yields $f(\sigma + it) \to a_1$ and $f'(\sigma + it) \to 0$ uniformly in $t$ as $\sigma \to \infty$. Hence,
		\begin{equation*}
			\mathcal{A}(f, \sigma) \longrightarrow \log(1 + |a_1|) \quad \text{and} \quad \mathcal{A}'(f, \sigma) \longrightarrow 0 \qquad \text{as } \sigma \to \infty.
		\end{equation*}
		
		Taking $R \to \infty$ in \eqref{Dipon-vasu-p2-eqn-054} yields
		\begin{equation}\label{Dipon-vasu-p2-eqn-060}
			-\mathcal{A}'(f, w) = \int_w^\infty E_f(\sigma) \, d\sigma = \int_w^\infty \lim_{T \to \infty} \frac{1}{2T} \int_{-T}^T \frac{|f'(\sigma + it)|^2}{|f(\sigma + it)|\bigl(1 + |f(\sigma + it)|\bigr)^2} \, dt \, d\sigma.
		\end{equation}
		Finally, because $E_f \in L^1([w_0, \infty))$, the tail integrals converge uniformly for $w \ge w_0$. Combining uniform almost periodicity on $\mathbb{C}_{w_0}$ with the uniform convergence of $f$ and $f'$, we interchange the limit $T \to \infty$ and integration over $[w, \infty)$. This completes the proof.
	\end{proof}

Since $w\mapsto\mathcal{A}(f, w)$ is continuously differentiable from the Lemma \eqref{Dipon-vasu-p2-lem-06}, we
can write
\begin{equation}\label{Dipon-vasu-p2-eqn-013}
	\mathcal{A}(f, \sigma_1) - \mathcal{A}(f, \sigma_0) = \int_{\sigma_0}^{\sigma_1}\frac{\partial}{\partial w} \mathcal{A}(f,w)\,dw,
\end{equation}
for $\sigma_1>\sigma>\sigma_0>0.$ It is obvious that $f(s)\to a_1=f(+\infty)$ as $\real(s)\to\infty$. Further, we have $\mathcal{A}(f,\sigma_1)\to\log(1+|f(+\infty)|^2)$ as $\sigma_1\to\infty$, and $\mathcal{A}(f,\sigma_0)\to\norm{f}_{0}$ as $\sigma_0\to 0.$ Then  Theorem \ref{Dipon-vasu-p2-thm-01} will follow directly.


\begin{pf}[\bf{Proof of Theorem \ref{Dipon-vasu-p2-thm-01}}] 
	In this approach, we primarily utilize the estimate given by  \eqref{Dipon-vasu-p2-eqn-013} along with Hardy-Stein identity \ref{Dipon-vasu-p2-lem-06}. It is important to note that the limit in Lemma \ref{Dipon-vasu-p2-lem-06} is uniform, which allows us to take the limit outside of the integral. This leads to the following expression: 
	\begin{equation*}
		 \mathcal{A}(f, \sigma_0) = \mathcal{A}(f, \sigma_1) + \lim_{T\to\infty}\frac{1}{2T}\int_{\sigma_0}^{\sigma_1}\int_{w}^{\infty}\int_{-T}^{T}\frac{|f'(\sigma+it)|^2}{|f(\sigma+it)|(1+|f(\sigma+it)|)^2}\,dt\, d\sigma \, dw.
	\end{equation*}
	For a fixed $T > 0,$ we divide the integral over $\sigma$ into two parts. In the first part, we apply Tonelli’s theorem and change the order of integration to obtain
	\begin{align}\label{Dipon-vasu-p2-eqn-014}
		\int_{\sigma_0}^{\sigma_1}\int_{w}^{\sigma_1}\int_{-T}^{T}&\frac{|f'(\sigma+it)|^2}{|f(\sigma+it)|(1+|f(\sigma+it)|)^2}\,dt\, d\sigma \, dw\nonumber \\& \,\,\,\,\,\,\,\,\,\hspace{3cm}= \int_{\sigma_0}^{\sigma_1}\int_{-T}^{T}\frac{|f'(\sigma+it)|^2}{|f(\sigma+it)|(1+|f(\sigma+it)|)^2}(\sigma - \sigma_0)\,dt\, d\sigma.
	\end{align}
	For the second part, we get
	\begin{align}\label{Dipon-vasu-p2-eqn-015}
		\int_{\sigma_0}^{\sigma_1}\int_{\sigma_1}^{\infty}\int_{-T}^{T}&\frac{|f'(\sigma+it)|^2}{|f(\sigma+it)|(1+|f(\sigma+it)|)^2}\,dt\, d\sigma \, dw \nonumber\\& \,\,\,\,\,\,\,\,\,\hspace{3cm}= (\sigma_1 - \sigma_0) \int_{\sigma_1}^{\infty}\int_{-T}^{T}\frac{|f'(\sigma+it)|^2}{|f(\sigma+it)|(1+|f(\sigma+it)|)^2}\,dt\, d\sigma.
	\end{align}
By combining \eqref{Dipon-vasu-p2-eqn-014} and \eqref{Dipon-vasu-p2-eqn-015}, we can derive the following 
\begin{align*}
			\mathcal{A}(f, \sigma_0) &= \mathcal{A}(f, \sigma_1) + \lim_{T\to\infty}\frac{1}{2T}\int_{\sigma_0}^{\sigma_1}\int_{-T}^{T}\frac{|f'(\sigma+it)|^2}{|f(\sigma+it)|(1+|f(\sigma+it)|)^2}(\sigma - \sigma_0)\,dt\, d\sigma \\  & \,\,\,\,\,\,\,\,\,\hspace{7cm}- (\sigma - \sigma_0)\frac{\partial}{\partial\sigma_1}\mathcal{A}(f,\sigma_1).
\end{align*}
It is clear that the final term undergoes exponential decay as $\sigma_1 \to \infty$. Thus, we can effectively secure the desired result by initially allowing $\sigma_1$ to $\infty$, followed by permitting $\sigma_0$ to approach $0^+.$ This completes the proof.
\end{pf}

We now proceed to the proof of Theorem \ref{Dipon-vasu-p2-thm-04}, which is inspired by our approach used in the proof of Lemma \ref{Dipon-vasu-p2-lem-06}.

\begin{pf}[\bf{Proof of Theorem \ref{Dipon-vasu-p2-thm-04}}]
	Fix $T>0.$ Letting $\sigma_0\to 0^+$ and $\sigma_1\to\infty$, then denote
	\begin{align}\label{Dipon-vasu-p2-eqn-023}
		D_f(T) = \frac{1}{2T}\int_{-T}^{T}\log(1+|f(it)|)dt\,&-\log(1+|f(+\infty)|)\,\\&-\,\frac{1}{2T}\int_{-T}^{T}\int_{0}^{\infty}\frac{|f'(\sigma+it)|^2}{|f(\sigma+it)|(1+|f(\sigma+it)|)^2}\,\sigma \,d\sigma\, dt\nonumber.
	\end{align}
	We have to prove $D_f(T)\to 0$ as $T\to\infty$. For $\varepsilon>0$, define $f_\varepsilon(s) := f(s+\varepsilon).$ Since $\sigma_u(f)\le 0$, we have $f_\varepsilon(s)\in H^\infty(\mathbb{C}_0)$. Indeed $\norm{f_\varepsilon}_\infty<\infty.$ Then from \eqref{Dipon-vasu-p2-eqn-023}, it is enough to show 
		\begin{align}\label{Dipon-vasu-p2-eqn-024}
		\lim_{T \to \infty} \bigg|&\int_{-T}^{T}\log(1+|f(\varepsilon+it)|)dt\,-\log(1+|f(+\infty)|)\,\\&\,\,\hspace{3cm}-\,\int_{-T}^{T}\int_{0}^{\infty}\frac{|f'(\sigma+\varepsilon+it)|^2}{|f(\sigma+\varepsilon+it)|(1+|f(\sigma+\varepsilon+it)|)^2}\,(\rho-\varepsilon) \,d\sigma\, dt\nonumber\bigg|=0.
	\end{align}
	 Denote
	\begin{equation*}
			q_f(\rho,t) = \frac{|f'(\rho+it)|}{|f(\rho+it)|(1+|f(\rho+it)|)^2}.
	\end{equation*}
	Also from the Hardy-Stein identity \eqref{Dipon-vasu-p2-eqn-060}, we have
	\begin{equation}\label{Dipon-vasu-p2-eqn-025}
		\int_{0}^{\infty}\rho E_f(\rho)d\rho = \norm{f}_0 - \log(1+|f(+\infty)|)<\infty.
	\end{equation}
	Consequently, only the remaining part is to prove 
	\begin{equation}\label{Dipon-vasu-p2-eqn-061}
		\lim_{\epsilon\to 0^+}\lim_{T \to \infty}\frac{1}{2T}\int_{-T}^{T}\int_{0}^{\infty}\abs{\rho q_f(\rho,t)-(\rho - \varepsilon)_+ q_f(\rho,t)}d\rho\,dt=0,
	\end{equation}
		where
	\begin{equation*}
		(\rho - \varepsilon)_+ = \begin{cases} 0, & 0 < \rho < \varepsilon, \\ \rho - \varepsilon, & \rho \ge \varepsilon. \end{cases}
	\end{equation*}
	Then the integrand in \eqref{Dipon-vasu-p2-eqn-061} is $\rho q_f(\rho,t)$, when $0 < \rho < \varepsilon$ and $\varepsilon q_f(\rho,t)$, when $\rho \ge \varepsilon.$ 
	Hence \eqref{Dipon-vasu-p2-eqn-061} is equivalent to
	\begin{equation*}
		\lim_{\varepsilon \to 0^+} \left[ \int_{0}^{\varepsilon} \rho E_f(\rho) \, d\rho + \varepsilon \int_{\varepsilon}^{\infty} E_f(\rho) \, d\rho \right] = 0,
	\end{equation*}
	The equation \eqref{Dipon-vasu-p2-eqn-061} is obvious to establish from \eqref{Dipon-vasu-p2-eqn-025}. Indeed,
	$$\int_{0}^{\varepsilon} \rho E_f(\rho) \, d\rho \to 0,$$
	and
	$$\varepsilon \int_{\varepsilon}^{\infty} E_f(\rho) \, d\rho \le \int_{\varepsilon}^{\infty} \rho E_f(\rho) \, d\rho \to 0.$$
	Moreover $\sigma E_f(\sigma)\in L^1(0,\infty)$, the right-hand side
	tends to zero as $\varepsilon\to0^+$.
 This completes the proof.
\end{pf}

\subsubsection{Potential-theoretic approach to prove Theorem~\ref{Dipon-vasu-p2-thm-01}}

We now present an alternative proof of Theorem~\ref{Dipon-vasu-p2-thm-01} via a potential-theoretic approach, applying the Poisson--Jensen formula \cite[Theorem~4.5.1]{Ransford-Potential-1995} to a suitable subharmonic function.

\begin{thm}[\cite{Ransford-Potential-1995}, Poisson-Jensen Formula]\label{Dipon-vasu-p2-thm-015}
Let $D$ be a bounded regular domain in the complex plane $\mathbb{C}$, and let $u$ be a function subharmonic on a neighbourhood of $\overline{D}$, with $u\neq -\infty$ on $D$. Then 
\begin{equation*}
u(z) = \int_{\partial D}u(\zeta)\,dw_D(z,\zeta) - \frac{1}{2\pi}\int_{D}g_D(z,w)\, \Delta u(w), 
\end{equation*}
where $z, w\in D$ and $\zeta\in\partial D$. 
\end{thm}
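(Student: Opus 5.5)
We argue by Riesz decomposition and approximation: first establish the identity for smooth subharmonic $u$ via the Dirichlet problem and the Green potential, then pass to general $u$ by decreasing regularisation. Throughout, $\Delta u$ is the Riesz measure, a positive Radon measure. Since $\Delta\log|z-w| = 2\pi\delta_w$, the Green function has the form $g_D(z,w) = \log\frac{1}{|z-w|} + H_w(z)$ with $H_w$ harmonic. As $D$ is bounded and regular, $g_D(\cdot,w)$ is continuous on $\overline{D}\setminus\{w\}$, vanishes on $\partial D$, is symmetric, and is bounded on compact subsets of $\overline{D}\times\overline{D}$ away from the diagonal.

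\emph{Step 1 (smooth case).} Assume first that $u$ is $C^2$ and subharmonic on a neighbourhood of $\overline{D}$. Define:
\begin{enumerate}
\item $h(z) := \int_{\partial D} u(\zeta)\,d\omega_D(z,\zeta)$. This is the Perron--Wiener solution of the Dirichlet problem with continuous boundary data $u|_{\partial D}$. By regularity, $h$ is harmonic on $D$, continuous on $\overline D$, and $h=u$ on $\partial D$.
\item $q(z) := -\frac{1}{2\pi}\int_D g_D(z,w)\,\Delta u(w)\,dA(w)$. Since $\Delta u$ is bounded on $\overline D$, and $g_D(z,\cdot)$ is integrable with only a logarithmic singularity, $q$ is continuous on $\overline D$ and $q=0$ on $\partial D$.
\end{enumerate}
Differentiating under the integral in the sense of distributions gives $\Delta q = \Delta u$ on $D$. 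Hence $u-h-q$ is distributionally harmonic on $D$, so it is harmonic by Weyl's lemma. It is continuous on $\overline D$ and vanishes on $\partial D$, so it is identically zero by the maximum principle. This proves the formula for smooth $u$.

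\emph{Step 2 (general case).} Let $u$ be subharmonic on an open set $U\supset\overline D$ with $u\not\equiv-\infty$. Convolve with a radial mollifier to get smooth subharmonic functions $u_n$ on a slightly smaller neighbourhood of $\overline D$. These satisfy $u_n\downarrow u$ pointwise, and $\Delta u_n\to\Delta u$ weakly as measures on a neighbourhood of $\overline D$. Applying Step 1 to $u_n$ gives
\begin{equation*}
\frac{1}{2\pi}\int_D g_D(z,w)\,\Delta u_n(w) = \int_{\partial D}u_n\,d\omega_D(z,\cdot) - u_n(z).
\end{equation*}
The boundary integrals converge to $\int_{\partial D}u\,d\omega_D(z,\cdot)$ by monotone convergence, since $u_n\le u_1$ is bounded above on $\partial D$. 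Also $u_n(z)\to u(z)$. So the right-hand side converges to $\int_{\partial D}u\,d\omega_D - u(z)$, and it remains to identify the limit of the left-hand side.

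\emph{Main obstacle: the Green potential term.} We must show $\int g_D(z,\cdot)\,\Delta u_n\to\int g_D(z,\cdot)\,\Delta u$. The difficulty is that $g_D(z,\cdot)$ is unbounded at $w=z$ and the measures may charge $\partial D$. We handle the two pieces separately.
\begin{enumerate}
\item \emph{Near $\partial D$.} Extend $g_D(z,\cdot)$ by $0$ outside $D$. Regularity of $\partial D$ makes this extension continuous away from $z$, so no mass on $\partial D$ contributes.
\item \emph{Near $z$.} Split $g_D = \min(g_D,M) + (g_D-M)_+$. The truncated part $\min(g_D(z,\cdot),M)$ is continuous and compactly supported, so its integrals converge by weak convergence. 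Since $g_D(z,\cdot)$ is lower semicontinuous and nonnegative, the portmanteau theorem gives $\liminf_n\int g_D\,\Delta u_n\ge\int g_D\,\Delta u$.
\end{enumerate}
For the reverse inequality, it suffices to make the tail $\int(g_D-M)_+\,\Delta u_n$ small uniformly in $n$. This tail is carried by a small disc around $z$. Using $g_D(z,w)\le\log\frac{1}{|z-w|}+C$ there, we would compare with the logarithmic potentials of $\Delta u_n$ at $z$, which converge since these potentials are $u_n$ minus harmonic corrections. If $u(z)=-\infty$, both sides of the formula equal $-\infty$, because Fatou's argument forces $\int g_D\,\Delta u=+\infty$. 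If $u(z)>-\infty$, the local potential is finite, and the uniform tail bound yields the reverse inequality, which completes the proof.
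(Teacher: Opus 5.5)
The paper gives no proof of this statement; it is quoted from Ransford's book (Theorem~4.5.1). The textbook argument takes the Riesz decomposition $u=h+p_\mu$ on a neighbourhood of $\overline D$, with $\mu=\frac{1}{2\pi}\Delta u$. It then reduces by Fubini to the kernels $\log|\cdot-w|$, for which the formula is the harmonic-measure representation of $g_D(z,w)$. Your route (smooth case via the Dirichlet problem, Weyl's lemma and the maximum principle, then decreasing mollification) is genuinely different. Step~1 is correct, and so is the lower-semicontinuity half of Step~2. A nice feature is that Riesz mass on $\partial D$ is handled by continuity of the zero-extended Green function, which is exactly where regularity enters. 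However, two points in Step~2 do not work as written.

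First, the case $u(z)=-\infty$. Write $\mu_n=\frac{1}{2\pi}\Delta u_n$. Portmanteau/Fatou gives only $\int g_D(z,\cdot)\,d\mu\le\liminf_n\int g_D(z,\cdot)\,d\mu_n$. That is an upper bound, so it cannot force $\int g_D(z,\cdot)\,d\mu=+\infty$. The correct reason is local. Choose $r$ with $\overline{B(z,2r)}\subset D$, set $\mu'=\mu|_{B(z,2r)}$, and use the Riesz decomposition $u=p_{\mu'}+h$ on $B(z,2r)$. Then $u(z)=-\infty$ if and only if $\int\log\frac{1}{|z-w|}\,d\mu'(w)=+\infty$. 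The bound $g_D(z,w)\ge\log\frac{\operatorname{dist}(z,\partial D)}{|z-w|}$ (monotonicity of Green functions) then gives $\int_D g_D(z,\cdot)\,d\mu=+\infty$. Meanwhile $\int_{\partial D}u\,d\omega_D(z,\cdot)\le\max_{\partial D}u<\infty$, so the right-hand side is $-\infty$.

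Second, the tail bound. The claim that potentials of $\Delta u_n$ are $u_n$ minus harmonic corrections only helps if you know those corrections converge at $z$. The simplest fix is to mollify the localized measure, so the correction is literally independent of $n$. Indeed $p_{\mu'}*\rho_n=p_{\mu'*\rho_n}$ and $h*\rho_n=h$ near $z$. Also $\mu'*\rho_n=\mu_n$ on $B(z,r)$ once $1/n<r$. Hence $p_{\mu'*\rho_n}(z)=u_n(z)-h(z)\to p_{\mu'}(z)$. Put $\phi=\log\frac{1}{|z-\cdot|}$ and subtract the weakly convergent integrals of $\min(\phi,M)$. This gives $\lim_n\int(\phi-M)_+\,d(\mu'*\rho_n)=\int(\phi-M)_+\,d\mu'$, which tends to $0$ as $M\to\infty$. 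Finally, $g_D(z,w)\le\log\frac{\operatorname{diam}D}{|z-w|}$ turns this into a bound on $\int(g_D(z,\cdot)-M)_+\,d\mu_n$ that is uniform in $n$.

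Both repairs use the local Riesz decomposition, so your approximation does not really avoid the main tool of the textbook proof. If you want to keep your structure, Step~2 closes more cleanly in the sense of distributions, with no tail estimate and no case split.

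\begin{itemize}
\item For $\varphi\in C_c(D)$, Fubini gives $\int_D G\mu_n\,\varphi\,dA=\int\Psi\,d\mu_n$. Here $G\mu_n(x)=\int_D g_D(x,w)\,d\mu_n(w)$ and $\Psi(w)=\int_D g_D(x,w)\varphi(x)\,dA(x)$, extended by $0$ off $D$. By regularity $\Psi$ is continuous with compact support, so $G\mu_n\to G\mu$ in $\mathcal D'(D)$.
\item $h_n(x)=\int_{\partial D}u_n\,d\omega_D(x,\cdot)$ decreases to $h(x)=\int_{\partial D}u\,d\omega_D(x,\cdot)\ge u\not\equiv-\infty$. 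Hence the convergence is locally uniform by Harnack.
\item Passing to the limit in $u_n=h_n-G\mu_n$ gives $u=h-G\mu$ almost everywhere. Both sides are subharmonic, since the Green potential $G\mu$ is superharmonic and locally integrable. So they coincide at every point of $D$, including where $u=-\infty$.
\end{itemize}
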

In Theorem \ref{Dipon-vasu-p2-thm-015}, $w_D(z,\zeta)$ denotes the harmonic measure while $g_D(z,w)$ represents Green's function associated with a proper subdomain $D$ of the Riemann sphere $\mathbb{C}\cup\{\pm\infty\}$. Since the half-plane domains are unbounded, additional information about the zero sets of functions in $\mathcal{H}^\infty$ is required (see for instance \cite{BrevigKouro-TAMS-2025}). This information will be derived from almost periodicity, drawing on classical results of Bohr and Jessen \cite{Bohr-ActaMath-1930}. Since the Littlewood--Paley identity can be extended to general domains using Green functions, we first establish a representation formula for an arbitrary rectangle in the half-plane. This potential-theoretic approach yields our desired Littlewood--Paley type identity (see Theorem~\ref{Dipon-vasu-p2-thm-01}) for functions in the Nevanlinna class of Dirichlet series $\mathcal{N}_u$.

\begin{lem}\label{Dipon-vasu-p2-lem-11}
	Let $S_{\sigma_0, R} := \{ s = \sigma + it \in \mathbb{C} : \sigma_0 < \sigma < R \}$ for $0 < \sigma_0 < R$, and let $G_{\sigma_0, R}$ denote the Green function of $S_{\sigma_0, R}$, normalized such that
	\begin{equation*}
		-\Delta_z G_{\sigma_0, R}(z, w) = 2\pi \delta_w.
	\end{equation*}
	Then, for all $\sigma, \rho \in (\sigma_0, R)$,
	\begin{equation}\label{Dipon-vasu-p2-eqn-008}
		\int_{\mathbb{R}} G_{\sigma_0, R}(\sigma + it, \rho) \, dt = 2\pi \, \frac{(\min\{\sigma, \rho\} - \sigma_0)(R - \max\{\sigma, \rho\})}{R - \sigma_0}.
	\end{equation}
	In particular, taking the limit as $R \to \infty$ yields
	\begin{equation}\label{Dipon-vasu-p2-eqn-009}
		\lim_{R \to \infty} \int_{\mathbb{R}} G_{\sigma_0, R}(\sigma + it, \rho) \, dt = 2\pi (\min\{\sigma, \rho\} - \sigma_0).
	\end{equation}
\end{lem}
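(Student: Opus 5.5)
Set $h(\sigma):=\int_{\mathbb{R}} G_{\sigma_0,R}(\sigma+it,\rho)\,dt$ for fixed $\rho\in(\sigma_0,R)$ and identify it as the solution of a one-dimensional boundary value problem. The key observation is that integrating out the $t$-variable converts the two-dimensional Green function of the strip into the one-dimensional Green function of $-\frac{d^2}{d\sigma^2}$ on $(\sigma_0,R)$ with Dirichlet conditions. So we verify three facts about $h$: finiteness, the boundary values $h(\sigma_0^+)=h(R^-)=0$, and the distributional equation $-h''=2\pi\delta_\rho$ on $(\sigma_0,R)$. The unique solution of that problem is
\begin{equation*}
h(\sigma)=2\pi\,\frac{(\min\{\sigma,\rho\}-\sigma_0)(R-\max\{\sigma,\rho\})}{R-\sigma_0},
\end{equation*}
the piecewise linear tent function vanishing at both ends. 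Its derivative jumps by $-2\pi\cdot\frac{(R-\sigma_0)}{R-\sigma_0}=-2\pi$ at $\rho$, which gives \eqref{Dipon-vasu-p2-eqn-008}.

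For the first step I would use an explicit model. The map $z\mapsto e^{\pi i (z-\sigma_0)/(R-\sigma_0)}$ sends the strip $S_{\sigma_0,R}$ onto the upper half-plane $\mathbb{H}$, which gives
\begin{equation*}
G_{\sigma_0,R}(z,w)=\log\left|\frac{\zeta(z)-\overline{\zeta(w)}}{\zeta(z)-\zeta(w)}\right|.
\end{equation*}
From this, $G$ is positive, symmetric, has a logarithmic singularity at $z=w$, and decays like $e^{-\pi|t|/(R-\sigma_0)}$ as $|t|\to\infty$, uniformly for $\sigma$ in compact subsets. This decay gives finiteness of $h$ and justifies differentiating under the integral sign.

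Next, for a test function $\varphi\in C_c^\infty(\sigma_0,R)$, I would pair $h$ with $\varphi''$. To do this, multiply $\varphi(\sigma)$ by a cutoff $\chi_T(t)$ equal to $1$ on $[-T,T]$ and supported in $[-T-1,T+1]$. Applying Green's identity in the strip gives
\begin{equation*}
\int G\,\Delta(\varphi\chi_T)\,dA=-2\pi\varphi(\rho)\chi_T(0).
\end{equation*}
The terms containing $\chi_T'$ and $\chi_T''$ are supported where $|t|\ge T$, so the exponential decay makes them vanish as $T\to\infty$. In the limit we obtain $\int h\varphi''\,d\sigma=-2\pi\varphi(\rho)$.

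The boundary values come from a bound of the form $G(\sigma+it,\rho)\le C\,(\sigma-\sigma_0)\,e^{-c|t|}$ near $\sigma=\sigma_0$, and similarly near $R$. This bound follows from the explicit formula, since $\operatorname{Im}\zeta\to0$ there; dominated convergence then gives $h\to0$ at both endpoints. A function that is affine on $(\sigma_0,\rho)$ and on $(\rho,R)$, with these boundary values and the prescribed jump, is exactly the stated tent.

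I expect the uniform decay and boundary estimate to be the main technical point, and I would verify it directly from the half-plane formula. As a shortcut I could cite the known computation $\int_{\mathbb{R}} G\,dt$ for strips in \cite{BrevigKouro-TAMS-2025}.

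Finally, \eqref{Dipon-vasu-p2-eqn-009} follows by letting $R\to\infty$ in the closed form, since $\frac{R-\max\{\sigma,\rho\}}{R-\sigma_0}\to1$.
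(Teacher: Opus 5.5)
Your proposal is correct and follows the paper's route: integrate $-\Delta G_{\sigma_0,R}(\cdot,\rho)=2\pi\delta_\rho$ in $t$ and identify $\int_{\mathbb{R}} G_{\sigma_0,R}(\sigma+it,\rho)\,dt$ with the one-dimensional Dirichlet Green function (the tent function) on $(\sigma_0,R)$. The paper simply asserts that the $\partial_t^2$ term drops out and that $A(\sigma_0,\rho)=A(R,\rho)=0$; your half-plane model supplies the exponential decay in $|t|$ and the bound $G\lesssim(\sigma-\sigma_0)e^{-c|t|}$ that these two steps tacitly require (and, as a small nit, $\Delta(\varphi\chi_T)=\varphi''\chi_T+\varphi\chi_T''$ has no $\chi_T'$ term).
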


\begin{proof}
	Define $A(\sigma, \rho) := \int_{\mathbb{R}} G_{\sigma_0, R}(\sigma + it, \rho) \, dt$. By the vertical translation invariance of the strip $S_{\sigma_0, R}$, the integral $A(\sigma, \rho)$ is independent of $\operatorname{Im}(\rho)$. Integrating the distributional equation $$-\Delta_z G_{\sigma_0, R}(z, \rho) = 2\pi \delta_\rho(z)$$ with respect to $t \in \mathbb{R}$, the term involving $\partial^2/\partial t^2$ vanishes, leaving
	\begin{equation*}
		-\frac{\partial^2}{\partial \sigma^2} A(\sigma, \rho) = 2\pi \delta_\rho(\sigma).
	\end{equation*}
	Furthermore, the boundary conditions for $G_{\sigma_0, R}$ enforce $A(\sigma_0, \rho) = A(R, \rho) = 0$. Consequently, $(2\pi)^{-1} A(\sigma, \rho)$ is precisely the one-dimensional Green function for the Laplacian on the interval $(\sigma_0, R)$. Therefore,
	\begin{equation*}
		\frac{A(\sigma, \rho)}{2\pi} = \frac{(\min\{\sigma, \rho\} - \sigma_0)(R - \max\{\sigma, \rho\})}{R - \sigma_0},
	\end{equation*}
	establishing \eqref{Dipon-vasu-p2-eqn-008}. Taking $R \to \infty$ directly yields \eqref{Dipon-vasu-p2-eqn-009}. This completes the proof.
\end{proof}

\begin{thm}\label{Dipon-vasu-p2-thm-016}
	Let $f \in \mathcal{N}_u$ with $f \not\equiv 0$, and define $U_f(s) := \log(1 + |f(s)|)$ for $s \in \mathbb{C}_0$. Then, in the sense of distributions on $\mathbb{C}_0$, we have
	\begin{equation*}
		\Delta U_f = \frac{|f'|^2}{|f|(1+|f|)^2}.
	\end{equation*}
	Moreover, the $d_0$-norm of $f$ satisfies the Littlewood--Paley identity
	\begin{align}
		\|f\|_0 ={}& \log(1 + |f(+\infty)|) \nonumber \\
		&+ \limsup_{\sigma_0 \to 0^+} \lim_{T \to \infty} \frac{1}{2T} \int_{\sigma_0}^\infty \int_{-T}^T \frac{|f'(\sigma+it)|^2}{|f(\sigma+it)|\bigl(1+|f(\sigma+it)|\bigr)^2} (\sigma - \sigma_0) \, dt \, d\sigma. \label{Dipon-vasu-p2-eqn-010}
	\end{align}
\end{thm}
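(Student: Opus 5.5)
The plan is to prove the two assertions of Theorem~\ref{Dipon-vasu-p2-thm-016} in order. First, the distributional identity for $\Delta U_f$, via the same smoothing used in Lemma~\ref{Dipon-vasu-p2-lem-06}. Second, the Littlewood--Paley identity, via a Riesz/Poisson--Jensen representation of $U_f$ on the strip $S_{\sigma_0,R}$ combined with the vertically averaged Green function of Lemma~\ref{Dipon-vasu-p2-lem-11}. Throughout I write $q_f=|f'|^2/(|f|(1+|f|)^2)$, $\mathcal{A}(f,\sigma)$ as in \eqref{Dipon-vasu-p2-eqn-011}, and $E_f(\rho)$ for the vertical mean of $q_f$ on $\real s=\rho$, as in the proof of Lemma~\ref{Dipon-vasu-p2-lem-06}.

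\emph{Step 1 (distributional Laplacian).} Let $u_\varepsilon=\log(1+\sqrt{|f|^2+\varepsilon^2})$. From that proof we have $u_\varepsilon\to U_f$ uniformly and $\Delta u_\varepsilon\to q_f$ in $L^1_{\mathrm{loc}}(\mathbb{C}_0)$. For a test function $\varphi\in C_c^\infty(\mathbb{C}_0)$ this gives
\[
\int U_f\,\Delta\varphi=\lim_{\varepsilon\to0}\int u_\varepsilon\,\Delta\varphi=\lim_{\varepsilon\to0}\int \Delta u_\varepsilon\,\varphi=\int q_f\,\varphi .
\]
The point to stress is that no Dirac masses appear at the zeros of $f$. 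Near a zero $s_0$ of order $m$, $U_f$ is continuous and behaves like $|s-s_0|^m$, and $q_f=O(|s-s_0|^{m-2})$ is locally integrable. This is exactly the $L^1_{\mathrm{loc}}$ convergence already established, so the limit passage above is justified. (Any constant in the normalisation of $\Delta$ is absorbed consistently with Lemma~\ref{Dipon-vasu-p2-lem-11}.)

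\emph{Step 2 (representation on a strip).} Fix $0<\sigma_0<R$. Since $\sigma_u(f)\le0$, $f$ is bounded on $\overline{\mathbb{C}_{\sigma_0}}$, so $U_f$ is a bounded continuous subharmonic function on $\overline{S_{\sigma_0,R}}$. For bounded subharmonic functions on a strip, the Riesz decomposition (the strip version of Theorem~\ref{Dipon-vasu-p2-thm-015}, obtained by exhausting with rectangles $|t|<T$ and using boundedness to kill the contribution of the horizontal sides) gives
\[
U_f(z)=\int_{\partial S}U_f\,d\omega(z,\cdot)-\frac1{2\pi}\int_{S_{\sigma_0,R}}G_{\sigma_0,R}(z,w)\,q_f(w)\,dA(w).
\]
Put $z=\sigma+it$, average over $|t|<T$, and let $T\to\infty$.
\begin{itemize}
\item In the harmonic-measure term, the strip Poisson kernel integrates in $t$ to the linear weights. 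By almost periodicity of $U_f$ on the boundary lines, this term becomes
\[
\frac{(R-\sigma)\mathcal{A}(f,\sigma_0)+(\sigma-\sigma_0)\mathcal{A}(f,R)}{R-\sigma_0}.
\]
\item In the Green term, Fubini together with \eqref{Dipon-vasu-p2-eqn-008} produces
\[
\int_{\sigma_0}^R\frac{(\min\{\sigma,\rho\}-\sigma_0)(R-\max\{\sigma,\rho\})}{R-\sigma_0}\,E_f(\rho)\,d\rho .
\]
\end{itemize}

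\emph{Step 3 (limits).} First let $R\to\infty$. Since $\mathcal{A}(f,R)\to\log(1+|f(+\infty)|)$, the boundary term tends to $\mathcal{A}(f,\sigma_0)$, and by \eqref{Dipon-vasu-p2-eqn-009} and monotone convergence we obtain
\[
\mathcal{A}(f,\sigma_0)=\mathcal{A}(f,\sigma)+\int_{\sigma_0}^\infty(\min\{\sigma,\rho\}-\sigma_0)E_f(\rho)\,d\rho .
\]
Next let $\sigma\to\infty$. Then $\mathcal{A}(f,\sigma)\to\log(1+|f(+\infty)|)$, and monotone convergence turns the weight into $\rho-\sigma_0$. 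Finally take $\limsup_{\sigma_0\to0^+}$: the left side tends to $\|f\|_0$, which gives \eqref{Dipon-vasu-p2-eqn-010}. Rewriting $\int E_f(\rho)(\rho-\sigma_0)\,d\rho$ as the iterated $\lim_{T}$ expression uses the uniform convergence from Lemma~\ref{Dipon-vasu-p2-lem-06}.

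\emph{Main obstacle.} I expect the hardest step to be the $T\to\infty$ averaging in Step~2 applied to the Green term. There $q_f$ is unbounded near zeros of $f$, so uniform almost periodicity cannot be applied to it directly. My first attempt is to run the whole of Step~2 with the smooth $u_\varepsilon$, whose Laplacian is uniformly almost periodic on $\overline{S_{\sigma_0,R}}$. The Green weights are bounded by Lemma~\ref{Dipon-vasu-p2-lem-11}, and the $t$-averaged convergence $\Delta u_\varepsilon\to q_f$ on $[\sigma_0,R]$ is already used in Lemma~\ref{Dipon-vasu-p2-lem-06}. Hence the order of the limits $\varepsilon\to0$ and $T\to\infty$ can be exchanged, as was done there.
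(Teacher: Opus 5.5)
Your proposal follows essentially the same potential-theoretic route as the paper: the Riesz measure of $U_f$ has density $q_f$ with no point masses at the zeros of $f$, a Poisson--Jensen/Green representation is averaged vertically via Lemma~\ref{Dipon-vasu-p2-lem-11}, and then the successive limits $R\to\infty$, $\sigma\to\infty$, $\limsup_{\sigma_0\to0^+}$ are taken. The differences are minor: you obtain the distributional Laplacian from the smoothing $u_\varepsilon$ instead of citing Ransford, and you work directly on the strip $S_{\sigma_0,R}$ rather than on the rectangles $D_{R,T}$. Your explicit treatment of the vertical averaging of the unbounded density $q_f$ addresses a point the paper passes over, though in both versions it ultimately rests on the limit interchange asserted, not fully proved, in Lemma~\ref{Dipon-vasu-p2-lem-06}.
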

\begin{pf}
	Here $U_f$ is subharmonic on $\mathbb{C}_0$. Outside the zero set $Z(f)$, $$\Delta U_f = |f'|^2 / \bigl(|f|(1+|f|)^2\bigr).$$ Near a zero $s_0$ of multiplicity $m \ge 1$, $|f'|^2 / \bigl(|f|(1+|f|)^2\bigr) = O(|s - s_0|^{m-2})$, which is locally integrable. Hence $\Delta U_f$ holds distributionally on $\mathbb{C}_0$ with Riesz measure
	\begin{equation}\label{eq:riesz-measure-u}
		d\mu_u(s) = \frac{|f'(s)|^2}{|f(s)|\bigl(1 + |f(s)|\bigr)^2} \, dA(s),
	\end{equation}
	possessing no point-masses at $Z(f)$ (see for instance \cite[Theorem 3.7.8]{Ransford-Potential-1995}).
	For $0 < \sigma_0 < R$ and $T > 0$, let $D_{R,T} = \{ \sigma + it : \sigma_0 < \sigma < R, \ |t| < T \}$. Denote 
	\begin{equation*}
		H_{R,T} := \int_{\partial D_{R,T}}U_f(\zeta)\,dw_{D_{R,T}}(z,\zeta),
	\end{equation*}
	which is harmonic in $D_{R,T}$ and $H_{R,T} = U_f$ on $\partial D_{R,T}$.
	Averaging the Green representation formula for $U_f$ on $D_{R,T}$ vertically yields
	\begin{equation}\label{eq:green-rep-u}
		U_{R,T}(\sigma) = H_{R,T}(\sigma) - \frac{1}{2\pi} \int_{D_{R,T}} \overline{G}_{R,T}(\sigma, w) \, \Delta U_f(w) \, dA(w),
	\end{equation}
	where $U_{R,T}(\sigma) = \frac{1}{2T} \int_{-T}^T U_f(\sigma + it) \, dt$ and $\overline{G}_{R,T}$ is the vertically averaged Green function of $D_{R,T}$. Moreover, averaging the Green's function in the vertical direction gives
	\begin{equation*}
		G_{\sigma_0, R}(\sigma, \rho) = \int_{\mathbb{R}} G_{\sigma_0, R}(\sigma + it, \rho) \, dt.
	\end{equation*}
	Then by Lemma \ref{Dipon-vasu-p2-lem-11}, we obtain as $R\to\infty$
	\begin{equation*}
		G_{\sigma_0, R}(\sigma, \rho) \rightarrow 2\pi (\min\{\sigma, \rho\} - \sigma_0).
	\end{equation*} 
	Now the Poisson--Jensen formula, after vertical averaging and passage $T \to \infty$, gives
	\begin{equation}\label{Dipon-vasu-p2-eqn-020}
		U_f(\sigma) = \frac{R-\sigma}{R-\sigma_0} U_f(\sigma_0) + \frac{\sigma-\sigma_0}{R-\sigma_0} U_f(R) - \int_{\sigma_0}^R K_{\sigma_0, R}(\sigma, \rho)\, V(\rho) \, d\rho,
	\end{equation}
	where
	\begin{equation*}
		V(\rho) = \lim_{T \to \infty} \frac{1}{2T} \int_{-T}^T \frac{|f'(\rho + it)|^2}{|f(\rho + it)|(1 + |f(\rho + it)|)^2} \, dt
	\end{equation*}
	and
	\begin{equation*}
		K_{\sigma_0, R}(\sigma, \rho) = \frac{(\min\{\sigma, \rho\} - \sigma_0)(R - \max\{\sigma, \rho\})}{R - \sigma_0}.
	\end{equation*}
	Letting $R \to \infty$, using $U_f(R) \longrightarrow \log(1 + |f(+\infty)|),$ we obtain from \eqref{Dipon-vasu-p2-eqn-020}
	\begin{equation}
		U_f(\sigma) = U_f(\sigma_0) - \int_{\sigma_0}^\infty (\min\{\sigma, \rho\} - \sigma_0) V(\rho) \, d\rho.
	\end{equation}
	More conveniently, letting $\sigma\to\infty$ and substituting the definition of $V$, we have 
		\begin{align}
		U_f(\sigma_0) ={}& \log(1 + |f(+\infty)|) \nonumber \\
		&+  \lim_{T \to \infty} \frac{1}{2T} \int_{\sigma_0}^\infty \int_{-T}^T \frac{|f'(\sigma+it)|^2}{|f(\sigma+it)|\bigl(1+|f(\sigma+it)|\bigr)^2} (\sigma - \sigma_0) \, dt \, d\sigma. 
	\end{align}
	Finally from the definition of $\mathcal{N}_u$, 
	\begin{equation*}
		\norm{f}_0 = \limsup_{\sigma_0 \to 0^+}U_f(\sigma_0)
	\end{equation*}
	and \eqref{Dipon-vasu-p2-eqn-010} follows. This concludes the proof.
	
\end{pf}

\subsubsection{Littlewood-Paley type formula \eqref{Dipon-vasu-p2-eqn-004} in terms of the vertical limit functions}
To establish our new Littlewood-Paley type formula  for the class $\mathcal{N}_u$ in terms of the vertical limit functions $f_\chi$ ($\chi \in \mathbb{T}^\infty$), we require the following lemma:

 \begin{lem}\cite{BrevigKouro-TAMS-2025}\label{Dipon-vasu-p2-lem-04}
	Let $\mu$ be a finite Borel measure on $\mathbb{R}$ and $\mathcal{F}\in L^1(\mathbb{T}^\infty).$ Then 
	\begin{equation*}
		\norm{\mathcal{F}}_{L^1(\mathbb{T}^\infty)}\mu(\mathbb{R}) = \int_{\mathbb{T}^{\infty}}\int_{\mathbb{R}}\mathcal{F} (\mathcal{T}_t\chi)\,d\mu(t)\,dm_\infty(\chi).
	\end{equation*}
\end{lem}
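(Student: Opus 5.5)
The identity is Fubini's theorem combined with the invariance of Haar measure under the Kronecker flow. We may assume $\mu(\mathbb{R})>0$; otherwise both sides vanish.

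\textbf{Step 1: reduce to $\mathcal{F}\ge 0$.} Write $\mathcal{F}$ in terms of positive and negative parts of its real and imaginary parts. Note, however, that the left-hand side as written, $\norm{\mathcal{F}}_{L^1(\mathbb{T}^\infty)}\mu(\mathbb{R})$, is the integral of $|\mathcal{F}|$ rather than of $\mathcal{F}$. So we prove the identity for $|\mathcal{F}|$ in place of $\mathcal{F}$; equivalently, for nonnegative $\mathcal{F}$. This is the only form used later, with $\mathcal{F}=\log(1+|(\mathcal{B}f)^*|)$. The signed version $\int\mathcal{F}\,dm_\infty\,\mu(\mathbb{R})$ then follows by linearity.

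\textbf{Step 2: joint measurability.} The map $(t,\chi)\mapsto \mathcal{T}_t\chi=(p_j^{-it}\chi_j)_j$ is continuous from $\mathbb{R}\times\mathbb{T}^\infty$ to $\mathbb{T}^\infty$, with the product topology. Hence $(t,\chi)\mapsto\mathcal{F}(\mathcal{T}_t\chi)$ is Borel measurable on $\mathbb{R}\times\mathbb{T}^\infty$ when $\mathcal{F}$ is Borel. If $\mathcal{F}$ is only $m_\infty$-measurable, first replace it by a Borel representative. The composition changes only on a set whose $t$-sections are $m_\infty$-null, by Step 3, so the right-hand side is unaffected.

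\textbf{Step 3: invariance of $m_\infty$.} For each fixed $t$, $\mathcal{T}_t$ is multiplication by the fixed element $(p_j^{-it})_j$ of the compact group $\mathbb{T}^\infty$. Haar measure is translation invariant, so
\[
\int_{\mathbb{T}^\infty}\mathcal{F}(\mathcal{T}_t\chi)\,dm_\infty(\chi)=\int_{\mathbb{T}^\infty}\mathcal{F}\,dm_\infty \quad \text{for every } t .
\]
This also shows that $\mathcal{T}_t$ maps null sets to null sets, which justifies the reduction in Step 2.

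\textbf{Step 4: Tonelli.} Since $\mu$ is finite and $m_\infty$ is a probability measure, both are $\sigma$-finite, and Tonelli's theorem gives
\[
\int_{\mathbb{T}^\infty}\int_{\mathbb{R}}\mathcal{F}(\mathcal{T}_t\chi)\,d\mu(t)\,dm_\infty(\chi)=\int_{\mathbb{R}}\int_{\mathbb{T}^\infty}\mathcal{F}(\mathcal{T}_t\chi)\,dm_\infty(\chi)\,d\mu(t)=\norm{\mathcal{F}}_{L^1}\,\mu(\mathbb{R}).
\]
The finiteness of the result shows, a posteriori, that $t\mapsto\mathcal{F}(\mathcal{T}_t\chi)$ lies in $L^1(\mu)$ for $m_\infty$-a.e.\ $\chi$.

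The main obstacle is the measurability point in Step 2, namely handling non-Borel representatives. It is settled by the null-set invariance from Step 3 applied inside Tonelli's theorem. Everything else is routine.
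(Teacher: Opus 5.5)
Your proof is correct and follows essentially the same route as the cited source. The paper gives no proof of its own, but it relies on exactly this mechanism (invariance of $m_\infty$ under each rotation $\mathcal{T}_t$, then Fubini--Tonelli) in the proofs of Lemmas~\ref{Dipon-vasu-p2-lem-09} and~\ref{Dipon-vasu-p2-lem-05}, and your measurability remarks supply details left implicit there. You are also right to read the identity with $|\mathcal{F}(\mathcal{T}_t\chi)|$, or for $\mathcal{F}\ge 0$ (the only case the paper uses): as printed it fails for signed $\mathcal{F}$, since $\mathcal{F}(\chi)=\chi_1$ gives $\mu(\mathbb{R})$ on the left while the right side vanishes.
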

 
 Here, we introduce the \textit{conformally invariant Nevanlinna class} on the  half-plane $\mathbb{C}_0$, denoted by $N(\mathbb{C}_0)$. 
 \begin{defn}\label{Dipon-vasu-p2-defn-01}
 	A holomorphic function $f$ on $\mathbb{C}_0$ belongs to $N(\mathbb{C}_0)$ if $f \circ \Psi \in N(\mathbb{D})$, where$$\Psi(z) = \frac{1+z}{1-z}$$is the Cayley transformation mapping the open unit disk $\mathbb{D}$ onto $\mathbb{C}_0$.
 \end{defn}
  Furthermore, let $\mu$ be the probability measure on $\mathbb{R}$ defined by$$d\mu(t) = \frac{1}{\pi(1+t^2)}\,dt,$$which is the pushforward of the normalized Lebesgue (Haar) measure on the unit circle $\mathbb{T}$ under the boundary map of $\Psi$. Moreover we can write
 \begin{equation}\label{Dipon-vasu-p2-eqn-029}
 	\int_{\mathbb{R}}\log(1+|f(it)|)\,d\mu(t) = \frac{1}{2\pi}\int_{-\pi}^{\pi}\log(1+|f\circ\Psi(e^{i\theta})|)\,d\theta.
 \end{equation}
 More precisely, for a planar domain $D \subset \mathbb{C}$, the class $N(D)$ is conformally invariant in the following sense: if $\Psi \colon D \to D'$ is a conformal equivalence, and $p' = \Psi(p)$ is the basepoint used to define the metric on $N(D')$, then the composition operator $C_\Psi \colon N(D') \to N(D)$, defined by $f \mapsto f \circ \Psi$ is an isometric isomorphism.

\begin{lem}\label{Dipon-vasu-p2-lem-05}
	Let $\mu$ be a finite Borel measure on $\mathbb{R}$. Then 
	\begin{equation*}
		\norm{f}_0\mu(\mathbb{R}) = \int_{\mathbb{T}^{\infty}}\int_{\mathbb{R}}\log(1+|f_\chi(it)|)\,d\mu(t)\,dm_\infty(\chi).
	\end{equation*}
\end{lem}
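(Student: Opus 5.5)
Apply Lemma \ref{Dipon-vasu-p2-lem-04} to the function $\mathcal{F}(\chi) := \log(1+|f^*(\chi)|)$ on $\mathbb{T}^\infty$, where $f^*$ is the generalized boundary value from Lemma \ref{Dipon-vasu-p2-lem-09}. By \eqref{Dipon-vasu-p2-eqn-032}, $\mathcal{F}\in L^1(\mathbb{T}^\infty)$ with $\|\mathcal{F}\|_{L^1(\mathbb{T}^\infty)}=\|f\|_0$. Lemma \ref{Dipon-vasu-p2-lem-04} then gives
\begin{equation*}
\|f\|_0\,\mu(\mathbb{R}) = \int_{\mathbb{T}^\infty}\int_{\mathbb{R}} \log\bigl(1+|f^*(\mathcal{T}_t\chi)|\bigr)\,d\mu(t)\,dm_\infty(\chi).
\end{equation*}
To reach the stated formula, we only need to identify $f^*(\mathcal{T}_t\chi)$ with $f_\chi(it)$ for $m_\infty\times\mu$-almost every $(\chi,t)$.

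For this identification I will follow the proof of Lemma \ref{Dipon-vasu-p2-lem-09}. For every $\sigma>0$ and every $\chi$, the Dirichlet series gives $f_{\mathcal{T}_t\chi}(\sigma)=\sum a_n\chi(n)n^{-it}n^{-\sigma}=f_\chi(\sigma+it)$, using $\sigma_u(f)\le0$. Letting $\sigma\to0^+$, the left side tends to $f^*(\mathcal{T}_t\chi)$ wherever that limit exists, and the right side defines the boundary value $f_\chi(it)$. Let $E$ be the null set where the limit in \eqref{Dipon-vasu-p2-eqn-035} fails. The set $\{(\chi,t):\mathcal{T}_t\chi\in E\}$ is Borel, and by Fubini together with the invariance of $m_\infty$ under $\mathcal{T}_t$ it has $m_\infty\times\mu$ measure $\int_{\mathbb{R}}m_\infty(\mathcal{T}_t^{-1}E)\,d\mu(t)=0$. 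So for a.e. $\chi$, the equality $f^*(\mathcal{T}_t\chi)=\lim_{\sigma\to0^+}f_\chi(\sigma+it)$ holds for $\mu$-a.e. $t$. This is exactly the (horizontal) boundary value of $f_\chi\in N(\mathbb{C}_0)$, which is what $f_\chi(it)$ means in the statement.

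Substituting this into the display gives the lemma. Measurability of $(\chi,t)\mapsto\log(1+|f^*(\mathcal{T}_t\chi)|)$ follows from joint continuity of the flow, and Tonelli applies because the integrand is nonnegative. So no integrability issue arises beyond $\mathcal{F}\in L^1$, which is \eqref{Dipon-vasu-p2-eqn-032}.

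The main obstacle is the identification step. We must make sure that the radial limit along the Kronecker flow, obtained through Theorem \ref{Dipon-vasu-p2-thm-07}, agrees with the horizontal limit defining $f_\chi(it)$. If the paper's $f^*$ is taken via $\mathcal{B}f$'s radial limit $(\mathcal{B}f)^*$ rather than via \eqref{Dipon-vasu-p2-eqn-035}, I would first reduce to polynomials or to $f_\varepsilon$. For $f_\varepsilon$ the identity $\|f_\varepsilon\|_0\mu(\mathbb{R})=\iint\log(1+|f_{\varepsilon,\chi}(it)|)\,d\mu\,dm_\infty$ is immediate, since $\mathcal{B}f_\varepsilon$ is continuous on $\mathbb{T}^\infty$. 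I would then pass $\varepsilon\to0^+$: the left side converges by the monotonicity $\|f_\varepsilon\|_0\uparrow\|f\|_0$. On the right side, $\liminf_{\varepsilon\to0^+}$ is at least the limit integral by Fatou, and $\limsup$ is at most it by subharmonicity. Indeed, $\log(1+|f_\chi|)$ is dominated on $\mathbb{C}_0$ by the Poisson integral of its boundary values, because $f_\chi\in N(\mathbb{C}_0)$, and Poisson integrals preserve $\mu$-averages in the limit.
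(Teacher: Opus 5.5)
The step that fails is the norm identity $\|\mathcal{F}\|_{L^1(\mathbb{T}^\infty)}=\|f\|_0$, i.e.\ \eqref{Dipon-vasu-p2-eqn-032}, together with the claim in your last paragraph that is meant to secure it. Membership $f_\chi\in N(\mathbb{C}_0)$ does \emph{not} imply that $\log(1+|f_\chi|)$ is dominated by the Poisson integral of its boundary values. That domination is the Smirnov-class property. For a general Nevanlinna function, the least harmonic majorant of $\log(1+|f_\chi|)$ can carry a singular part that is invisible in the boundary limit.

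The paper's own example shows this cannot be repaired: take $f=e^{g}$ with $g(s)=\frac{1+2^{-s}}{1-2^{-s}}$. Then $f_\chi(s)=F(\chi_1 2^{-s})$ with $F(z)=\exp\bigl(\frac{1+z}{1-z}\bigr)$. So $|f_\chi(it)|=1$ whenever $\chi_1\neq 2^{it}$, hence for $m_\infty\times\mu$-a.e.\ $(\chi,t)$, and the right-hand side of the lemma equals $\mu(\mathbb{R})\log 2$. On the other hand, set $P=\operatorname{Re} g$, a Poisson kernel with vertical mean $1$. Then $\log(1+e^{P})=P+\log(1+e^{-P})$, and $\log(1+e^{-P})\to\log 2$ boundedly a.e.\ as $\sigma\to0^+$, so $\|f\|_0=1+\log 2$. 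Thus, with $f_\chi(it)$ read as a boundary value, neither \eqref{Dipon-vasu-p2-eqn-032} nor the lemma holds on all of $\mathcal{N}_u$. Your $\limsup$ bound cannot be obtained by any argument.

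What your $f_\varepsilon$ scheme does give in general is the Fatou inequality $\int_{\mathbb{T}^\infty}\int_{\mathbb{R}}\log(1+|f_\chi(it)|)\,d\mu(t)\,dm_\infty(\chi)\le\|f\|_0\,\mu(\mathbb{R})$. Equality is equivalent (by Vitali/Scheff\'e) to uniform integrability of $\{\log(1+|\mathcal{B}f_\varepsilon|)\}_{\varepsilon>0}$ on $\mathbb{T}^\infty$, a Smirnov-type condition that must be added as a hypothesis.

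When $\mathcal{B}f$ is continuous on $\mathbb{T}^\infty$, your computation at $\varepsilon=0$ is a complete proof. This covers Dirichlet polynomials, which is the case used in the proof of Theorem \ref{Dipon-vasu-p2-thm-08}. In that setting your argument coincides with the paper's: Lemma \ref{Dipon-vasu-p2-lem-04} applied to $\log(1+|f_\chi(0)|)$, together with \eqref{Dipon-vasu-p2-eqn-003}. The paper's proof has the same limitation for general $f$.

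Your Fubini argument on $\{(\chi,t):\mathcal{T}_t\chi\in E\}$ is sound. It identifies $f^*(\mathcal{T}_t\chi)$ with $f_\chi(it)$ for arbitrary, possibly singular, $\mu$, and does so more cleanly than the paper's appeal to Birkhoff--Khinchin. One small point: with the paper's convention $\mathcal{T}_t\chi=(p_j^{it}\chi_j)_j$ one gets $f_{\mathcal{T}_t\chi}(\sigma)=f_\chi(\sigma-it)$, which is harmless after reflecting $\mu$.
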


\begin{pf}
 Vertical translations generate the flow of an ergodic rotation on the polytorus $\mathbb{T}^\infty$. For $t \in \mathbb{R}$, we define the flow$$\mathcal{T}_t\chi = (2^{it}\chi_1, \dots, p_j^{it}\chi_j, \dots), \quad \chi \in \mathbb{T}^\infty,$$ where $p_j$ is the $j$-th prime number. Setting $\mathcal{F}(\chi) = \log(1+\vert{}f_\chi(0)\vert{})$ as in Lemma \ref{Dipon-vasu-p2-lem-04}, the Birkhoff--Khinchin ergodic theorem \cite{Queffelec-TRM-2020} implies that for almost all $\chi$,
 \begin{equation*}\lim_{T\to\infty}\frac{1}{2T}\int_{-T}^{T}\log(1+|f_\chi(it)|)\,dt = \int_{\mathbb{T}^{\infty}}\log(1+|\mathcal{B}f|)\,dm_\infty = \norm{f}_0.\end{equation*}
 The result then follows by Fubini's theorem.
\end{pf}

\begin{thm}\label{Dipon-vasu-p2-thm-08}
	Let $\mu$ be a probability measure on $\mathbb{R}$ and  $f(s)=\sum_{n\ge 1}a_nn^{-s}$ be a Dirichlet series in $\mathcal{N}_u$. Then, for $f\not\equiv 0$, we have
	\begin{equation*}
			\norm{f}_{0}\asymp \log(1+ |f_\chi(+\infty)|) + 4\int_{\mathbb{T}^{\infty}}\int_{0}^{\infty}\int_{\mathbb{R}}\frac{|f_\chi'(\sigma+it)|^2}{|f_\chi(\sigma+it)|(1+|f_\chi(\sigma+it)|)^2}\,\sigma \,d\mu(t\,) \,d\sigma\, dm_\infty(\chi).
	\end{equation*}
\end{thm}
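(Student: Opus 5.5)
Here is the plan: apply a half-plane Littlewood--Paley identity to each vertical limit function $f_\chi$, integrate over $\mathbb{T}^\infty$, and use the ergodic averaging of Lemma \ref{Dipon-vasu-p2-lem-05} to collapse the left-hand side to $\norm{f}_0$. Note that $f_\chi(+\infty)=a_1\chi(1)=a_1=f(+\infty)$ for every $\chi$, so the constant term does not depend on $\chi$.

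The first step is to fix $\chi$ in the full-measure set where Lemma \ref{Dipon-vasu-p2-lem-09} holds. There $f_\chi\in N(\mathbb{C}_0)$ and has boundary values $f^*(\chi)$ along the Kronecker flow. For $\varepsilon>0$ put $g=f_\chi(\cdot+\varepsilon)$; this is bounded and holomorphic on $\overline{\mathbb{C}_0}$. Apply the Green/Poisson--Jensen representation (Theorem \ref{Dipon-vasu-p2-thm-015}, in the form used for Theorem \ref{Dipon-vasu-p2-thm-016}) to $U_g=\log(1+|g|)$ on $\mathbb{C}_0$, with base point $1$ (equivalently, on $\mathbb{D}$ via the Cayley map $\Psi$ and \eqref{Dipon-vasu-p2-eqn-029}). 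The Riesz measure of $U_g$ is $|g'|^2/(|g|(1+|g|)^2)\,dA$ by Theorem \ref{Dipon-vasu-p2-thm-016}. This gives
\[
\int_{\mathbb{R}}U_g(it)\,d\mu_P(t)=U_g(1)+\frac{1}{2\pi}\int_{\mathbb{C}_0}G(1,s)\,\Delta U_g(s)\,dA(s),
\]
where $\mu_P$ is the Cauchy/Poisson measure and $G(1,s)=\log\bigl|\frac{1+s}{1-s}\bigr|$ is the half-plane Green function. Here $\mu$ is an arbitrary probability measure, not necessarily $\mu_P$. Following Brevig et al.\ \cite{BrevigKouro-TAMS-2025}, I would therefore use the translation invariance of Haar measure (Lemma \ref{Dipon-vasu-p2-lem-04}) to replace the base point $1$ by $\sigma+i\tau$, average $\tau$ against $\mu$, and integrate in $\chi$. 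By Lemma \ref{Dipon-vasu-p2-lem-04}, after the $\chi$-integration the distribution in $t$ is irrelevant. Using $\int_{\mathbb{R}}G(\sigma+it,\rho)\,dt=2\pi\min\{\sigma,\rho\}$ (Lemma \ref{Dipon-vasu-p2-lem-11} with $\sigma_0=0$, $R\to\infty$) and letting $\sigma\to\infty$ yields the weight $\rho$ in the $\chi$-averaged area integral. Lemma \ref{Dipon-vasu-p2-lem-05} turns the left side into $\norm{g}_0=\mathcal{A}(f,\varepsilon)$.

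The second step is to remove $\varepsilon$. Lemma \ref{Dipon-vasu-p2-lem-06} gives monotone convergence $\mathcal{A}(f,\varepsilon)\uparrow\norm{f}_0$. On the right-hand side, the integrand at level $\varepsilon$ involves $(\sigma-\varepsilon)_+$ times the density of $f_\chi$, which increases to $\sigma$ times the density. The monotone convergence theorem on $\mathbb{T}^\infty\times(0,\infty)\times\mathbb{R}$ then yields the limit, exactly as in the $(\rho-\varepsilon)_+$ argument in the proof of Theorem \ref{Dipon-vasu-p2-thm-04}.

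The constant $4$ and the use of $\asymp$ rather than equality come from normalization: the relation $\Delta=4\partial\bar\partial$, together with the factor $2\pi$ against $1/(2\pi)$ and the $1/2$ from $|f|^2$ versus $|f|$. I would therefore prove the two-sided estimate with explicit absolute constants rather than chase the exact identity. The step I expect to be the main obstacle is justifying the Fubini/Tonelli interchange and the passage to the unbounded half-plane for almost every $\chi$, since $f_\chi$ is only in $N(\mathbb{C}_0)$. I would handle this by working first with $g=f_\chi(\cdot+\varepsilon)$, which is bounded, and on truncated rectangles $D_{R,T}$. For these the boundary contributions on the horizontal sides vanish after averaging, by the bound $|\partial_tU|\le|g'|$ used in Lemma \ref{Dipon-vasu-p2-lem-06}. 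Positivity of all integrands then allows Tonelli throughout.
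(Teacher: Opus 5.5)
Your argument is correct and follows the paper's skeleton: a Green/Littlewood--Paley formula for each $f_\chi$ with a Poisson base point, integration over $\mathbb{T}^\infty$ via Lemmas~\ref{Dipon-vasu-p2-lem-04} and~\ref{Dipon-vasu-p2-lem-05}, and then the base point sent to $+\infty$. The differences are in execution, and they favour your version.

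The paper pulls back to $\mathbb{D}$ by $\Psi_\xi$, applies the Choa--Kim identity, and replaces $\log(1/|z|)$ by $1-|z|^2$ for each fixed $\chi$. Only $1-|z|^2\le 2\log(1/|z|)$ holds pointwise. A two-sided comparison appears only after integrating in $t$, where the two weights become $2\pi\min\{\xi,\sigma\}$ and $4\pi\xi\sigma/(\xi+\sigma)$, comparable within a factor $2$. The paper's proof is also written only for Dirichlet polynomials and never passes to general $f$. Those polynomials are not $d_0$-dense in $\mathcal{N}_u$, since their closure is $\mathcal{N}_*$.

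You keep the exact half-plane Green function and integrate in $\chi$ before using $\int_{\mathbb{R}}G(\sigma+it,\rho)\,dt=2\pi\min\{\sigma,\rho\}$. With the shift $f_\chi(\cdot+\varepsilon)$ and monotone convergence, this gives, for every $f\in\mathcal{N}_u$, the exact identity
\[
\norm{f}_0=\log(1+|f(+\infty)|)+\int_{\mathbb{T}^\infty}\int_0^\infty\int_{\mathbb{R}}\frac{|f_\chi'(\sigma+it)|^2}{|f_\chi(\sigma+it)|(1+|f_\chi(\sigma+it)|)^2}\,\sigma\,d\mu(t)\,d\sigma\,dm_\infty(\chi).
\]
The stated estimate then follows as $\norm{f}_0\le\mathrm{RHS}\le 4\norm{f}_0$. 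So the $4$ is not a normalization effect, as you suggest: with $\Delta=\partial_\sigma^2+\partial_t^2$ the density is exactly $\Delta\log(1+|f|)$, and the constant is $1$.

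Some of your machinery can be dropped:
\begin{itemize}
\item You need neither Lemma~\ref{Dipon-vasu-p2-lem-09} nor $f_\chi\in N(\mathbb{C}_0)$. Since $\sigma_u(f_\chi)=\sigma_u(f)\le0$, the function $f_\chi(\cdot+\varepsilon)$ is bounded and holomorphic near $\overline{\mathbb{C}_0}$ for \emph{every} $\chi$.
\item The truncated rectangles are unnecessary. The Cayley transform, with dominated convergence on the circles $|z|=r$, handles the unbounded domain.
\item Even Lemma~\ref{Dipon-vasu-p2-lem-06} is dispensable. Your level-$\varepsilon$ identity, $\mathcal{A}(f,\varepsilon)=\log(1+|f(+\infty)|)+\int_0^\infty(\rho-\varepsilon)_+E(\rho)\,d\rho$ with $E(\rho)=\int_{\mathbb{T}^\infty}|f_\chi'(\rho)|^2/(|f_\chi(\rho)|(1+|f_\chi(\rho)|)^2)\,dm_\infty(\chi)$, already shows monotonicity in $\varepsilon$.
\end{itemize}

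One point needs tidying. $\mu$ enters only at the very end: apply Lemma~\ref{Dipon-vasu-p2-lem-04}, or just Tonelli since the integrand is nonnegative, to rewrite $E(\rho)$ as the $(\chi,t)$-integral. It does not come from averaging the base point over $\mu$.
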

\begin{pf}
	By the virtue of the Littlewood--Paley identity for the classical Nevanlinna class $N(\mathbb{D})$ by Choa and Kim \cite{ChoaKim-PAMS-1997}, we get an analogues identity
	\begin{equation*}
		\norm{g}_{N(\mathbb{D})} = \log(1+|g(0)|) + 2\iint_{\mathbb{D}}\frac{|g'(z)|^2}{|g(z)|(1+|g(z)|)^2}\log(1/|z|)\,dA(z), \quad g \in N(\mathbb{D}),
	\end{equation*}
	where $$\Vert{}g\Vert{}_{N(\mathbb{D})} := \lim_{r\to 1^-}\int_{0}^{2\pi} \log(1+\vert{}g(re^{i\theta})\vert{})\,\frac{d\theta}{2\pi}$$ is the standard Nevanlinna quasi-norm and $dA(z)$ denotes Lebesgue area measure on $\mathbb{D}$. Let $f$ be a Dirichlet polynomial. For a fixed parameter $\xi > 0$, the Cayley transformation$$\Psi_\xi(z) = \xi\frac{1+z}{1-z}$$maps $\mathbb{D}$ bijections onto $\mathbb{C}_0$, and its inverse is given by$$\Psi_\xi^{-1}(s) = \frac{s-\xi}{s+\xi}.$$
By Lemma \ref{Dipon-vasu-p2-lem-05}, we have
\begin{equation}\label{Dipon-vasu-p2-eqn-030}
	\norm{f}_0 = \int_{\mathbb{T}^{\infty}}\left( \int_{\mathbb{R}}\log(1+|f_\chi(it)|)\,\frac{\xi}{\pi(\xi^2+t^2)}\,dt \right) dm_\infty(\chi).
\end{equation}
For a fixed character $\chi \in \mathbb{T}^\infty$, \eqref{Dipon-vasu-p2-eqn-029} yields
	\begin{align*}
		\int_{\mathbb{R}}&\log(1+|f_\chi(it)|)\,\frac{\xi}{\pi(\xi^2+t^2)}dt \\ &= \norm{f_\chi\circ\Psi_\xi}_{N(\mathbb{D})} \\ &\asymp \log(1+|f_\chi\circ\Psi_\xi(0)|) + 4\iint_{\mathbb{D}}(1-|z|^2)\frac{|f'_\chi\circ\Psi_\xi(z)|^2}{|f_\chi\circ\Psi_\xi(z)|(1+|f_\chi\circ\Psi_\xi(z)|)^2}|\Psi'_\xi(z)|^2\,dA(z).
	\end{align*}
	Applying the change of variables $s=\sigma+it=\Psi_\xi(z)$, we obtain
	\begin{align*}
		\int_{\mathbb{R}}&\log(1+|f_\chi(it)|)\,\frac{\xi}{\pi(\xi^2+t^2)}dt \\ &\asymp \log(1+|f_\chi(\xi)|) + 4\int_{0}^{\infty}\int_{\mathbb{R}}\left(1-\frac{|s-\xi|^2}{|s+\xi|^2}\right)\frac{|f'_\chi(s)|^2}{|f_\chi(s)|(1+|f_\chi(s)|)^2}\,dt\,d\sigma \\ & \asymp \log(1+|f_\chi(\xi)|) + 4\int_{0}^{\infty}\int_{\mathbb{R}}\frac{\sigma\xi}{((\sigma+\xi)^2+t^2)}\frac{|f'_\chi(s)|^2}{|f_\chi(s)|(1+|f_\chi(s)|)^2}\,dt\,d\sigma.
	\end{align*}
	Substituting \eqref{Dipon-vasu-p2-eqn-030} and integrating over the polytorus $\mathbb{T}^\infty$, we obtain
	\begin{align*}
		\norm{f}_0 & \asymp \int_{\mathbb{T}^{\infty}}\log(1+|f_\chi(\xi)|)\,dm_\infty(\chi) \\ & + 4\int_{0}^{\infty}\frac{\sigma\xi}{\sigma+\xi}\int_{\mathbb{T}^{\infty}}\int_{\mathbb{R}}\frac{|f'_\chi(s)|^2}{|f_\chi(s)|(1+|f_\chi(s)|)^2} \frac{\sigma+\xi}{\pi(\sigma+\xi)^2+t^2}\,dt\,dm_\infty(\chi)\,d\sigma.
	\end{align*}
	By virtue of Lemma \ref{Dipon-vasu-p2-lem-04} applied to the probability measure $d\mu(t) = \frac{\sigma+\xi}{\pi(\sigma+\xi)^2+t^2}\,dt$, together with Fubini's theorem, we obtain
    \begin{align*}
    	\norm{f}_0 & \asymp \int_{\mathbb{T}^{\infty}}\log(1+|f_\chi(\xi)|)\,dm_\infty(\chi) \\ & + 4\int_{0}^{\infty}\frac{\sigma\xi}{\sigma+\xi}\int_{\mathbb{T}^{\infty}}\int_{\mathbb{R}}\frac{|f'_\chi(s)|^2}{|f_\chi(s)|(1+|f_\chi(s)|)^2}\,d\mu(t)\,dm_\infty(\chi)\,d\sigma.
    \end{align*}
   Applying Fubini's theorem to interchange the integration with respect to $\sigma$ and the Haar measure $m_\infty$, and taking the limit as $\xi \to \infty$, we arrive at the desired result.
\end{pf}

\begin{rem}
Furthermore, it follows from the foundational work of Brevig and Perfekt \cite[Lemma 3.4]{Brevig-AdvMath-2021} that
\begin{equation*}
	\norm{f_\chi}_0 = \norm{f}_0 \quad\text{and}\quad \log(1+|f_\chi(+\infty)|) = \log(1+|f(+\infty)|)
\end{equation*}
for every character $\chi \in \mathbb{T}^{\infty}$. Applying an argument analogous to that of \cite[Theorem 4.2]{BrevigKouro-TAMS-2025}, we obtain
\begin{equation}\label{Dipon-vasu-p2-eqn-031}
	\norm{f}_{0}= \log(1+ |f(+\infty)|) + \lim_{T\to\infty}\frac{2}{T}\int_{-T}^{T}\int_{0}^{\infty}\frac{|f_\chi'(\sigma+it)|^2}{|f_\chi(\sigma+it)|(1+|f_\chi(\sigma+it)|)^2}\,\sigma \,d\sigma\, dt.
\end{equation}
Alternatively, formula \eqref{Dipon-vasu-p2-eqn-031} can be established by applying the Birkhoff--Khinchin ergodic theorem (see for instance \cite{Bayart-monatsh-2002,Queffelec-TRM-2020}).

\end{rem}

\section{Compact composition operators on $\mathcal{N}_u$}\label{Dipon-vasu-p2-sec-4}

The theory of composition operators is a rich and interdisciplinary area of research, exhibiting deep connections with complex analysis, linear dynamics, complex geometry, and functional analysis. The composition operator on the Hardy space of Dirichlet series $\mathcal{H}^p$, for $0<p\le\infty$, has been studied by several authors (see for instance \cite{Bayart-monatsh-2002,Bayart-IJ-2003,Gordon-Michigan-1999,Hedenmalm-Duke-1997,Athanos-JLMS-2023}). By Bohr's classical point of view \cite{Bohr-Acta-1913}, $\mathcal{H}^p$ is identified with the Hardy space $\mathbb{H}^p(\mathbb{T}^\infty)$ on the infinite-dimensional torus. 
According to a theorem of Cole and Gamelin \cite{Gamelin-PLMS-1986}, point evaluation at $z$ defines a bounded linear functional on $\mathbb{H}^p(\mathbb{T}^{\infty})$ if and only if $z \in \mathbb{D}_2^\infty = \mathbb{D}^\infty \cap \ell^2$. Guo \textit{et al.} \cite[Theorem 4.1]{GuoZhou-AnnalFourier-2025} proved that for a Dirichlet series $f$ in $\mathcal{N}_u$, it's corresponding Bohr lift $\mathcal{B}f$ converges in $\mathbb{D}^\infty_1.$ Moreover $\norm{\mathcal{B}f}_0=\norm{f}_0.$ Guo \textit{et al.} \cite{GuoZhou-AnnalFourier-2025} showed that for any $\zeta \in \mathbb{D}^\infty_1$, the Bohr lift $\mathcal{B}f$ of a Dirichlet series $f$ satisfies the inequality
\begin{equation}\label{Dipon-vasu-p2-eqn-038}
	\log(1+|\mathcal{B}f(\zeta)|) \le \|\mathbf{P}_\zeta\|_\infty \|\mathcal{B}f\|_0,
\end{equation}
where $\mathbf{P}_\zeta$ is defined in \eqref{Dipon-vasu-p2-eqn-037}. In fact, the family $\{\mathbf{P}_\zeta\}_{\zeta \in V_{r,M}}$ is uniformly bounded in $L^\infty(\mathbb{T}^\infty)$ for each $\zeta \in V_{r,M}$, where $V_{r,M}$ is a $\ell^1$ subdomain defined in \eqref{Dipon-vasu-p2-eqn-012}. Consequently, there exists a constant $K > 0$ such that \eqref{Dipon-vasu-p2-eqn-038} reduces to
\begin{equation}\label{Dipon-vasu-p2-eqn-039}
	\log(1+|\mathcal{B}f(\zeta)|) \le K \|\mathcal{B}f\|_0.
\end{equation}
For $s \in \mathbb{C}_1$, setting $\zeta = (2^{-s}, 3^{-s}, \dots) \in \mathbb{D}^\infty_1$ ensures that $\mathcal{B}f \in N(\mathbb{D}^\infty_1)$. Applying \eqref{Dipon-vasu-p2-eqn-039} then yields
\begin{equation*}
	\log(1+|f(s)|) \le K \|f\|_0,
\end{equation*}
which proves the boundedness of the point evaluation functional at $s \in \mathbb{C}_1$ for Dirichlet series in the class $\mathcal{N}_u$.

Motivated by the seminal work of Gordon and Hedenmalm \cite{Gordon-Michigan-1999}, we obtain the following analogous result, whose proof is omitted as it follows directly without additional effort. Throughout this section, $\mathcal{D}$ denotes the space of functions on a half-plane that can be represented by a convergent Dirichlet series.

   \begin{thm}\label{Dipon-vasu-p2-thm-02}
		 A mapping $\Phi : \mathbb{C}_1 \to \mathbb{C}_1$ induces a bounded composition operator $C_\Phi : \mathcal{N}_u \to \mathcal{D}$ if and only if $\Phi(s) = c_0 s + \phi(s),$
		for some non-negative integer $c_0 \in \mathbb{N}\cup\{0\}$ and some $\phi \in \mathcal{D}$.
	\end{thm}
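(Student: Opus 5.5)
The proof follows the Gordon--Hedenmalm scheme \cite{Gordon-Michigan-1999}. Two ingredients from the paper drive it: the point-evaluation estimate $\log(1+|f(s)|)\le K\|f\|_0$ for $s\in\mathbb{C}_1$, derived from \eqref{Dipon-vasu-p2-eqn-039}, and the fact that every Dirichlet polynomial lies in $\mathcal{N}_u$. Here ``bounded'' is read in the $F$-space sense: $C_\Phi$ sends $\mathcal{N}_u$ into $\mathcal{D}$, and bounded sets go to sets that are locally bounded on the half-plane where the images converge.

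\emph{Necessity.} Suppose $C_\Phi:\mathcal{N}_u\to\mathcal{D}$.
\begin{enumerate}
\item Take $f(s)=2^{-s}\in\mathcal{N}_u$. Then $2^{-\Phi(s)}$ is a convergent Dirichlet series $\sum_n b_n n^{-s}$ in some half-plane. Applying the same to $f=p^{-s}$ for each prime $p$ gives $p^{-\Phi(s)}\in\mathcal{D}$.
\item Use the arithmetic lemma of Gordon--Hedenmalm. If $2^{-\Phi}$ and $3^{-\Phi}$ are both Dirichlet series, then $\Phi(s)=c_0s+\phi(s)$, where $c_0$ is a non-negative integer and $\phi\in\mathcal{D}$.
\item The argument for step 2 runs as follows. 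Write $2^{-\Phi(s)}=m^{-s}\,u(s)$, with $m^{-s}$ the leading term and $u$ a Dirichlet series satisfying $u(+\infty)=b_m\neq0$. Since $u$ tends to a nonzero constant, taking a logarithm on a far right half-plane gives $\Phi(s)=(\log m/\log2)\,s+\phi(s)$ with $\phi\in\mathcal{D}$.
\item Doing the same with base $3$ gives $\log m/\log 2=\log m'/\log 3=c_0$. Hence $2^{c_0}=m$ and $3^{c_0}=m'$ are both integers.
\item Unique factorization then forces $c_0\in\mathbb{N}\cup\{0\}$.
\end{enumerate}
I expect step 5 to be the main obstacle, because the irrationality argument has to be carried out carefully. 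I would cite it directly from \cite{Gordon-Michigan-1999}, since it uses no property of the space beyond monomials being in the domain.

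\emph{Sufficiency.} Let $\Phi=c_0s+\phi$ with $\phi\in\mathcal{D}$, and take $f=\sum a_nn^{-s}\in\mathcal{N}_u$. Then
\[
f\circ\Phi(s)=\sum_n a_n n^{-c_0 s}\, n^{-\phi(s)}.
\]
\begin{enumerate}
\item Expand each $n^{-\phi(s)}=n^{-c_1}\exp\bigl(-\log n\sum_{k\ge2}c_kk^{-s}\bigr)$ as a Dirichlet series on a far right half-plane.
\item Control the coefficients with the point-evaluation bound. Since $\log(1+|f(w)|)\le K\|f\|_0$ for $w\in\mathbb{C}_1$, $f$ is bounded on $\mathbb{C}_1$, with a bound depending only on $\|f\|_0$. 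Also $\Phi$ maps $\mathbb{C}_1$ into $\mathbb{C}_1$, and $\Re\Phi(\sigma+it)\to\infty$ as $\sigma\to\infty$ uniformly in $t$ (for $c_0\ge1$ by the linear term; for $c_0=0$ one uses $\phi(+\infty)\in\mathbb{C}_1$ together with Bohr's theorem).
\item On a far half-plane $\mathbb{C}_\theta$, $f\circ\Phi$ is therefore a bounded holomorphic function given by absolutely convergent rearrangements of Dirichlet series.
\item By Bohr's theorem this function is a convergent Dirichlet series, so $f\circ\Phi\in\mathcal{D}$.
\item The uniform bound $\sup_{\mathbb{C}_\theta}|f\circ\Phi|\le e^{K\|f\|_0}-1$ shows that bounded sets are mapped to bounded families, which gives boundedness.
\end{enumerate}
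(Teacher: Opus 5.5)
\textbf{Verdict: genuine gap in the necessity direction.} Your overall scheme is the Gordon--Hedenmalm one, which is what the paper intends, since it omits the proof as a direct transfer of their argument. However, the arithmetic core of your necessity argument fails as written. In steps 4--5 you pass from $2^{c_0}=m\in\mathbb{N}$ and $3^{c_0}=m'\in\mathbb{N}$ to $c_0\in\mathbb{N}\cup\{0\}$ by ``unique factorization''. But $c_0$ is a priori an arbitrary real number, and unique factorization says nothing here. The implication $2^{t},3^{t}\in\mathbb{N}\Rightarrow t\in\mathbb{Z}$ is a well-known open problem; it would follow from the four exponentials conjecture. So the lemma in your step 2, stated with only the bases $2$ and $3$, cannot be cited from anywhere: $\Phi(s)=ts$ with such a hypothetical non-integer $t$ would satisfy its hypotheses.

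The repair uses what your step 1 already provides, namely $k^{-s}\in\mathcal{N}_u$ for every $k\in\mathbb{N}$. Since $\phi(s)\to\phi(+\infty)$ as $\operatorname{Re}s\to\infty$, comparing leading terms of the Dirichlet series $k^{-\Phi(s)}=k^{-c_0s}k^{-\phi(s)}$ as $s=\sigma\to+\infty$ gives $k^{c_0}\in\mathbb{N}$ for every $k$. Now fix an integer $N>c_0$. The $N$-th forward difference of $x\mapsto x^{c_0}$ at $x=k$ is an integer, and it equals $c_0(c_0-1)\cdots(c_0-N+1)\,\xi_k^{c_0-N}$ for some $\xi_k\in(k,k+N)$. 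This tends to $0$ as $k\to\infty$, so it vanishes for large $k$, forcing $c_0\in\{0,1,\dots,N-1\}$. Alternatively, $2^{c_0},3^{c_0},5^{c_0}\in\mathbb{N}$ suffices by the six exponentials theorem.

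\textbf{Second gap, in step 3.} The function $u(s)=m^{s}2^{-\Phi(s)}=\sum_{n\ge m}b_n(n/m)^{-s}$ is an ordinary Dirichlet series only when $m=1$. In general it is a generalized Dirichlet series with frequencies $\log(n/m)$. Taking its logarithm therefore only shows that $\phi$ is a generalized Dirichlet series whose frequencies are $\log r$ with $r\in\mathbb{Z}[1/m]$, $r>1$; it does not give $\phi\in\mathcal{D}$. To fix this, first establish that $c_0$ is an integer. Then run the same argument with base $3$ (its leading index is $3^{c_0}$) and use uniqueness of coefficients of absolutely convergent generalized Dirichlet series. This forces $r\in\mathbb{Z}[1/2^{c_0}]\cap\mathbb{Z}[1/3^{c_0}]=\mathbb{Z}$, so all frequencies are $\log n$ with $n\in\mathbb{N}$, which is what gives $\phi\in\mathcal{D}$.

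\textbf{Sufficiency.} This half is essentially right, but two details need correcting.
\begin{enumerate}
\item The constant in $\log(1+|f(w)|)\le K\|f\|_0$ obtained from $\|\mathbf{P}_\zeta\|_\infty=\prod_{p}\frac{1+p^{-\sigma}}{1-p^{-\sigma}}$ is uniform only on each $\mathbb{C}_{1+\varepsilon}$, not on $\mathbb{C}_1$. So your claim that $f$ is bounded on $\mathbb{C}_1$ in terms of $\|f\|_0$ should be restricted accordingly.
\item For $c_0=0$ one has $\operatorname{Re}\Phi\to\operatorname{Re}c_1$, not $\infty$. In fact $\operatorname{Re}c_1>1$: otherwise the first non-vanishing term $c_NN^{-s}$ of $\phi-c_1$ would make $\operatorname{Re}\phi<1$ for suitable $t$ and large $\sigma$.
\end{enumerate}
With these corrections, in both cases $\Phi(\mathbb{C}_\theta)\subset\mathbb{C}_{1+\varepsilon}$ for $\theta$ large. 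Your bound $\sup_{\mathbb{C}_\theta}|f\circ\Phi|\le e^{K_\varepsilon\|f\|_0}-1$ then goes through.
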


Moreover, Theorem \ref{Dipon-vasu-p2-thm-02} holds for a symbol $\Phi : \mathbb{C}_\theta \to \mathbb{C}_1$, for any $\theta\in\mathbb{R}.$	
For a Dirichlet series $f(s) = \sum_{n=1}^\infty a_n n^{-s}$, Helson \cite{Helson-Arkiv-1970} proved that the corresponding vertical limit function $f_\chi$ converges in the half-plane $\mathbb{C}_0$ for almost every character $\chi \in \mathbb{T}^\infty$. This convergence property extends to the Nevanlinna class $\mathcal{N}_u$ with $\sigma_u(f) \le 0$ in a similar way. Furthermore, in Lemma~\ref{Dipon-vasu-p2-lem-09}, we establish that $f_\chi$ admits generalized boundary values for almost every $\chi \in \mathbb{T}^\infty$, a fact that plays a key role in our subsequent results.

\begin{prop}\label{Dipon-vasu-p2-prop-01}
	If a holomorphic function $\Phi: \mathbb{C}_1 \to \mathbb{C}_1$ with char$(\Phi)=0$ induces a bounded composition operator $C_\Phi: \mathcal{N}_u \to \mathcal{N}_u$ on functions with $\sigma_u(f) \le 0$, then for almost every character $\chi \in \mathbb{T}^\infty$, the vertical limit function $\Phi_\chi$ admits an analytic continuation to the open right half-plane $\mathbb{C}_0$.
\end{prop}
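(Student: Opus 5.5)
The approach is to apply $C_\Phi$ to a single well-chosen test function and read off $\Phi_\chi$ from its image. Since char$(\Phi)=0$, Theorem~\ref{Dipon-vasu-p2-thm-02} gives $\Phi=\phi$, a Dirichlet series $\phi(s)=\sum_{n\ge1}c_nn^{-s}$ converging in some half-plane. The natural test function is $f(s)=2^{-s}$, which lies in $\mathcal{H}^\infty\subset\mathcal{N}_u$ with $\sigma_u(f)=-\infty$. By hypothesis
\[
g:=C_\Phi f=2^{-\Phi}\in\mathcal{N}_u ,\qquad \sigma_u(g)\le 0 .
\]
Since $g$ is nowhere zero on $\mathbb{C}_1$, we want to recover $\Phi$ from $g$ through a logarithm, $\Phi=-\log g/\log 2$, with the branch fixed by $\Phi(+\infty)=c_1$.

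The first step passes to vertical limits. Because $\sigma_u(g)\le0$, the limit $g_\chi(s)=\lim_j g(s+i\tau_j)$ is uniform on each $\overline{\mathbb{C}_\theta}$, $\theta>0$, whenever $\chi(n)=\lim_j n^{-i\tau_j}$. Along the same sequence, $\Phi(s+i\tau_j)\to\Phi_\chi(s)$ on $\mathbb{C}_1$ (Bohr/Bayart). Hence $g_\chi=2^{-\Phi_\chi}$ on $\mathbb{C}_1$, and $g_\chi$ extends holomorphically to $\mathbb{C}_0$ for every $\chi$. For almost every $\chi$ this extension lies in $N(\mathbb{C}_0)$, and its boundary values satisfy $\log|g^*|\in L^1$, by Lemma~\ref{Dipon-vasu-p2-lem-09} and Theorem~\ref{Dipon-vasu-p2-thm-07}. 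The same holds for every vertical translate.

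The second step, and the main obstacle, is showing that $g_\chi$ has no zeros in $\mathbb{C}_0$, so that a holomorphic logarithm exists. Two routes are available.

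\emph{Hurwitz route.} The functions $g(\cdot+i\tau)$ are zero-free on $\mathbb{C}_1$. Zeros of the continuation in $\mathbb{C}_0$ could still appear, so one would first argue that $g$ itself is zero-free on $\mathbb{C}_0$, and then apply Hurwitz's theorem to the uniform limits on $\mathbb{C}_\theta$: $g_\chi$ is either zero-free or identically zero, and the latter is excluded since $g_\chi(+\infty)=2^{-c_1}\ne0$.

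\emph{Mean counting route.} If Hurwitz does not work directly, I would use the mean counting function $M_g(0)$ from \eqref{Dipon-vasu-p2-eqn-016}. Zeros of $g$ in $\mathbb{C}_0$ propagate almost periodically and so give $M_g(0)>0$. Then Jensen's formula on rectangles, as in the potential-theoretic proof of Theorem~\ref{Dipon-vasu-p2-thm-016}, together with the identity $-\log|g|=\log 2\,\real\Phi$ on $\mathbb{C}_1$ would be used to derive a contradiction. In fact, since $\log|g|$ is harmonic on $\mathbb{C}_1$ and equals $-\log 2\,\real\Phi$ there, one can try to show that its almost-periodic vertical mean is affine in $\sigma$ on $(0,\infty)$; a positive zero density would force a convexity jump.

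Once $g_\chi$ is known to be zero-free on $\mathbb{C}_0$, which is simply connected, we define
\[
\Phi_\chi:=-\frac{1}{\log 2}\,\log g_\chi ,
\]
with the branch agreeing with $\Phi_\chi$ on $\mathbb{C}_1$. This gives the desired analytic continuation for almost every $\chi$.

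A simpler alternative, if the zero-freeness proves delicate, is to use test functions $f_k(s)=p_k^{-s}$ for all primes $p_k$. Each image $p_k^{-\Phi_\chi}$ extends to $\mathbb{C}_0$, and the branch ambiguities $2\pi i m/\log p_k$ for different primes are incompatible, since the ratios $\log p_k/\log p_l$ are irrational. Local continuations of $\Phi_\chi$ obtained via $f_1=2^{-s}$ and $f_2=3^{-s}$ must then agree, so continuation along any path in $\mathbb{C}_0$ avoiding the (isolated) zeros is single-valued. Finally, boundedness of $\real\Phi_\chi$ from below near any putative zero shows that such zeros cannot occur.
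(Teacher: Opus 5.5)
Your main line cannot work. It uses the single test function $2^{-s}$ and then tries to prove that $g_\chi=2^{-\Phi_\chi}$ is zero-free on $\mathbb{C}_0$, via Hurwitz or mean counting. The trouble is that $2^{-\Phi}\in\mathcal{N}_u$ alone does not imply the conclusion. Take
\[
\Phi(s)=2-\frac{\log\bigl(1-\sqrt{2}\,2^{-s}\bigr)}{\log 2}=2+\frac{1}{\log 2}\sum_{k\ge 1}\frac{2^{k/2}}{k}\,2^{-ks}.
\]
This symbol has the following properties:
\begin{itemize}
\item it has characteristic $0$;
\item on $\mathbb{C}_1$ it satisfies $\operatorname{Re}\Phi>2-\log(1+2^{-1/2})/\log 2>1$;
\item $2^{-\Phi}=\tfrac14\bigl(1-\sqrt{2}\,2^{-s}\bigr)$ is a Dirichlet polynomial, hence lies in $\mathcal{N}_u$.
\end{itemize}
Yet $2^{-\Phi}$ vanishes at $s=\tfrac12+2\pi i k/\log 2$, and every $\Phi_\chi$ has logarithmic branch points on $\operatorname{Re}s=\tfrac12$, so no $\Phi_\chi$ continues to $\mathbb{C}_0$. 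This breaks both routes. The Hurwitz route's preliminary claim that $g$ is zero-free on $\mathbb{C}_0$ is false in general; it is also just the statement to be proved, for the trivial character. In the mean-counting route, the Jessen function of $g$ is $-2\log 2+\log 2\,\max\{0,\tfrac12-\sigma\}$. It is affine only on $(\tfrac12,\infty)$, where $\Phi$ is already holomorphic, so zeros of $g$ produce no contradiction.

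Your last paragraph is the correct argument and should be the proof. A clean way to write it: let $G_2,G_3$ be the holomorphic extensions of $2^{-\Phi_\chi},3^{-\Phi_\chi}$ to $\mathbb{C}_0$. Then $\log 3\,G_2'/G_2=\log 2\,G_3'/G_3$ on $\mathbb{C}_1$, hence on $\mathbb{C}_0$ as meromorphic functions. Comparing residues at any $s_0$ gives $m\log 3=m'\log 2$ for the orders of vanishing $m,m'$, so $m=m'=0$ by irrationality of $\log 3/\log 2$. Now $G_2=e^{h}$ on the simply connected $\mathbb{C}_0$, and $-(h-2\pi i k)/\log 2$, with the integer $k$ fixed by matching on $\mathbb{C}_1$, is the continuation. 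Your closing remark about $\operatorname{Re}\Phi_\chi$ being bounded below also works once single-valuedness is known, since the isolated singularity must then be removable; the residue comparison simply makes it unnecessary.

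Compared with the paper: it also tests all monomials $n^{-s}$, but then simply asserts that the extensions of $n^{-\Phi_\chi}$ are non-vanishing on $\mathbb{C}_0$. The example above shows this is not automatic for a single $n$, and your two-prime argument is exactly what justifies it. Moreover, $\sigma_u\le 0$ is part of membership in $\mathcal{N}_u$ and is invariant under vertical limits, so $(p^{-\Phi})_\chi$ extends to $\mathbb{C}_0$ for \emph{every} $\chi$. Your argument therefore gives the conclusion for every character. The null-set bookkeeping, boundary values, and $N(\mathbb{C}_0)$ membership are not needed.
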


\begin{proof}
	We focus on the primary case of interest where the symbol $\Phi$ with char$(\Phi)=0$. For each $n \in \mathbb{N}$, the Dirichlet monomial $f_n(s) = n^{-s}$ belongs to $\mathcal{N}_u$. By assumption, $C_\Phi(n^{-s}) = n^{-\Phi(s)} \in \mathcal{N}_u$. Consequently, the vertical limit function $(n^{-\Phi})_\chi$ is holomorphic in $\mathbb{C}_0$ for almost every character $\chi \in \mathbb{T}^\infty$. The identity
	\begin{equation*}
		n^{-\Phi_\chi(s)} = \chi(n)^{-c_0} (n^{-\Phi})_\chi(s)
	\end{equation*}
	provides an analytic continuation of $n^{-\Phi_\chi}$ to $\mathbb{C}_0$ for almost every character $\chi$ in the half-plane of uniform convergence of $\phi$. Indeed, for each $n \in \mathbb{N}$, the property holds outside a null set $E_n \subset \mathbb{T}^\infty$. Since a countable union of sets of measure zero has measure zero, this analytic continuation is valid for all $n \in \mathbb{N}$ outside the null set $E = \bigcup_{n=1}^\infty E_n$. As $n^{-\Phi_\chi(s)}$ is non-vanishing and analytic in $\mathbb{C}_0$ for all $n \in \mathbb{N}$, taking a principal branch of the logarithm establishes that any potential singularities of $\Phi_\chi(s) = -\frac{1}{\log n} \log\bigl(n^{-\Phi_\chi(s)}\bigr)$ are absent in $\mathbb{C}_0$. Thus, $\Phi_\chi$ extends analytically to $\mathbb{C}_0$ for almost every $\chi \in \mathbb{T}^\infty$.
\end{proof}

The classical Stanton formula for the unit disk $\mathbb{D}$ can be found in \cite[Section 10.3]{Shapiro-Book-1992}. For the Dirichlet series version of the Stanton formula, we refer to \cite[Theorem 1.3]{Brevig-AdvMath-2021}. In this regard, we now provide an analogue of the Stanton formula for the Nevanlinna class of Dirichlet series, which will play a key role in the proof of Theorem \ref{Dipon-vasu-p2-thm-05}.

\begin{thm}\label{Dipon-vasu-p2-thm-013}
	Let $f \in \mathcal{N}_u$ with $\sigma_u(f)\le 0$ and let $\Phi : \mathbb{C}_1 \to \mathbb{C}_1$ be a symbol of characteristic zero that extends holomorphically to a mapping $\Phi : \mathbb{C}_0 \to \mathbb{C}_0$. Assume that $\sigma_u(f \circ \Phi) \le 0$ and $f\not\equiv 0$. Then the norm identity holds:
	\begin{equation}\label{Dipon-vasu-p2-eqn-017}
		\|C_\Phi(f)\|_0 = \log\Bigl(1 + \bigl|f\bigl(\Phi(+\infty)\bigr)\bigr|\Bigr) + \frac{2}{\pi} \int_{\mathbb{C}_0} \frac{|f'(w)|^2}{\bigl|f(w)|(1 + |f(w)|\bigr)^2} M_\Phi(w) \, dA(w),
	\end{equation}
	where $M_\Phi$ denotes the mean counting function associated with $\Phi$ on $\mathbb{C}_0$, and $dA(w)$ represents the Lebesgue area measure on $\mathbb{C}_0$.
\end{thm}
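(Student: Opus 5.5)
Apply the Littlewood--Paley identity to $g=f\circ\Phi$ and then change variables $w=\Phi(s)$ via the Brevig--Perfekt mean counting function. This mirrors the proof of \cite[Theorem 1.3]{Brevig-AdvMath-2021}, with the Nevanlinna weight in place of the $\mathcal{H}^2$ weight. By hypothesis $\sigma_u(g)\le 0$. Also $g\not\equiv 0$: since $f\not\equiv0$ and $\Phi$ is nonconstant, $g$ vanishes only on a discrete set. Hence Theorem~\ref{Dipon-vasu-p2-thm-01} applies to $g$. Because char$(\Phi)=0$, we have $\Phi(+\infty)=c_1$, so $g(+\infty)=f(\Phi(+\infty))$, which produces the first term of \eqref{Dipon-vasu-p2-eqn-017}.

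The chain rule gives $g'=(f'\circ\Phi)\,\Phi'$, so
\begin{equation*}
\frac{|g'(s)|^2}{|g(s)|(1+|g(s)|)^2}=q_f(\Phi(s))\,|\Phi'(s)|^2,
\end{equation*}
where $q_f$ is as in \eqref{Dipon-vasu-p2-eqn-052}. For fixed $\sigma_0>0$ and $T>0$, apply the non-injective change of variables formula on the rectangle $\{\sigma_0<\sigma<R,\ |t|<T\}$. The area formula, with $|\Phi'|^2$ as the Jacobian, turns
\begin{equation*}
\frac{1}{2T}\int_{\sigma_0}^{\infty}\int_{-T}^{T} q_f(\Phi(s))|\Phi'(s)|^2(\sigma-\sigma_0)\,dt\,d\sigma
\end{equation*}
into
\begin{equation*}
\int_{\mathbb{C}_0} q_f(w)\,\frac{1}{2T}\sum_{\substack{s\in\Phi^{-1}(w),\ |\operatorname{Im} s|<T\\ \operatorname{Re} s>\sigma_0}}(\operatorname{Re}s-\sigma_0)\,dA(w).
\end{equation*}
By \eqref{Dipon-vasu-p2-eqn-016}, the counting sum is $\frac{1}{2\pi}$ times the truncated version of $\frac{\pi}{T}\sum(\cdots)$. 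After the limits $T\to\infty$ and $\sigma_0\to0^+$ it converges to $\frac{1}{2\pi}M_\Phi(w)$. The normalization constant in \eqref{Dipon-vasu-p2-eqn-017} has to be tracked carefully against \eqref{Dipon-vasu-p2-eqn-016}. The factor $\frac{2}{\pi}$ should come out of this bookkeeping, possibly with a factor from $\Delta=4\partial\bar\partial$. If a discrepancy appears, I would reconcile it by rechecking the normalization of $\Delta U_f$ in Theorem~\ref{Dipon-vasu-p2-thm-016}.

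The main obstacle is justifying the interchange of $\lim_{\sigma_0\to0^+}\lim_{T\to\infty}$ with $\int_{\mathbb{C}_0}\,dA(w)$. The weight $q_f$ is nonnegative, so monotone convergence in $\sigma_0$ is available: the truncated counting sums increase as $\sigma_0\downarrow0$. For the $T$-limit I would first work on $\mathbb{C}_\delta$, i.e. with $w$ satisfying $\operatorname{Re} w>\delta$. On that region I would use the uniform almost periodicity of $\Phi$ on $\overline{\mathbb{C}_\epsilon}$ (Bohr) and the Bohr--Jessen existence of mean counting limits \cite[Theorem 1.5]{BrevigKouro-TAMS-2025}. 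I would then dominate the averaged counting sums by a Littlewood-type inequality $\frac{1}{2T}\sum(\operatorname{Re}s-\sigma_0)\lesssim \log|\cdots|$ coming from Jensen's formula on rectangles. This domination would allow Fatou's lemma in one direction and dominated convergence in the other. If uniform domination fails near $i\mathbb{R}$, the fallback is to prove the identity for $f_\varepsilon(s)=f(s+\varepsilon)$, where everything is bounded, and let $\varepsilon\to0^+$ by monotonicity as in the proof of Theorem~\ref{Dipon-vasu-p2-thm-04}. Alternatively, one can average over characters $\chi$ using Lemma~\ref{Dipon-vasu-p2-lem-05} and Theorem~\ref{Dipon-vasu-p2-thm-08}, replacing $T$-limits by integrals over $\mathbb{T}^\infty$, which interchange freely by Tonelli.
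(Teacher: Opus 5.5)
You follow the paper's route: apply the Littlewood--Paley identity of Theorem~\ref{Dipon-vasu-p2-thm-01} to $g=f\circ\Phi$, use $g'=(f'\circ\Phi)\Phi'$, and change variables $w=\Phi(s)$ against the mean counting function. The genuine gap is the constant, and it sits exactly where you hedge.

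Your bookkeeping is correct. Since $\frac{1}{2T}=\frac{1}{2\pi}\cdot\frac{\pi}{T}$, the change of variables produces $\frac{1}{2\pi}M_\Phi(w)$. The step ``The factor $\frac{2}{\pi}$ should come out of this bookkeeping, possibly with a factor from $\Delta=4\partial\bar\partial$'' cannot be carried out, because no factor $4$ is missing:
\begin{itemize}
\item Write $\log(1+|f|)=\phi(\log|f|)$ with $\phi(x)=\log(1+e^x)$. Then $\Delta\log(1+|f|)=\phi''(\log|f|)\,|f'/f|^2=|f'|^2/(|f|(1+|f|)^2)$ for the real Laplacian $\partial_\sigma^2+\partial_t^2$.
\item The $\frac{1}{2T}$ normalization in Theorem~\ref{Dipon-vasu-p2-thm-01} is right. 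For $f(s)=2^{-s}$ both sides equal $\log 2$, because $\int_0^\infty x e^{-x}(1+e^{-x})^{-2}\,dx=\log 2$.
\item The factor $4$ belongs to the $\mathcal{H}^2$ setting, where $\Delta|g|^2=4|g'|^2$. That is where the $\frac{2}{\pi}$ in the Brevig--Perfekt Stanton formula comes from.
\end{itemize}
The paper's proof reaches $\frac{2}{\pi}$ only by writing the Littlewood--Paley term for $g$ as $\lim_{T\to\infty}\frac{2}{T}\int_0^\infty\int_{-T}^{T}\cdots\,\sigma\,dt\,d\sigma$. That is four times what Theorem~\ref{Dipon-vasu-p2-thm-01} actually gives.

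Consequently the identity as stated is false, and your argument, once completed, proves it with $\frac{1}{2\pi}$ in place of $\frac{2}{\pi}$. A concrete check is $\Phi(s)=2+2^{-s}$ and $f(w)=2^{-w}$, which satisfy all the hypotheses. Here $M_\Phi(w)=\log(1/|w-2|)$ on $\mathbb{D}(2,1)$, and $M_\Phi=0$ outside $\overline{\mathbb{D}(2,1)}$. After the substitution $z=w-2$, the formula becomes the classical Green identity on $\mathbb{D}$ for $G(z)=2^{-2-z}$. With Lebesgue area measure that identity has constant $\frac{1}{2\pi}$, and its integral term is strictly positive since $G'\neq 0$. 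So $\frac{2}{\pi}$ overshoots by exactly a factor $4$.

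The interchange of limits, which you rightly flag and the paper passes over in one line, looks manageable along your lines. Monotone convergence handles $\sigma_0$, because $(\operatorname{Re}s-\sigma_0)_+$ increases as $\sigma_0\downarrow 0$, and character averaging with Tonelli handles $T$. None of that touches the constant, so trust your $\frac{1}{2\pi}$ rather than trying to reconcile it with the stated value.
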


\begin{pf}
	Put \(g=f\circ\Phi\). 
	Hence the Littlewood–Paley formula \eqref{Dipon-vasu-p2-thm-01} applies to \(g\). Since $ g'(s)=f'(\Phi(s))\Phi'(s),$ it follows that
	\begin{align*}
		\norm{C_\Phi(f)}_{0}= &\log(1+ |f(\Phi(+\infty))|) \\&+ \lim_{T\to\infty}\frac{2}{T}\int_{0}^{\infty}\int_{-T}^{T}\frac{|f'(\Phi(\sigma+it))|^2}{|f(\Phi(\sigma+it))|(1+|f(\Phi(\sigma+it))|)^2}\abs{\Phi'(\sigma+it)}^2\sigma \,dtd\sigma.
	\end{align*}
	Applying the change-of-variables formula associated with the mean counting function \(M_\Phi\) yields
	\begin{align*}
		\lim_{T\to\infty}\frac{2}{T}\int_{0}^{\infty}\int_{-T}^{T}&\frac{|f'(\Phi(\sigma+it))|^2}{\big|f(\Phi(\sigma+it))|(1+|f(\Phi(\sigma+it))|\big)^2}\,|\Phi'(\sigma+it)|^2\,\sigma \,dt\, d\sigma \\&\,\,\,\,\,\,\,\,\,\hspace{3cm}= \frac{2}{\pi}\int_{\mathbb{C}_{0}}\frac{|f'(w)|^2}{\big|f(w)|(1+|f(w)|\big)^2}\,M_\Phi(w) \,dA(w).
	\end{align*}
	Combining the last two identities proves the result.
\end{pf}

\begin{rem}
	A similar Stanton type formula for $\mathcal{N}_u$ with respect to the boundary functions $f_\chi$ (for almost every $\chi \in \mathbb{T}^\infty$) follows from Theorem \ref{Dipon-vasu-p2-thm-08} via the non-univalent substitution $s = \Phi_\chi(u + iv)$. This variant naturally involves the Nevanlinna counting function rather than the mean counting function (cf.\ \cite{Bayart-AnnFourier-2016}). Because our interest centers on symbols in char$(\Phi)=0,$ we confine ourselves to the mean counting function.
	
\end{rem}

\begin{rem}
	Theorem \ref{Dipon-vasu-p2-thm-013} demonstrates that the mean counting function $M_f$ naturally exists when the Dirichlet series $f$ belongs to the Nevanlinna class $\mathcal{N}_u$, in accordance with \cite[Lemma 6.5]{Brevig-AdvMath-2021}. However, for the Nevanlinna--Dirichlet class $\mathcal{N}$, defined as the completion of $\mathcal{N}_u$, the existence of a corresponding counting function yielding a Stanton-type formula remains an open question. 
\end{rem}

We now record an analogue of Littlewood's inequality for the mean counting function, which plays a vital role in the proof of Theorem~\ref{Dipon-vasu-p2-thm-09}.

\begin{thm}\label{Dipon-vasu-p2-thm-014}
	Let $\Phi$ be a Dirichlet series symbol with $\operatorname{char}(\Phi) = 0$. Then the mean counting function $M_\Phi(w)$ exists for every $w \in \mathbb{C}_0 \setminus \{\Phi(+\infty)\}$, and satisfies the point-wise estimate
	\begin{equation}\label{Dipon-vasu-p2-eqn-028}
		M_\Phi(w) \le \log \left| \frac{\overline{w} + \Phi(+\infty)}{w - \Phi(+\infty)} \right|.
	\end{equation}
\end{thm}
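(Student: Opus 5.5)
The plan is to reduce the estimate to the classical Littlewood inequality for a Blaschke-type function on $\mathbb{C}_0$, applied to the vertical translates of $\Phi$ and averaged in the vertical direction. Throughout, $\Phi$ is understood, as in Theorem \ref{Dipon-vasu-p2-thm-013}, to map $\mathbb{C}_0$ into $\mathbb{C}_0$. Fix $w\in\mathbb{C}_0\setminus\{\Phi(+\infty)\}$ and put $a=\Phi(+\infty)\in\overline{\mathbb{C}_0}$. If $a$ lies on $i\mathbb{R}$, then $\Phi$ is constant, since the open map $\Phi$ cannot attain a boundary value at an interior point; the counting function then vanishes and there is nothing to prove. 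So we may assume $a\in\mathbb{C}_0$. Consider the bounded analytic function
\[
B(s)=\frac{\Phi(s)-w}{\Phi(s)+\overline{w}},
\]
which maps $\mathbb{C}_0$ into $\mathbb{D}$, since $z\mapsto (z-w)/(z+\overline w)$ is a conformal map of $\mathbb{C}_0$ onto $\mathbb{D}$. Its zeros are exactly the points of $\Phi^{-1}(\{w\})$, with multiplicities. Moreover $B$ is a uniform limit of Dirichlet series in each $\mathbb{C}_\varepsilon$, so $\sigma_u(B)\le 0$, and $B(+\infty)=(a-w)/(a+\overline w)\neq 0$.

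The key step is a Jensen formula on the strip, taken from the potential-theoretic argument of Theorem \ref{Dipon-vasu-p2-thm-016}. We apply the Poisson--Jensen formula (Theorem \ref{Dipon-vasu-p2-thm-015}) to the subharmonic function $u=\log|B|$ on the rectangles $D_{R,T}$, average vertically, and use Lemma \ref{Dipon-vasu-p2-lem-11}. Letting $T\to\infty$ and then $R\to\infty$ should give, for $\sigma_0>0$,
\[
\lim_{T\to\infty}\frac{1}{2T}\int_{-T}^{T}\log|B(\sigma_0+it)|\,dt=\log|B(+\infty)|-\lim_{T\to\infty}\frac{\pi}{T}\sum_{\substack{s\in\Phi^{-1}(w)\\ |\operatorname{Im}s|<T,\ \operatorname{Re}s>\sigma_0}}(\operatorname{Re}s-\sigma_0).
\]
Here the Riesz measure of $\log|B|$ is $2\pi\sum\delta_{s}$ over the zeros, and the normalization matches the factor $\pi/T$ in \eqref{Dipon-vasu-p2-eqn-016}. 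Because $|B|<1$, the left-hand side is $\le 0$. This yields
\[
\lim_{T\to\infty}\frac{\pi}{T}\sum(\operatorname{Re}s-\sigma_0)\le -\log|B(+\infty)|=\log\left|\frac{\overline w+a}{w-a}\right|
\]
uniformly in $\sigma_0$. The truncated sums increase as $\sigma_0\downarrow 0$, so the limit defining $M_\Phi(w)$ exists, being monotone and bounded, and satisfies \eqref{Dipon-vasu-p2-eqn-028}.

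The main obstacle is justifying the Jensen step on an unbounded domain. Two things must be controlled. First, the horizontal boundary contributions at $\pm iT$ must be $o(T)$. Second, the vertical means of $\log|B|$ and the zero-counting averages must exist. $\log|B|$ is not bounded below near zeros, so plain almost periodicity does not immediately apply. I would handle this as the paper suggests, via Bohr--Jessen: for a.e. $\sigma$ the mean of $\log|B(\sigma+it)|$ exists and is a convex function of $\sigma$ (Jessen's theorem), and the zero counts in a strip grow linearly in $T$. To sidestep the issue, it suffices to prove the inequality for each fixed $\sigma_0$ with $\limsup_T$ in place of $\lim$, using the regularization $\log(|B|^2+\varepsilon^2)^{1/2}$ as in Lemma \ref{Dipon-vasu-p2-lem-06} and letting $\varepsilon\to0$ by monotone convergence on the zero side. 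Existence of the limit is then obtained from \cite[Theorem 1.5]{BrevigKouro-TAMS-2025}.
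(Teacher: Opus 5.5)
Your argument is the paper's. The paper applies Jessen's formula \eqref{Dipon-vasu-p2-eqn-045} to $\Phi_w=(w-\Phi)/(\overline{w}+\Phi)=-B$ and uses $\mathcal{J}_{\Phi_w}(\sigma_0)\le 0$. You obtain the same Jensen-type identity by vertically averaging Poisson--Jensen on rectangles instead of citing Jessen, and you state explicitly the hypothesis $\Phi(\mathbb{C}_0)\subset\mathbb{C}_0$ that the paper uses tacitly.

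However, the identity you display has the wrong sign, and with that sign the key step does not follow. Write $X(\sigma_0)$ for the limit of the truncated zero sums. Then $\log|B(+\infty)|-X(\sigma_0)\le 0$ only yields the vacuous bound $X(\sigma_0)\ge\log|B(+\infty)|$. In Poisson--Jensen, the Green potential of the zeros is subtracted from the boundary data to give the value at the interior point, which here is $+\infty$. So the zero term belongs on the other side, and the correct identity (the content of \eqref{Dipon-vasu-p2-eqn-045}) is
\[
\lim_{T\to\infty}\frac{1}{2T}\int_{-T}^{T}\log|B(\sigma_0+it)|\,dt=\log|B(+\infty)|+X(\sigma_0).
\]
This is the analogue of $\frac{1}{2\pi}\int_0^{2\pi}\log|g(re^{i\theta})|\,d\theta=\log|g(0)|+\sum_{|a_k|<r}\log(r/|a_k|)$. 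As a check, take $B(s)=(2^{-s}-c)/(1-\overline{c}\,2^{-s})$ with $0<|c|<2^{-\sigma_0}$. The left side equals $-\sigma_0\log 2$, while $\log|B(+\infty)|=\log|c|$ and $X(\sigma_0)=\log(1/|c|)-\sigma_0\log 2$. With the plus sign, $|B|<1$ gives $X(\sigma_0)\le-\log|B(+\infty)|$. The rest of your argument (monotonicity in $\sigma_0$, existence of $M_\Phi(w)$) then goes through.

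There are two smaller points.

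First, your reason for excluding $a\in i\mathbb{R}$ is not valid as stated. The point $a$ is a limit at the boundary point $+\infty$, not a value of $\Phi$. A general self-map of $\mathbb{C}_0$ such as $s\mapsto 1/s$ does tend to a point of $i\mathbb{R}$. What rules this out is almost periodicity: $\operatorname{Re}a$ is the vertical mean of the positive almost periodic function $t\mapsto\operatorname{Re}\Phi(\sigma+it)$, hence $a\in\mathbb{C}_0$ always.

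Second, the regularization $\tfrac12\log(|B|^2+\varepsilon^2)$ does not sidestep the horizontal-edge problem the way it does in Lemma~\ref{Dipon-vasu-p2-lem-06}. Its $t$-derivative is bounded only by $|B'|/|B|$, not uniformly in $\varepsilon$. You therefore still need the Bohr--Jessen control of the zeros, which is precisely what the paper imports by citing Jessen's theorem.
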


\begin{proof}
	We adapt the strategy from \cite[Theorem~1.1]{Brevig-AdvMath-2021}. To relate inequality \eqref{Dipon-vasu-p2-eqn-028} to the classical Littlewood inequality for the Nevanlinna counting function on the unit disk (see, e.g., \cite{Shapiro-annals-1987, Shapiro-Book-1992}), recall that the pseudo-hyperbolic distance between two distinct points $w, v \in \mathbb{C}_0$ is given by
	\begin{equation*}
		\rho_{\mathbb{C}_0}(w, v) = \left| \frac{w - v}{\overline{w} + v} \right|,
	\end{equation*}
	due to the conformal invariance of the pseudo-hyperbolic metric.
	For a fixed point $w \in \mathbb{C}_0 \setminus \{\Phi(+\infty)\}$, consider the auxiliary Dirichlet series symbol
	\begin{equation*}
		\Phi_w(s) := \frac{w - \Phi(s)}{\overline{w} + \Phi(s)}, \qquad s \in \mathbb{C}_0.
	\end{equation*}
	The mean counting function for $\Phi$ at $w$ satisfies
	\begin{equation*}
		M_\Phi(w) = M_{\Phi_w}(0) = \lim_{\sigma \to 0^+} M_{\Phi_w}(0, \sigma),
	\end{equation*}
	where, for a function $f \in \mathcal{N}_u$, the truncated mean counting function at level $\sigma > 0$ is defined by
	\begin{equation*}
		M_f(w, \sigma) := \lim_{T \to \infty} \frac{\pi}{T} \sum_{\substack{s \in f^{-1}(\{w\}) \\ |\operatorname{Im}(s)| < T \\ \operatorname{Re}(s) > \sigma}} \bigl( \operatorname{Re}(s) - \sigma \bigr).
	\end{equation*}
	By Jessen's theorem \cite{Jessen-acta-1945}, for any $f \in \mathcal{N}_u$ and $w \in \mathbb{C}_0 \setminus \{f(+\infty)\}$, the mean counting function satisfies the limit identity
	\begin{equation}\label{Dipon-vasu-p2-eqn-045}
		M_f(w) = \lim_{\sigma_0 \to 0^+} M_f(w, \sigma_0) = \lim_{\sigma_0 \to 0^+} \mathcal{J}_{f-w}(\sigma_0) - \log|f(+\infty) - w|,
	\end{equation}
	where
	\begin{equation*}
		\mathcal{J}_g(\sigma) := \lim_{T \to \infty} \frac{1}{2T} \int_{-T}^T \log|g(\sigma + it)| \, dt, \qquad \sigma > 0,
	\end{equation*}
	denotes Jessen's function. Since $\mathcal{J}_{\Phi_w}(\sigma_0) \le 0$ for all $\sigma_0 > 0$, taking $\sigma_0 \to 0^+$ in \eqref{Dipon-vasu-p2-eqn-045} for $\Phi_w$ yields
	\begin{equation*}
		M_\Phi(w) = M_{\Phi_w}(0) \le -\log|\Phi_w(+\infty)| = \log \left| \frac{\overline{w} + \Phi(+\infty)}{w - \Phi(+\infty)} \right|,
	\end{equation*}
	which completes the proof.
\end{proof}

Observe that for the point $\Phi(+\infty)$ the upper bound of \eqref{Dipon-vasu-p2-eqn-028} decays like $\real w$ when $\real w\to 0^+$ while keeping $\imaginary w$ is fixed. For a detailed analysis of this behaviour, we refer to \cite[Lemma 2.3]{Brevig-AdvMath-2021}. It is natural to observe that the symbol
\begin{equation*}
	\Phi_a(s) = a \, \frac{1 - 2^{-s - 1/2}}{1 + 2^{-s - 1/2}}, \qquad a > 0,
\end{equation*}
attains the upper bound in \eqref{Dipon-vasu-p2-eqn-028}.

\begin{lem}\label{Dipon-vasu-p2-lem-10}
	Let $\Phi$ be a Dirichlet series symbol with $\operatorname{char}(\Phi) = 0$ that maps $\mathbb{C}_0$ into itself, and let $a := \Phi(+\infty) \in \mathbb{C}_0$. For every $w \in \mathbb{C}_0 \setminus \{a\}$, there exists a constant $C(w, a) > 0$ such that
	\begin{equation}\label{Dipon-vasu-p2-eqn-mean-counting-bound}
		M_\Phi(w) \le C(w, a) \operatorname{Re}(w).
	\end{equation}
	In particular, if $w$ is restricted to a compact set $K \subset \mathbb{C}_0 \setminus \{a\}$, then $M_\Phi(w) \le C_K \operatorname{Re}(w)$ for some constant $C_K > 0$.
\end{lem}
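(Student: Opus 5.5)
The plan is to start from the Littlewood-type inequality of Theorem~\ref{Dipon-vasu-p2-thm-014}, which for $w \in \mathbb{C}_0\setminus\{a\}$ gives
\begin{equation*}
	M_\Phi(w) \le \log\left|\frac{\overline{w}+a}{w-a}\right| = -\log \rho_{\mathbb{C}_0}(w,a),
\end{equation*}
and then bound the right-hand side linearly in $\operatorname{Re}(w)$. The key identity is the elementary computation
\begin{equation*}
	\left|\frac{\overline{w}+a}{w-a}\right|^2 = 1 + \frac{|\overline{w}+a|^2-|w-a|^2}{|w-a|^2} = 1 + \frac{4\operatorname{Re}(w)\operatorname{Re}(a)}{|w-a|^2}.
\end{equation*}
It follows from expanding $|\overline{w}+a|^2 - |w-a|^2 = 2\operatorname{Re}(wa) + 2\operatorname{Re}(\overline{w}a) = 4\operatorname{Re}(w)\operatorname{Re}(a)$. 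Combining this with $\log(1+x)\le x$ gives
\begin{equation*}
	M_\Phi(w) \le \frac12\log\left(1+\frac{4\operatorname{Re}(w)\operatorname{Re}(a)}{|w-a|^2}\right) \le \frac{2\operatorname{Re}(a)}{|w-a|^2}\,\operatorname{Re}(w).
\end{equation*}
So the first assertion holds with $C(w,a) := 2\operatorname{Re}(a)/|w-a|^2$. This constant is finite and positive because $a\in\mathbb{C}_0$ and $w\neq a$.

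For the compact version, note that if $K\subset\mathbb{C}_0\setminus\{a\}$ is compact, then $\delta_K := \operatorname{dist}(a,K) > 0$. Hence $C(w,a)\le 2\operatorname{Re}(a)/\delta_K^2 =: C_K$ uniformly for $w\in K$. The argument does not need $K$ to be compact in $\mathbb{C}_0$: it suffices that $K$ stays a positive distance away from $a$. This is what makes the bound useful near the boundary line $i\mathbb{R}$, where $\operatorname{Re}(w)\to 0$.

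The only step needing care is the justification that Theorem~\ref{Dipon-vasu-p2-thm-014} applies. It needs $\Phi$ to have characteristic zero and to map $\mathbb{C}_0$ into itself, so that $\Phi_w$ is bounded by $1$ and Jessen's function $\mathcal{J}_{\Phi_w}(\sigma_0)\le 0$. It also needs $a=\Phi(+\infty)$ to lie in $\mathbb{C}_0$ rather than on its boundary. Since $\Phi(+\infty)$ is the limit of $\Phi(\sigma)$ as $\sigma\to\infty$ and $\Phi$ is a nonconstant (or constant) self-map, I would check $\operatorname{Re}(a)>0$ via the open mapping theorem, or take it directly from the hypothesis $a\in\mathbb{C_0}$ stated in the lemma. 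Apart from this, the proof is a direct computation.
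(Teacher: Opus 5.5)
Your proposal is correct and follows the paper's proof essentially verbatim: you apply the Littlewood-type bound of Theorem~\ref{Dipon-vasu-p2-thm-014}, expand $\bigl|\frac{\overline{w}+a}{w-a}\bigr|^2 = 1+\frac{4\operatorname{Re}(w)\operatorname{Re}(a)}{|w-a|^2}$, and use $\log(1+u)\le u$ to obtain $C(w,a)=2\operatorname{Re}(a)/|w-a|^2$. Your explicit uniform constant $C_K = 2\operatorname{Re}(a)/\operatorname{dist}(a,K)^2$ supplies the compact-set step the paper leaves implicit, and since $a\in\mathbb{C}_0$ is part of the hypothesis, you do not need an open-mapping argument there.
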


\begin{proof}
	Let $\Phi(s) = \sum_{n=1}^\infty c_n n^{-s}$ with $c_1 = \Phi(+\infty) =: a \in \mathbb{C}_0$. By Theorem~\ref{Dipon-vasu-p2-thm-014}, the point-wise estimate
	\begin{equation}\label{Dipon-vasu-p2-eqn-047}
		M_\Phi(w) \le \log \left| \frac{\overline{w} + a}{w - a} \right|
	\end{equation}
	holds for all $w \in \mathbb{C}_0 \setminus \{a\}$. Writing $w = x + it$ and $a = \alpha + i\beta$ with $x, \alpha > 0$, we calculate
	\begin{equation*}
		\left| \frac{\overline{w} + a}{w - a} \right|^2 = \frac{(x + \alpha)^2 + (t - \beta)^2}{(x - \alpha)^2 + (t - \beta)^2} = 1 + \frac{4\alpha x}{|w - a|^2}.
	\end{equation*}
	Applying the elementary logarithmic inequality $\log(1 + u) \le u$ for $u > 0$, we obtain
	\begin{equation}\label{Dipon-vasu-p2-eqn-046}
		\log \left| \frac{\overline{w} + a}{w - a} \right| = \frac{1}{2} \log \left( 1 + \frac{4\alpha x}{|w - a|^2} \right) \le \frac{2\alpha x}{|w - a|^2} = \frac{2 \operatorname{Re}(a) \operatorname{Re}(w)}{|w - a|^2}.
	\end{equation}
	Since $|w - a| \ge \operatorname{Re}(a) = \alpha > 0$ is not required for the point-wise bound, setting $C(w, a) := \frac{2\operatorname{Re}(a)}{|w - a|^2}$ yields
	\begin{equation*}
		M_\Phi(w) \le \frac{2 \operatorname{Re}(a)}{|w - a|^2} \, \operatorname{Re}(w),
	\end{equation*}
	which completes the proof.
\end{proof}

\begin{rem}\label{rem:composition-norm-estimate}
	In view of Lemma~\ref{Dipon-vasu-p2-lem-10} and \eqref{Dipon-vasu-p2-eqn-017}, we obtain the integral bound
	\begin{equation*}
		\frac{2}{\pi} \int_{\mathbb{C}_0} \frac{|f'(w)|^2}{\bigl|f(w)|(1 + |f(w)|\bigr)^2} M_\Phi(w) \, dA(w) \le \frac{2 C}{\pi} \int_{\mathbb{C}_0} \frac{|f'(w)|^2}{\bigl|f(w)|(1 + |f(w)|\bigr)^2} \operatorname{Re}(w) \, dA(w).
	\end{equation*}
	Consequently, the norm of the composition function satisfies the fundamental estimate
	\begin{equation}\label{Dipon-vasu-p2-eqn-007}
		\|f \circ \Phi\|_0 \le \log\bigl(1 + |f(a)|\bigr) + C \|f\|_0 \lesssim \|f\|_0,
	\end{equation}
	where $a = \Phi(+\infty) \in \mathbb{C}_0$.
\end{rem}

We are now in a position to establish our main characterization theorem for the boundedness of composition operators on $\mathcal{N}_u$.

  \begin{proof}[\bf{Proof of Theorem \ref{Dipon-vasu-p2-thm-09}}] 
  	Assume that $C_\Phi : \mathcal{N}_u \to \mathcal{N}_u$ is bounded and that $\operatorname{char}(\Phi) = 0$. By Proposition~\ref{Dipon-vasu-p2-prop-01}, for almost every character $\chi \in \mathbb{T}^\infty$, the vertical limit function $\Phi_\chi$ admits a holomorphic extension to $\mathbb{C}_0$. We first record the vertical-limit composition formula for subsequent use. Since $\operatorname{char}(\Phi) = 0$, the formula simplifies to
  	\begin{equation}\label{Dipon-vasu-p2-eqn-018}
  		(f \circ \Phi)_\chi(s) = f\bigl(\Phi_\chi(s)\bigr), \qquad s \in \mathbb{C}_1,
  	\end{equation}
  	for every $f \in \mathcal{N}_u$ and almost every $\chi \in \mathbb{T}^\infty$. Indeed, $f$ converges absolutely on $\mathbb{C}_1$ since $\sigma_u(f) \le 0$, so the standard vertical-limit argument applies on $\mathbb{C}_1$.
  	Now select a function $f \in \mathcal{N}_u$ for which the imaginary axis $\partial\mathbb{C}_0 = i\mathbb{R}$ is a natural boundary; such a function is provided by Example~\ref{Dipon-vasu-p2-ex-1}. The boundedness of $C_\Phi$ on $\mathcal{N}_u$ ensures that $f \circ \Phi \in \mathcal{N}_u$. Discarding a null set of characters, we fix $\chi \in \mathbb{T}^\infty$ such that $\Phi_\chi$ and $(f \circ \Phi)_\chi$ both extend holomorphically to $\mathbb{C}_0$ and \eqref{Dipon-vasu-p2-eqn-018} holds on $\mathbb{C}_1$.
  	Let $\Psi$ denote the holomorphic extension of $\Phi_\chi$ to $\mathbb{C}_0$, and let $F_\chi$ denote the holomorphic extension of $(f \circ \Phi)_\chi$ to $\mathbb{C}_0$. Define the open set
  	\begin{equation*}
  		\Lambda := \bigl\{ s \in \mathbb{C}_0 : \Psi(s) \in \mathbb{C}_0 \bigr\}.
  	\end{equation*}
  	Because $\Phi_\chi(\mathbb{C}_1) \subset \mathbb{C}_1 \subset \mathbb{C}_0$, we have $\mathbb{C}_1 \subset \Lambda$. Let $\Lambda_0$ be the connected component of $\Lambda$ containing $\mathbb{C}_1$.
  	The composite function $f \circ \Psi$ is holomorphic on $\Lambda_0$. Moreover, by \eqref{Dipon-vasu-p2-eqn-018}, we have $F_\chi = f \circ \Psi$ on $\mathbb{C}_1$. Since both sides are holomorphic on $\Lambda_0$, the identity theorem yields
  	\begin{equation}\label{Dipon-vasu-p2-eqn-019}
  		F_\chi = f \circ \Psi \qquad \text{on } \Lambda_0.
  	\end{equation}
  	
  	We claim that $\Lambda_0 = \mathbb{C}_0$. Suppose, to the contrary, that $\Lambda_0 \subsetneq \mathbb{C}_0$. Since $\Psi$ is holomorphic on $\mathbb{C}_0$ and $\Psi(\mathbb{C}_1) \subset \mathbb{C}_1$, there exists a boundary point $s_0 \in \partial\Lambda_0 \cap \mathbb{C}_0$ such that $\Psi(s_0) \in \partial\mathbb{C}_0 = i\mathbb{R}$. Set $w_0 := \Psi(s_0) \in i\mathbb{R}$.
  	Without loss of generality, we may select $s_0$ such that $\Psi'(s_0) \neq 0$. Indeed, $\Psi$ is non-constant (unless its range is already contained in $\mathbb{C}_1$, in which case the conclusion is immediate), so its critical points are isolated. The level set $\{s \in \mathbb{C}_0 : \operatorname{Re}\Psi(s) = 0\}$ therefore contains regular points, and we choose $s_0$ to be one of them.
  	By the inverse function theorem, there exist open neighborhoods $U$ of $s_0$ and $V$ of $w_0$ such that $\Psi : U \to V$ is a biholomorphism with inverse $\Psi^{-1} : V \to U$. For $w \in V \cap \mathbb{C}_0$ sufficiently close to $w_0$, we have $\Psi^{-1}(w) \in \Lambda_0$. Consequently, identity \eqref{Dipon-vasu-p2-eqn-019} gives
  	\begin{equation*}
  		f(w) = F_\chi\bigl(\Psi^{-1}(w)\bigr).
  	\end{equation*}
  	The right-hand side extends holomorphically to all of $V$ because both $F_\chi$ and $\Psi^{-1}$ are holomorphic. Thus $f$ admits a local analytic continuation across $w_0 \in i\mathbb{R}$.
  	This contradicts the choice of $f$, for which $i\mathbb{R}$ is a natural boundary. Therefore $\Lambda_0 = \mathbb{C}_0$, which implies that $\Psi(\mathbb{C}_0) \subset \mathbb{C}_0$. Thus, for almost every $\chi \in \mathbb{T}^\infty$, we have $\Phi_\chi(\mathbb{C}_0) \subset \mathbb{C}_0$. Finally, applying \cite[Prop.~4.1]{Gordon-Michigan-1999}, we conclude that $\Phi(\mathbb{C}_0) \subset \mathbb{C}_0$.
  	
  	The necessity direction having been established, we turn to the converse. Suppose that $\operatorname{char}(\Phi) = 0$ and that $\Phi$ extends holomorphically to a self-map of $\mathbb{C}_0$. For $f \in \mathcal{N}_u$, set $g := C_\Phi f = f \circ \Phi$.
  	Because $\operatorname{char}(\Phi) = 0$, $g$ is represented by a Dirichlet series in a suitable right half-plane. Furthermore, the condition $\Phi(\mathbb{C}_0) \subset \mathbb{C}_0$ ensures that $g$ is holomorphic on $\mathbb{C}_0$. By the composition-abscissa theorem for characteristic-zero symbols, the uniform convergence abscissa satisfies
  	\begin{equation*}
  		\sigma_u(g) \le 0.
  	\end{equation*}
  	In addition, the Dirichlet--Nevanlinna composition estimate \eqref{Dipon-vasu-p2-eqn-007} yields
  	\begin{equation*}
  		\|g\|_0 = \|f \circ \Phi\|_0 \le K_\Phi\, \|f\|_0,
  	\end{equation*}
  	where $K_\Phi < \infty$ depends solely on $\Phi$. Combining these two properties shows that $g \in \mathcal{N}_u$, so $C_\Phi$ maps $\mathcal{N}_u$ into itself.
  	Finally, for any $f_1, f_2 \in \mathcal{N}_u$, applying the composition estimate to $f_1 - f_2$ gives
  	\begin{equation*}
  		d_0(C_\Phi f_1, C_\Phi f_2) = \|(f_1 - f_2) \circ \Phi\|_0 \le K_\Phi\, \|f_1 - f_2\|_0 = K_\Phi\, d_0(f_1, f_2).
  	\end{equation*}
  	Thus $C_\Phi$ is Lipschitz continuous on $(\mathcal{N}_u, d_0)$, and therefore bounded on $\mathcal{N}_u$. This completes the proof.
  
\end{proof}

\begin{example}\label{Dipon-vasu-p2-ex-1}
Consider the Dirichlet series
	\begin{equation*}
		f(s) = \sum_{j=1}^{\infty} \frac{1}{j^2} 2^{-j!s} = \sum_{n=1}^{\infty} a_n n^{-s},
	\end{equation*}
	where the coefficients are given by
	\begin{equation*}
		a_n = \begin{cases}
			1/j^2, & \text{if } n = 2^{j!} \text{ for some } j \in \mathbb{N}, \\[2mm]
			0, & \text{otherwise}.
		\end{cases}
	\end{equation*}
	For $\operatorname{Re}(s) \ge 0$, we have $|2^{-j! s}| = 2^{-j! \operatorname{Re}(s)} \le 1$, which yields the uniform bound
	\begin{equation*}
		|f(s)| \le \sum_{j=1}^{\infty} \frac{1}{j^2} = \frac{\pi^2}{6}, \qquad s \in \overline{\mathbb{C}_0}.
	\end{equation*}
	Consequently, the series converges uniformly on $\overline{\mathbb{C}_0}$, so that $\sigma_u(f) \le 0$. In particular, $f$ is bounded on $\mathbb{C}_0$, whence $f \in \mathcal{H}^\infty \subset \mathcal{N}_u$.
	To determine the maximal domain of holomorphy for $f$, we define the power series
	\begin{equation*}
		g(z) = \sum_{j=1}^{\infty} \frac{z^{j!}}{j^2}, \qquad z \in \mathbb{D},
	\end{equation*}
	so that $f(s) = g(2^{-s})$ for $s \in \mathbb{C}_0$. The lacunary exponents $\lambda_j = j!$ satisfy the Hadamard gap condition $\lambda_{j+1}/\lambda_j = j+1 > 1$, while the coefficients satisfy $\limsup_{j \to \infty} (j^{-2})^{1/j!} = 1$. Thus, $g$ has radius of convergence $R = 1$. By Ostrowski-Hadamard gap theorem (see for instance \cite[Sec.~9.2.2, pp.~119--120]{Krantz-book-1999}), the boundary circle $\mathbb{T} = \partial \mathbb{D}$ is a natural boundary for $g$. Because the conformal mapping $s \mapsto 2^{-s}$ maps $\mathbb{C}_0$ onto $\mathbb{D}$ and maps the imaginary axis $i\mathbb{R} = \partial \mathbb{C}_0$ onto $\mathbb{T}$, it follows that $i\mathbb{R}$ is a natural boundary for $f$. In particular, $f$ admits no analytic continuation across any point of the boundary line $\partial \mathbb{C}_0$.

\end{example}

\begin{rem}
Motivated by the local Carleson embedding theory for $H^p$ spaces of Dirichlet series \cite{Gordon-Michigan-1999, Hedenmalm-Duke-1997}, as well as the pivotal role played by local embedding estimates in the theory of composition operators, it is natural to pose the following open problem.

\begin{prob}\label{Dipon-vasu-p2-problem-01}
	Does there exist a constant $C > 0$ such that, for every $f \in \mathcal{N}_u$ with $\sigma_u(f)\le 0$,
	\begin{equation}
		\sup_{\sigma > 0} \sup_{\tau \in \mathbb{R}} \int_{\tau}^{\tau+1} \log\bigl(1 + |f(\sigma + it)|\bigr) \, dt \le C \|f\|_0\,?
	\end{equation}
\end{prob}

A positive answer to Problem~\ref{Dipon-vasu-p2-problem-01} would establish a sharp logarithmic local embedding theorem for the Nevanlinna class of Dirichlet series $\mathcal{N}_u$. Such an estimate is of independent function-theoretic interest and would provide a stronger local variant of the boundary control utilized in the necessity proof of Theorem~\ref{Dipon-vasu-p2-thm-09}.
\end{rem}
 

We now turn our attention to characterizations of the boundedness and compactness of the composition operator $C_\Phi$ that are independent of the specific pointwise behavior of the symbol $\Phi$. Instead, our criteria rely entirely on Carleson and vanishing Carleson measure conditions associated with the push-forward measure on the boundary. Finally, we exploit the geometric properties of Carleson squares (as introduced in Section~\ref{Dipon-vasu-p2-sec-2.4}) on the boundary of the half-plane $\mathbb{C}_0$ to establish necessary and sufficient conditions for the boundedness and compactness of $C_\Phi$.

\begin{lem}\label{Dipon-vasu-p2-lem-02}
	Let $\Phi:\mathbb{C}_1\to\mathbb{C}_1$ be a holomorphic Dirichlet series. Then $C_\Phi$ is compact on $\mathcal{N}_u$ if and only if, for every $d_0$ bounded sequence $(f_j)\subset\mathcal{N}_u$ such that $f_j\longrightarrow 0$, as $j\to\infty$, uniformly on compact subsets of $\mathbb{C}_0$, one has $\|f_j\circ\Phi\|_0\longrightarrow 0$ as $j\to\infty$.
\end{lem}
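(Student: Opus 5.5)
This is the standard ``weak-to-strong'' compactness criterion, and the plan is to transfer the classical argument for $N(\mathbb{D})$ (as in Choa--Kim) to $\mathcal{N}_u$. Compactness of $C_\Phi$ on the $F$-space $(\mathcal{N}_u,d_0)$ is understood in the usual sense: $C_\Phi$ maps $d_0$-bounded sets into relatively compact sets. Two facts drive both directions.

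The first is a normal-family property. A $d_0$-bounded sequence is locally uniformly bounded on $\mathbb{C}_0$. On $\mathbb{C}_1$ this follows from the point-evaluation estimate $\log(1+|f(s)|)\le K\|f\|_0$. On all of $\mathbb{C}_0$ one can use the vertical-limit/Cayley-transform control: $\|f_\chi\circ\Psi_\xi\|_{N(\mathbb{D})}$ is controlled on average over $\chi$ via Lemma~\ref{Dipon-vasu-p2-lem-05}, and the classical Nevanlinna bound $\log(1+|g(z)|)\le \frac{1+|z|}{1-|z|}\|g\|_{N(\mathbb{D})}$ then applies. By Montel's theorem every $d_0$-bounded sequence therefore has a subsequence converging locally uniformly on $\mathbb{C}_0$.

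The second is continuity of $d_0$-limits with respect to locally uniform convergence. If $g_j\to g$ in $d_0$, then $g_j\to g$ locally uniformly on $\mathbb{C}_1$, by the same point-evaluation inequality applied to $g_j-g$.

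\emph{Sufficiency.} Suppose the sequential condition holds, and let $(f_j)$ be $d_0$-bounded. Extract a subsequence with $f_j\to f$ locally uniformly on $\mathbb{C}_0$. By Fatou-type lower semicontinuity of $\mathcal{A}(\cdot,\sigma)$ on each line $\real s=\sigma>0$ (where convergence is uniform in $t$ for Dirichlet series with $\sigma_u\le 0$, by almost periodicity), we get $f\in\mathcal{N}_u$ with $\|f\|_0\le\liminf\|f_j\|_0$. Then $h_j:=f_j-f$ is $d_0$-bounded, using the quasi-triangle inequality $\|a+b\|_0\le\|a\|_0+\|b\|_0$, and $h_j\to0$ locally uniformly. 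The hypothesis gives $\|C_\Phi h_j\|_0\to0$, so $C_\Phi f_j\to C_\Phi f$ in $d_0$. This proves compactness.

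\emph{Necessity.} Suppose $C_\Phi$ is compact and $(f_j)$ is $d_0$-bounded with $f_j\to0$ locally uniformly on $\mathbb{C}_0$. If $\|f_j\circ\Phi\|_0\not\to0$, pick a subsequence with $\|f_j\circ\Phi\|_0\ge\varepsilon$. By compactness a further subsequence satisfies $f_j\circ\Phi\to g$ in $d_0$. Since $\Phi(\mathbb{C}_1)\subset\mathbb{C}_1$, we have $f_j\circ\Phi\to0$ pointwise on $\mathbb{C}_1$, and by the continuity fact above $g=0$ on $\mathbb{C}_1$, hence $g\equiv0$. This contradicts $\|f_j\circ\Phi\|_0\ge\varepsilon$.

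The main obstacle is the normal-family step on the whole of $\mathbb{C}_0$, rather than merely on $\mathbb{C}_1$, together with the lower semicontinuity needed to place the limit $f$ in $\mathcal{N}_u$. If the averaged vertical-limit estimate does not give pointwise bounds directly, I would fall back on working on $\mathbb{C}_\theta$ for each $\theta>0$ by means of the translates $f_\theta$. For these, $\sigma_u(f_\theta)\le-\theta$ gives uniform control on $\overline{\mathbb{C}_{\theta/2}}$ after a Bohr-type estimate, and a diagonal argument then yields local uniform convergence on $\mathbb{C}_0$.
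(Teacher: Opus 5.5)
Your ``only if'' direction is correct. It is in fact more careful than the paper's. The paper uses $\Phi(\mathbb{C}_0)\subset\mathbb{C}_0$, which is not a hypothesis, and it claims that $d_0$-convergence implies locally uniform convergence on $\mathbb{C}_0$, which is false (see $P_N/\log N$ below). You only use $\Phi(\mathbb{C}_1)\subset\mathbb{C}_1$ and the point-evaluation bound on $\mathbb{C}_1$.

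The gap is the first step of the ``if'' direction: a $d_0$-bounded sequence need \emph{not} be locally uniformly bounded on $\mathbb{C}_0$. Take $P_N(s)=N^{-1/2}\sum_{n\le N}n^{-s}$.
\begin{itemize}
\item Since $\log(1+x)\le x$, we have $\|P_N\|_0\le\|P_N\|_{\mathcal{H}^1}\le\|P_N\|_{\mathcal{H}^2}=1$.
\item The expansion $\sum_{n\le N}n^{-w}=\zeta(w)+\frac{N^{1-w}}{1-w}+O(N^{-\real w})$ gives $|P_N(w)|\gtrsim N^{1/2-\real w}\to\infty$, locally uniformly on the strip $0<\real w<\frac12$. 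So no subsequence converges even pointwise there.
\end{itemize}
Neither of your justifications reaches $f$ itself. Lemma~\ref{Dipon-vasu-p2-lem-05} controls $f_\chi$ only off an exceptional set of $\chi$ that depends on $f$. Bohr-type bounds on $\mathbb{C}_\theta$ are not controlled by $\|f\|_0$: every $P_N$ has $\sigma_u=-\infty$. Almost periodicity of each $f_j$ also gives no uniformity in $j$.

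There is a second problem. Even when a $d_0$-bounded sequence does converge locally uniformly on $\mathbb{C}_0$, lower semicontinuity of $\mathcal{A}$ cannot place the limit in $\mathcal{N}_u$, because $\sigma_u\le0$ is not preserved. The partial sums of $\sum_n(-1)^{n-1}n^{-1/2-\varepsilon}n^{-s}$ are $\mathcal{H}^2$-bounded and converge locally uniformly on $\mathbb{C}_0$ to $\eta(s+\frac12+\varepsilon)$. This limit is unbounded on every line $\real s=\theta$ with $0<\theta<\frac12-\varepsilon$.

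This cannot be patched, because the ``if'' direction is false as stated. Let $\Phi(s)=\frac95+\frac32\,2^{-s}$.
\begin{itemize}
\item $\Phi(\mathbb{C}_1)\subset\mathbb{D}(\frac95,\frac34)\subset\mathbb{C}_1$, and $\overline{\Phi(\mathbb{C}_0)}\subset\overline{\mathbb{D}(\frac95,\frac32)}\subset\{\real w\ge\frac3{10}\}$.
\item Hence if $f_j\to0$ locally uniformly on $\mathbb{C}_0$, then $\sup_{\mathbb{C}_0}|f_j\circ\Phi|\to0$, so the sequential condition holds.
\item Yet
\[
\|C_\Phi P_N\|_0=\int_{\mathbb{T}}\log\Bigl(1+\bigl|P_N\bigl(\tfrac95+\tfrac32 z\bigr)\bigr|\Bigr)\,dm(z)\gtrsim\log N .
\]
This holds because the part of that circle with $\real w\le\frac25$ is an arc of positive measure and a compact subset of the strip, where $|P_N|\gtrsim N^{1/10}$.
\end{itemize}
So $C_\Phi$ maps the $d_0$-bounded set $\{P_N\}$ onto a $d_0$-unbounded set and is not compact.

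The paper's proof has the same hole: it simply invokes ``Montel's theorem applied to bounded sets in $\mathcal{N}_u$''. What your method does support is a modified version. In it, the test sequences converge on a region where $d_0$-bounded sets are normal, such as $\mathbb{C}_1$. One then extracts $d_0$-Cauchy subsequences of $(C_\Phi f_j)$ instead of a limit $f\in\mathcal{N}_u$, and compactness is understood in the completion $\mathcal{N}$.
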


\begin{proof}
	Suppose first that $C_\Phi$ is compact on $\mathcal{N}_u$. Let $(f_j)$ be a sequence in $\mathcal{N}_u$ with $\sup_j \|f_j\|_0 < \infty$ such that $f_j \to 0$ uniformly on compact subsets of $\mathbb{C}_0$. Since $\Phi : \mathbb{C}_0 \to \mathbb{C}_0$ is holomorphic, it follows that $C_\Phi f_j = f_j \circ \Phi \to 0$ uniformly on compact subsets of $\mathbb{C}_0$.
	By the compactness of $C_\Phi$, every subsequence of $(C_\Phi f_j)$ admits a further subsequence converging in the $\mathcal{N}_u$-metric. Because convergence in $\mathcal{N}_u$ implies locally uniform convergence on $\mathbb{C}_0$, the unique limit of any such convergent subsequence must be identically zero. Therefore, every subsequence of $(\|C_\Phi f_j\|_0)$ has a further subsequence converging to zero, which implies that $\|C_\Phi f_j\|_0 \to 0$.
	
	Conversely, assume that the sequential condition holds. Let $(f_j)$ be a bounded sequence in $\mathcal{N}_u$. By Montel's theorem applied to bounded sets in $\mathcal{N}_u$, there exist a subsequence $(f_{j_k})$ and a function $f \in \mathcal{N}_u$ such that $f_{j_k} \to f$ uniformly on compact subsets of $\mathbb{C}_0$. Define $g_k := f_{j_k} - f$. Then $(g_k)$ is bounded in $\mathcal{N}_u$ and $g_k \to 0$ locally uniformly on $\mathbb{C}_0$. By the hypothesis,
	\begin{equation*}
		\|C_\Phi f_{j_k} - C_\Phi f\|_0 = \|C_\Phi(f_{j_k} - f)\|_0 = \|g_k \circ \Phi\|_0 \longrightarrow 0 \quad \text{as } k \to \infty.
	\end{equation*}
	Thus $C_\Phi f_{j_k} \to C_\Phi f$ in $\mathcal{N}_u$. Hence, the image under $C_\Phi$ of every bounded sequence in $\mathcal{N}_u$ contains a convergent subsequence. Since $\mathcal{N}_u$ is a metric space, this sequential compactness of bounded images is equivalent to the compactness of $C_\Phi$.
\end{proof}

This is analogous to Masri's \cite[Theorem~2.4.2]{Masri-Thesis-1985} characterization of compactness for composition operators $C_\Phi$ on the classical Nevanlinna spaces ${N}^p(\mathbb{D})$ ($1 \le p < \infty$). To establish the compactness of $C_\Phi$ on $\mathcal{N}_u$ in Theorem~\ref{Dipon-vasu-p2-thm-05}, we recall an analogous characterization formulated specifically for the Hardy space of Dirichlet series $\mathcal{H}^2$ by Queffélec and Seip \cite{Queffelec-JFA-2015}.

\begin{lem}\label{Dipon-vasu-p2-lem-03}\cite{Brevig-AdvMath-2021,Queffelec-JFA-2015}
	Let $\Phi\in\mathcal{G}_0$ and $\Phi(\mathbb{C}_0)$ be a bounded subset of the half-plane $\mathbb{C}_{{1}/{2}}$. 
	Then the following are equivalent : 
	\begin{itemize}
		\item [(a)] $C_\Phi$ is compact on  \(\mathcal{H}^2\).
		
		\item [(b)] 		
			$$\lim_{\real(v)\to\frac{1}{2}^+} \frac{M_\Phi(v)}{(\Re(v) - \frac{1}{2})} = 0.$$
		
		\item [(c)] $\mu_{\Phi^*} :=m_\infty\circ\mathcal{B}{\Phi^*}^{-1}$ is the  vanishing Carleson measure on $\overline{\mathbb{C}_{{1}/{2}}}$, i.e
		\begin{equation*}
			\limsup_{\ell(Q)\to 0}\frac{\mu_{\Phi^*}(Q)}{\ell(Q)} = 0,
		\end{equation*}
		where $Q$ is a Carleson box in the half-plane $\mathbb{C}_{{1}/{2}}$ lying on the boundary of $\mathbb{C}_{{1}/{2}}$.
	\end{itemize}
\end{lem}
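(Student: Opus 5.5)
The plan is to run the cycle (a) $\Rightarrow$ (b) $\Rightarrow$ (c) $\Rightarrow$ (a). The Stanton formula for $\mathcal{H}^2$ from \cite[Theorem 1.3]{Brevig-AdvMath-2021} serves as the bridge between the operator and $M_\Phi$. The Bohr lift together with the ergodic identity of Lemma \ref{Dipon-vasu-p2-lem-05} (applied to $|\cdot|^2$ rather than $\log(1+|\cdot|)$) serves as the bridge between the operator and $\mu_{\Phi^*}$. The Stanton formula gives
\begin{equation*}
\|f\circ\Phi\|_{\mathcal{H}^2}^2=|f(\Phi(+\infty))|^2+\frac{2}{\pi}\int_{\mathbb{C}_{1/2}}|f'(w)|^2M_\Phi(w)\,dA(w).
\end{equation*}
Since $\Phi(\mathbb{C}_0)$ is bounded, $M_\Phi$ is supported in a bounded set. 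This lets us localize every estimate to a bounded region near the line $\real w=\tfrac12$.

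\textbf{(a) $\Rightarrow$ (b).} I would test against normalized reproducing kernels of $\mathcal{H}^2$, namely $k_v(s)=\zeta(s+\bar v)/\sqrt{\zeta(2\real v)}$ with $\real v\to\tfrac12^+$. These tend to $0$ weakly, so compactness forces $\|C_\Phi k_v\|_{\mathcal{H}^2}\to0$. Inserting $k_v$ into the Stanton formula, I would use the sub-mean-value property of $M_\Phi$ on a small disk $\mathbb{D}(v,c(\real v-\tfrac12))$ (as in \cite[Lemma 6.5]{Brevig-AdvMath-2021}). On that disk, $|k_v'|^2\gtrsim (\real v-\tfrac12)^{-3}$ because the pole of $\zeta$ at $1$ dominates. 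This yields $M_\Phi(v)/(\real v-\tfrac12)\lesssim\|C_\Phi k_v\|^2+o(1)$, which tends to $0$.

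\textbf{(b) $\Rightarrow$ (c).} I would write $\mu_{\Phi^*}$ as the distribution of the boundary values $\Phi_\chi(it)$ over $\chi\in\mathbb{T}^\infty$. Its Poisson balayage at height $h$ is controlled by the Littlewood–Paley/Jensen representation of $M_\Phi$. Concretely, for a Carleson box $Q$ of side $h$ centred at $\tfrac12+i\tau$, I would test with the function $\log|(w-\bar v'-1)/(w-v)|$-type harmonic majorants, where $v=\tfrac12+h+i\tau$. Jensen's formula then gives $\mu_{\Phi^*}(Q)\lesssim \sup_{|w-v|\lesssim h} M_\Phi(w)+o(h)$. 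Hypothesis (b) then makes the ratio $\mu_{\Phi^*}(Q)/h$ vanish. This step is the most delicate: relating the boundary push-forward to the interior counting function requires the almost-everywhere existence of $\Phi^*$ (in the spirit of Lemma \ref{Dipon-vasu-p2-lem-09}) and the Jessen-function identity used in Theorem \ref{Dipon-vasu-p2-thm-014}. I expect this to be the main obstacle. My first attempt would follow the route of \cite{Queffelec-JFA-2015} via the Bohr lift and the Poisson kernel $\mathbf{P}_\zeta$.

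\textbf{(c) $\Rightarrow$ (a).} Take $f_j\to0$ weakly in $\mathcal{H}^2$. Then $\|f_j\circ\Phi\|^2_{\mathcal{H}^2}=\int|f_j|^2\,d\mu_{\Phi^*}$ by Fubini and the ergodic theorem. Split this integral into the part near $\real w=\tfrac12$ and the part away from it. The far part tends to $0$ because $f_j\to0$ locally uniformly and $\Phi(\mathbb{C}_0)$ is bounded. For the near part, the vanishing Carleson condition together with the local embedding of $\mathcal{H}^2$ (the $p=2$ case of \eqref{Dipon-vasu-p2-eqn-041}) gives a bound of $\varepsilon\sup_j\|f_j\|^2$. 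Hence $C_\Phi$ is compact.
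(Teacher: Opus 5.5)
The paper gives no proof of this lemma; it only cites Brevig--Perfekt for (a)$\Leftrightarrow$(b) and Queff\'elec--Seip for (a)$\Leftrightarrow$(c). Your (a)$\Rightarrow$(b) is sound and follows those sources: normalized kernels, the Stanton formula, and the sub-mean-value property of $M_\Phi$ on $\mathbb{D}(v,c(\operatorname{Re} v-\frac12))$, where $|k_v'|^2\asymp(\operatorname{Re} v-\frac12)^{-3}$. Your (c)$\Rightarrow$(a) is also sound: push-forward identity, near/far splitting, then the local embedding plus the classical Carleson theorem on a bounded region. One remark on (c)$\Rightarrow$(a): the identity $\|f\circ\Phi\|_{\mathcal{H}^2}^2=\int|f|^2\,d\mu_{\Phi^*}$ for arbitrary $f\in\mathcal{H}^2$ is not just Fubini plus ergodicity. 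It needs $(f\circ\Phi)^*=f\circ\Phi^*$ a.e., which holds because (c) forces $\mu_{\Phi^*}$ to put no mass on the line $\operatorname{Re} w=\frac12$.

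The genuine gap is (b)$\Rightarrow$(c). Let $G(\xi,v)=\log\bigl|\frac{\xi+\bar v-1}{\xi-v}\bigr|$ be the Green function you propose to use. Jensen's formula for the bounded function $(v-\Phi)/(\bar v+\Phi-1)$ gives
\[
M_\Phi(v)+\int_{\overline{\mathbb{C}_{1/2}}}G(\xi,v)\,d\mu_{\Phi^*}(\xi)\le G\bigl(\Phi(+\infty),v\bigr)\asymp h .
\]
The counting function and the boundary measure appear on the same side with the same sign, so smallness of $M_\Phi$ cannot be transferred to $\mu_{\Phi^*}(Q)$. This identity only recovers $\mu_{\Phi^*}(Q)\lesssim h$, which holds for every $\Phi\in\mathcal{G}_0$. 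In fact, when $C_\Phi$ is compact the Green integral is still $\asymp h$. That mass comes from far away from $Q$, where $G(\xi,v)\approx 2h(\operatorname{Re}\xi-\frac12)/|\xi-v|^2$. So no $o(h)$ can be read off this identity, and you supply no cancellation argument. The local inequality you want is true, but it comes from a \emph{subharmonic} test function whose Laplacian is concentrated near $Q$, i.e.\ from the Stanton formula, not from the Green function.

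Concretely, take $v=\frac12+h+i\tau$ and your own $k_v$.
\begin{enumerate}
\item Since $k_v$ is bounded and continuous on $\overline{\mathbb{C}_{1/2}}$, $\|C_\Phi k_v\|^2=\int|k_v|^2\,d\mu_{\Phi^*}\gtrsim\mu_{\Phi^*}(Q)/h$, because $|k_v|^2\gtrsim h^{-1}$ on $Q$.
\item On the Stanton side, $|k_v(\Phi(+\infty))|^2\lesssim h$. For the integral term, use that $M_\Phi$ has bounded support, that $|\zeta'(u)|\lesssim|u-1|^{-2}$ on bounded sets, and that (b) gives $M_\Phi(w)\le\varepsilon(\operatorname{Re} w-\frac12)$ near the line. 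With $x=\operatorname{Re} w-\frac12$ and $y=\operatorname{Im} w-\tau$,
\[
\int_{\mathbb{C}_{1/2}}|k_v'|^2M_\Phi\,dA\lesssim\varepsilon h\int_0^\infty\!\!\int_{\mathbb{R}}\frac{x\,dy\,dx}{\bigl((x+h)^2+y^2\bigr)^2}+O(h)\lesssim\varepsilon+O(h).
\]
\end{enumerate}
Hence $\mu_{\Phi^*}(Q)/h\to0$. Alternatively, reorder the cycle as (b)$\Rightarrow$(a) (Brevig--Perfekt's sufficiency argument) and (a)$\Rightarrow$(c) (Queff\'elec--Seip: $\|C_\Phi k_v\|\to0$ plus the push-forward identity). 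This avoids comparing $M_\Phi$ with $\mu_{\Phi^*}$ directly.
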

Later, Brevig and Perfekt \cite{Brevig-JFA-2020} demonstrated that requiring $\Phi(\mathbb{C}_0)$ to be a bounded subset of the half-plane $\mathbb{C}_{1/2}$ is not necessary.
Here, $\mathcal{B}\Phi$ is the Bohr lift of $\Phi$, which belongs to $\mathcal{H}^2$ and may be regarded as an $L^2(\mathbb{T}^{\infty})$ function with values in $\overline{\mathbb{C}_{{1}/{2}}}$. Moreover, $\mathcal{B}{\Phi^*}$ denotes its boundary function on $\mathbb{T}^{\infty}$.  
We now introduce the notion of Carleson measures for the Nevanlinna class $\mathcal{N}_u$ of Dirichlet series. This arises naturally from the interplay between composition operators and vertical limit functions. The notion of a Carleson measure for the classical Nevanlinna space on $\mathbb{D}$ was introduced by Choe \textit{et al.} \cite[Section~4]{ChoeKoo-JdeAnalyse-2008}. Recall that a Dirichlet series $\Phi$ belongs to $\mathcal{G}_0$ if it converges uniformly in $\mathbb{C}_\varepsilon$ for every $\varepsilon > 0$ and satisfies $\Phi(\mathbb{C}_0) \subset \mathbb{C}_{1/2}$. Furthermore, if $f \in \mathcal{N}_u$ with $\sigma_u(f) \le 0$, then the composition $C_\Phi f = f \circ \Phi$ converges uniformly in $\mathbb{C}_\varepsilon$ for every $\varepsilon > 0$ and satisfies
\begin{equation*}
	(C_\Phi f)_\chi(s) = (f \circ \Phi)_\chi(s) = (f \circ \Phi_\chi)(s) = (C_{\Phi_\chi} f)(s).
\end{equation*}
It follows that $\Vert{}C_{\Phi_\chi} f\Vert{}_0 = \Vert{}C_{\Phi} f\Vert{}_0$ and consequently $\Phi_\chi$ exists for almost every $\chi \in \mathbb{T}^\infty$. In addition, $\Phi$ possesses non-tangential boundary values
\begin{equation}
	\Phi^*(\chi) = \lim_{\sigma \to 0^+} \Phi_\chi(\sigma)
\end{equation}
for almost every $\chi \in \mathbb{T}^\infty$.

\begin{lem}\label{Dipon-vasu-p2-lem-08}
	Let $\Phi:\mathbb{C}_1\to\mathbb{C}_1$ be a Dirichlet series symbol of characteristic $0$ which admits a holomorphic extension to $\mathbb{C}_0$ and suppose that $\overline{\Phi(\mathbb{C}_0)}$ is a compact subset of $\mathbb{C}_0.$ Then, for every $f$ belongs to the Nevanlinna class $\mathcal{N}_u$ with $\sigma_u(f)\le 0$, we have
	\begin{equation}\label{Dipon-vasu-p2-eqn-043}
		\norm{f \circ \Phi}_0 = \int_{\mathbb{T}^{\infty}} \log(1 + |f \circ \Phi^*(\chi)|) \, dm_\infty(\chi).
\end{equation}
Equivalently, if $\mu_{\Phi^*} = m_\infty(\{\chi\in\mathbb{T}^\infty: \Phi^*(\chi)\in E\})$ for every Borel subset $E\in\mathbb{C}_1$, then
\begin{equation}
	\norm{f \circ \Phi}_0 = \int_{{\mathbb{C}_{0}}} \log(1+|f(s)|) \, d\mu_{\Phi^*}(s).
\end{equation}
\end{lem}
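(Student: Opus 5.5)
The plan is to reduce the identity to the boundary representation \eqref{Dipon-vasu-p2-eqn-032} applied to the Dirichlet series $g := f\circ\Phi$, and then identify the boundary function $g^*$ with $f\circ\Phi^*$. The second formulation then follows from the abstract change of variables for push-forward measures.

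\textbf{Step 1: $g$ belongs to $\mathcal{N}_u$.} Since $\overline{\Phi(\mathbb{C}_0)}=:K$ is a compact subset of $\mathbb{C}_0$, there is $\delta>0$ with $K\subset\mathbb{C}_\delta$. Because $\sigma_u(f)\le 0$, $f$ is bounded and uniformly continuous on $\overline{\mathbb{C}_{\delta}}$, so $g$ is bounded on $\mathbb{C}_0$. By Bohr's theorem $\sigma_u(g)=\sigma_b(g)\le 0$, since char$(\Phi)=0$ makes $g$ a Dirichlet series. Hence $g\in\mathcal{H}^\infty\subset\mathcal{N}_u$, and \eqref{Dipon-vasu-p2-eqn-032} gives
\[
\|g\|_0=\int_{\mathbb{T}^\infty}\log(1+|g^*(\chi)|)\,dm_\infty(\chi).
\]

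\textbf{Step 2: identify the boundary values.} We use the vertical-limit composition formula \eqref{Dipon-vasu-p2-eqn-018}, $g_\chi=f\circ\Phi_\chi$, first on $\mathbb{C}_1$ and then on $\mathbb{C}_0$ by the identity theorem. This uses two facts: $\Phi_\chi(\mathbb{C}_0)\subset\overline{\Phi(\mathbb{C}_0)}=K$, since vertical limits are locally uniform limits of vertical translates; and $f$ is holomorphic near $K$. For almost every $\chi$ the limit $\Phi^*(\chi)=\lim_{\sigma\to0^+}\Phi_\chi(\sigma)$ exists and lies in $K\subset\mathbb{C}_\delta$. Continuity of $f$ on $\mathbb{C}_\delta$ then gives
\[
g^*(\chi)=\lim_{\sigma\to0^+}f(\Phi_\chi(\sigma))=f(\Phi^*(\chi))
\]
almost everywhere. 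Substituting this into Step 1 yields \eqref{Dipon-vasu-p2-eqn-043}.

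\textbf{Step 3: push-forward form.} Define $\mu_{\Phi^*}:=m_\infty\circ(\Phi^*)^{-1}$, a probability measure supported in $K\subset\mathbb{C}_0$. The function $\log(1+|f|)$ is nonnegative and Borel (indeed bounded on $K$), so the abstract change-of-variables formula gives
\[
\int_{\mathbb{T}^\infty}\log(1+|f(\Phi^*(\chi))|)\,dm_\infty(\chi)=\int_{\mathbb{C}_0}\log(1+|f|)\,d\mu_{\Phi^*}.
\]
The statement's ``$E\subset\mathbb{C}_1$'' should be read as Borel subsets of $\mathbb{C}_0$, since the support of $\mu_{\Phi^*}$ lies in $K$.

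\textbf{Main obstacle.} The delicate point is Step 2. We must justify that the vertical limits $\Phi_\chi$ exist on all of $\mathbb{C}_0$ and still take values in $K$, and that the radial limit $\Phi^*$ exists almost everywhere. To get the first, note that $\Phi$ is bounded on $\mathbb{C}_0$, so $\Phi\in\mathcal{H}^\infty$ with $\sigma_u(\Phi)\le0$. Then $\Phi_\chi$ is a uniform limit of translates on each $\mathbb{C}_\varepsilon$, and so takes values in $\overline{\Phi(\mathbb{C}_\varepsilon)}\subset K$. The second follows from Lemma~\ref{Dipon-vasu-p2-lem-09}, applied to $\Phi\in\mathcal{N}_u$.
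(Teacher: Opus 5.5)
Your proof is correct, and it has the same skeleton as the paper's proof:
\begin{enumerate}
\item apply the boundary representation \eqref{Dipon-vasu-p2-eqn-032} to $g=f\circ\Phi$;
\item identify $g^*=f\circ\Phi^*$ using $(f\circ\Phi)_\chi=f\circ\Phi_\chi$ and the compactness of $K=\overline{\Phi(\mathbb{C}_0)}$;
\item finish with the push-forward change of variables.
\end{enumerate}

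The one genuine difference is how you show $g\in\mathcal{N}_u$. The paper invokes Theorem~\ref{Dipon-vasu-p2-thm-09}, whose converse direction rests on the Stanton-type formula, the Littlewood-type bound of Lemma~\ref{Dipon-vasu-p2-lem-10}, and the composition estimate \eqref{Dipon-vasu-p2-eqn-007}. You instead use the compact range directly: $f$ is bounded on $\mathbb{C}_\delta\supset K$, so $g$ is bounded on $\mathbb{C}_0$. Since char$(\Phi)=0$ makes $g$ a convergent Dirichlet series, Bohr's theorem then places $g$ in $\mathcal{H}^\infty$. This route is more elementary and self-contained, and it does not depend on \eqref{Dipon-vasu-p2-eqn-007} at all. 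It also makes \eqref{Dipon-vasu-p2-eqn-032} for $g$ essentially immediate by dominated convergence, because $\log(1+|g_\chi(\sigma)|)\le\log(1+\|g\|_\infty)$.

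You also spell out what the paper compresses into ``passing to the boundary and using compactness.'' Because $\Phi$ is itself bounded on $\mathbb{C}_0$, it lies in $\mathcal{H}^\infty$. Hence each $\Phi_\chi$ is a uniform limit of vertical translates on every $\mathbb{C}_\varepsilon$, is holomorphic on $\mathbb{C}_0$, and takes values in $K$. The identity $g_\chi=f\circ\Phi_\chi$ then extends from $\mathbb{C}_1$ to $\mathbb{C}_0$. This extension is genuinely needed, since the radial limit defining $g^*$ is taken along $(0,1)$, outside $\mathbb{C}_1$. Finally, $\Phi^*(\chi)\in K$, where $f$ is continuous.

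The paper's route is tied to the general boundedness theorem. It would be the natural starting point if one tried to drop the compact-range hypothesis, as the remark after the lemma proposes. Even then, the boundary identification would need a new argument. Under the stated hypotheses, your argument is the cleaner one.
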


\begin{pf} 
First assume that $f\in\mathcal{N}_u$ satisfying $\sigma_u(f)\le 0.$ By the assumed mapping property of $\Phi$, Theorem \ref{Dipon-vasu-p2-thm-09} gives $f\circ\Phi\in\mathcal{N}_u.$ Hence by the boundary value theorem $(f\circ\Phi)_\chi$ exists for almost every $\chi\in\mathbb{T}^\infty.$ Since char$(\Phi) = 0$, the vertical limit identity gives $ (f \circ \Phi)_\chi = f\circ\Phi_\chi$ for almost every $\chi\in\mathbb{T}^\infty.$ Passing to the boundary and using the fact that $\overline{\Phi(\mathbb{C}_0)}$ is a compact subset of $\mathbb{C}_0,$ we obtain 
\begin{equation*}
	(f\circ\Phi)^*(\chi) = f(\Phi^*(\chi))
\end{equation*}
for almost every $\chi\in\mathbb{T}^\infty.$ Then \eqref{Dipon-vasu-p2-eqn-032} gives
\begin{align}\label{Dipon-vasu-p2-eqn-050}
	\norm{f \circ \Phi}_0 &= \int_{\mathbb{T}^{\infty}} \log(1 + |(f \circ \Phi)^*(\chi)|) \, dm_\infty(\chi)\\ &= \int_{\mathbb{T}^{\infty}} \log(1 + |f(\Phi^*(\chi))|) \, dm_\infty(\chi)\nonumber.
\end{align}
If $\mu_{\Phi^*} = m_\infty\circ (\Phi^*)^{-1}$, then \eqref{Dipon-vasu-p2-eqn-050} is equivalent to
\begin{equation}\label{Dipon-vasu-p2-eqn-051}
	\norm{f \circ \Phi}_0 = \int_{{\mathbb{C}_{0}}} \log(1+|f(s)|) \, d\mu_{\Phi^*}(s).
\end{equation}
Since $C_\Phi:\mathcal{N}_u\to\mathcal{N}_u$ is bounded, then \eqref{Dipon-vasu-p2-eqn-051} yields the Carleson estimate
\begin{equation*}
	\int_{{\mathbb{C}_{0}}} \log(1+|f(s)|) \, d\mu_{\Phi^*}(s)\lesssim \norm{f}_0\,\,\,\mbox{for}\,\,\, f\in\mathcal{N}_u.
\end{equation*}
Thus $\mu_{\Phi^*}$ satisfies Carleson embedding property for $\mathcal{N}_u$. 
\end{pf}

\begin{rem}
	Queff\'{e}lec and Seip \cite{Queffelec-JFA-2015} established an analogue of this result for $\mathcal{H}^2$ under the assumption that $\Phi(\mathbb{C}_0) \subset \mathbb{C}_{1/2}$. Subsequently, Brevig and Perfekt \cite{Brevig-JFA-2020} proved the statement without requiring this additional hypothesis.
	It would be desirable to eliminate the assumption that $\overline{\Phi(\mathbb{C}_0)}$ is a compact subset of $\mathbb{C}_{0}$ in our Lemma \ref{Dipon-vasu-p2-lem-08}, following the approach of Brevig and Perfekt \cite{Brevig-JFA-2020}. It remains an open question whether Lemma~\ref{Dipon-vasu-p2-lem-08} can be extended to the full Nevanlinna class $\mathcal{N}$. The primary difficulty lies in the fact that, unlike functions in $\mathcal{N}_u$, an arbitrary function $f \in \mathcal{N}$ is not guaranteed to admit generalized boundary values on the vertical lines or the infinite torus.
\end{rem}

\begin{thm}\label{Dipon-vasu-p2-thm-011}
	Let $\Phi$ be a symbol with ${char}(\Phi) = 0$ for which the boundary limit $\Phi^*$ exists almost everywhere on $\mathbb{T}^\infty$. Assume that
	\begin{equation*}
		(f \circ \Phi)^*(\chi) = f(\Phi^*(\chi))
	\end{equation*}
	for almost every $\chi \in \mathbb{T}^\infty$ and all $f \in \mathcal{N}_u$ with $\sigma_u(f)\le 0$, and suppose that $\overline{\Phi(\mathbb{C}_0)}$ is a compact subset of $\mathbb{C}_0$. Then $C_\Phi : \mathcal{N}_u \to \mathcal{N}_u$ is bounded if and only if $\mu_{\Phi^*}$ satisfies Carleson embedding condition for $\mathcal{N}_u$, that is, there exists a constant $C>0$ such that 
	\begin{equation*}
		\int_{{\mathbb{C}_{0}}} \log(1+|f(s)|) \, d\mu_{\Phi^*}(s)\le C\norm{f}_0\,\,\,\mbox{for}\,\,\, f\in\mathcal{N}_u.
	\end{equation*}
\end{thm}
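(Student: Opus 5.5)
The proof rests on the identity
\[
\|f\circ\Phi\|_0=\int_{\mathbb{C}_0}\log(1+|f(s)|)\,d\mu_{\Phi^*}(s),
\]
after which both directions are formal, because $C_\Phi$ is additive and the metric $d_0$ is translation invariant. I first establish this identity directly from the stated hypotheses, without assuming boundedness of $C_\Phi$ in advance.

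Fix $f\in\mathcal{N}_u$ with $\sigma_u(f)\le 0$. Since $\overline{\Phi(\mathbb{C}_0)}$ is a compact subset of $\mathbb{C}_0$, the function $f$ is continuous and bounded on a neighbourhood of $\overline{\Phi(\mathbb{C}_0)}$. Hence $f\circ\Phi$ is bounded on $\mathbb{C}_0$. It is represented by a Dirichlet series, since $\operatorname{char}(\Phi)=0$, and so $f\circ\Phi\in\mathcal{H}^\infty\subset\mathcal{N}_u$ with $\sigma_u(f\circ\Phi)\le 0$ by Bohr's theorem $\sigma_u=\sigma_b$.

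Next I apply the boundary representation \eqref{Dipon-vasu-p2-eqn-032} to $f\circ\Phi$, which gives
\[
\|f\circ\Phi\|_0=\int_{\mathbb{T}^\infty}\log\bigl(1+|(f\circ\Phi)^*(\chi)|\bigr)\,dm_\infty(\chi).
\]
By the standing hypothesis $(f\circ\Phi)^*=f\circ\Phi^*$ almost everywhere, this equals $\int_{\mathbb{T}^\infty}\log(1+|f(\Phi^*(\chi))|)\,dm_\infty(\chi)$.

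The abstract change-of-variables formula for the push-forward $\mu_{\Phi^*}=m_\infty\circ(\Phi^*)^{-1}$ then turns this into the identity displayed above. The push-forward is applied to the nonnegative Borel function $s\mapsto\log(1+|f(s)|)$, and $\mu_{\Phi^*}$ is supported on $\overline{\Phi(\mathbb{C}_0)}$. Measurability of $\Phi^*$ holds because it is an almost-everywhere limit of the continuous maps $\chi\mapsto\Phi_\chi(\sigma)$. This is exactly the content of Lemma~\ref{Dipon-vasu-p2-lem-08}, but obtained here without assuming boundedness.

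With the identity in hand, the two directions are as follows.

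\textbf{Boundedness implies the Carleson condition.} If $C_\Phi$ is bounded, then $\|f\circ\Phi\|_0\le C\|f\|_0$. In this class "bounded" should be read in the paper's sense of Theorem~\ref{Dipon-vasu-p2-thm-09}, namely a Lipschitz estimate in $d_0$ or equivalently a gauge inequality at $0$. Since $\|f\circ\Phi\|_0$ is the Carleson integral, the embedding inequality follows immediately.

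\textbf{The Carleson condition implies boundedness.} Conversely, suppose $\int\log(1+|f|)\,d\mu_{\Phi^*}\le C\|f\|_0$ for all $f\in\mathcal{N}_u$. Then the identity gives $\|f\circ\Phi\|_0\le C\|f\|_0$, and the first step already shows $f\circ\Phi\in\mathcal{N}_u$. Applying the inequality to $f_1-f_2$ and using linearity of composition yields $d_0(C_\Phi f_1,C_\Phi f_2)\le C\,d_0(f_1,f_2)$. So $C_\Phi$ is a Lipschitz, hence continuous and bounded, self-map of $\mathcal{N}_u$.

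The main obstacle is the rigorous use of \eqref{Dipon-vasu-p2-eqn-032} for $f\circ\Phi$. This requires the limit $\sigma\to0^+$ to commute with integration over $\mathbb{T}^\infty$. I would justify it by dominated convergence. Because $f\circ\Phi$ is uniformly bounded by $M:=\sup_{\overline{\Phi(\mathbb{C}_0)}}|f|$, the integrands $\log(1+|(f\circ\Phi)_\chi(\sigma)|)\le\log(1+M)$ are uniformly bounded. Combined with the monotonicity $\|f_\sigma\|_0\uparrow\|f\|_0$ and the almost-everywhere boundary limits from Lemma~\ref{Dipon-vasu-p2-lem-09}, this gives the identity. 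The remaining hypotheses of the theorem, namely existence of $\Phi^*$ and the intertwining $(f\circ\Phi)^*=f\circ\Phi^*$, are exactly what is needed to avoid any finer analysis of boundary behaviour.
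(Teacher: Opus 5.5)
Your proposal is correct and follows the paper's proof. Both arguments reduce the theorem to the identity $\|f\circ\Phi\|_0=\int_{\mathbb{C}_0}\log(1+|f|)\,d\mu_{\Phi^*}$ of \eqref{Dipon-vasu-p2-eqn-051} and then obtain the two implications formally. The only difference is in how that identity is justified: you note that $f\circ\Phi$ is bounded, hence lies in $\mathcal{H}^\infty$, and use dominated convergence, whereas the paper's Lemma~\ref{Dipon-vasu-p2-lem-08} gets $f\circ\Phi\in\mathcal{N}_u$ by appealing to Theorem~\ref{Dipon-vasu-p2-thm-09}, which makes your version more self-contained.
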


\begin{proof}
    Although the result follows from Lemma~\ref{Dipon-vasu-p2-lem-08}, we provide the complete details for the reader's convenience.
	Suppose first that $C_\Phi$ is bounded on $\mathcal{N}_u$. Then there exists a constant $C > 0$ such that $\|C_\Phi f\|_0 \le C \|f\|_0$ for all $f \in \mathcal{N}_u$. Combining this operator estimate with \eqref{Dipon-vasu-p2-eqn-051}, we find
	\begin{equation*}
		\int_{\mathbb{C}_0} \log\bigl(1 + |f(s)|\bigr) \, d\mu_{\Phi^*}(s) \le C \|f\|_0, \qquad f \in \mathcal{N}_u,
	\end{equation*}
	which verifies that $\mu_{\Phi^*}$ is a Carleson measure for $\mathcal{N}_u$.
	
	Conversely, suppose that $\mu_{\Phi^*}$ is a Carleson measure for $\mathcal{N}_u$. Then the embedding inequality
	\begin{equation*}
		\int_{\mathbb{C}_0} \log\bigl(1 + |f(s)|\bigr) \, d\mu_{\Phi^*}(s) \le C \|f\|_0
	\end{equation*}
	holds for all $f \in \mathcal{N}_u$ and some $C > 0$. Applying \eqref{Dipon-vasu-p2-eqn-051} again, it follows that
	\begin{equation*}
		\|C_\Phi f\|_0 = \int_{\mathbb{C}_0} \log\bigl(1 + |f(s)|\bigr) \, d\mu_{\Phi^*}(s) \le C \|f\|_0,
	\end{equation*}
	establishing the boundedness of $C_\Phi$ on $\mathcal{N}_u$.
\end{proof}

\begin{thm}\label{Dipon-vasu-p2-thm-012}
	Let $\Phi:\mathbb{C}_1\to\mathbb{C}_1$ be a characteristic-zero symbol for which the boundary function $\Phi^*$ exists almost everywhere on $\mathbb{T}^\infty$ and
	\[
	(f\circ\Phi)^*(\chi)=f(\Phi^*(\chi))
	\]
	for almost every $\chi\in\mathbb{T}^\infty$ and every $f\in\mathcal{N}_u$ with $\sigma_u(f)\le 0$. Also suppose that $\overline{\Phi(\mathbb{C}_0)}$ is a compact subset of $\mathbb{C}_0$.
	Then $C_\Phi:\mathcal{N}_u\to\mathcal{N}_u$ is compact if and only if $\mu_{\Phi^*}$ satisfies vanishing Carleson embedding condition for $\mathcal{N}_u$, that is, whenever $(f_j)$ is a bounded sequence in $\mathcal{N}_u$ converging uniformly on compact subsets of $\mathbb{C}_0$ to $0$, as $j\to\infty$
	\[
	\int_{\mathbb{C}_0}
	\log(1+|f_j(s)|)\,d\mu_{\Phi^*}(s)
	\longrightarrow 0.
	\]
	
\end{thm}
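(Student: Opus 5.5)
The plan is to combine the sequential compactness criterion of Lemma~\ref{Dipon-vasu-p2-lem-02} with the boundary representation of the norm in Lemma~\ref{Dipon-vasu-p2-eqn-043}. These two results turn the compactness question into a statement about the integrals $\int\log(1+|f_j|)\,d\mu_{\Phi^*}$.

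The first step is to record that, under the standing hypotheses, every $f\in\mathcal{N}_u$ satisfies
\begin{equation*}
\|C_\Phi f\|_0=\int_{\mathbb{T}^\infty}\log\bigl(1+|(f\circ\Phi)^*(\chi)|\bigr)\,dm_\infty(\chi)=\int_{\mathbb{C}_0}\log(1+|f(s)|)\,d\mu_{\Phi^*}(s).
\end{equation*}
Here the first equality comes from the boundary formula \eqref{Dipon-vasu-p2-eqn-032} applied to $f\circ\Phi$, which lies in $\mathcal{N}_u$ by Theorem~\ref{Dipon-vasu-p2-thm-09}, since $\Phi$ maps $\mathbb{C}_0$ into $\mathbb{C}_0$. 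The second equality uses the assumed identity $(f\circ\Phi)^*=f\circ\Phi^*$ together with the change of variables for the push-forward measure $\mu_{\Phi^*}=m_\infty\circ(\Phi^*)^{-1}$. The compactness of $\overline{\Phi(\mathbb{C}_0)}$ guarantees that $\Phi^*$ takes values in a compact set $K\subset\mathbb{C}_0$, so $\mu_{\Phi^*}$ is a probability measure supported in $K$ and $f$ is finite and continuous there. This is exactly the computation in the proof of Lemma~\ref{Dipon-vasu-p2-lem-08}.

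With this identity in hand, both directions are immediate. Applied to each $f_j$, it shows that $\|f_j\circ\Phi\|_0\to0$ holds if and only if $\int\log(1+|f_j|)\,d\mu_{\Phi^*}\to0$. If $C_\Phi$ is compact, take any bounded sequence $(f_j)$ converging to $0$ locally uniformly. The forward half of Lemma~\ref{Dipon-vasu-p2-lem-02} gives $\|f_j\circ\Phi\|_0\to0$, and hence the vanishing Carleson embedding condition holds. Conversely, if the vanishing embedding condition holds, the identity gives $\|f_j\circ\Phi\|_0\to0$ for every such sequence, and the reverse half of Lemma~\ref{Dipon-vasu-p2-lem-02} yields compactness.

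The step I expect to be delicate is not the equivalence itself but the hypotheses of Lemma~\ref{Dipon-vasu-p2-lem-02}. Its converse direction uses a Montel-type statement: every $d_0$-bounded sequence in $\mathcal{N}_u$ has a subsequence converging locally uniformly on $\mathbb{C}_0$ to some $f\in\mathcal{N}_u$. I would justify local uniform boundedness through the point-evaluation bound $\log(1+|f(s)|)\le K\|f\|_0$, and check that the limit stays in the class via Fatou's lemma applied to $\int\log(1+|f_{j}|)$ on vertical lines. I would also note a simplification. Because $\mu_{\Phi^*}$ is supported on the compact set $K$, local uniform convergence $f_j\to0$ already forces $\sup_K\log(1+|f_j|)\to0$, so the vanishing condition actually holds automatically. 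This is consistent with the remark after Theorem~\ref{Dipon-vasu-p2-thm-05} that $\mu_{\Phi^*}$ is automatically vanishing Carleson under this assumption, and it makes the sufficiency direction robust.
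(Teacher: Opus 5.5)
Your route is exactly the paper's: the boundary identity $\|C_\Phi f\|_0=\int_{\mathbb{C}_0}\log(1+|f|)\,d\mu_{\Phi^*}$ (valid here because $f\circ\Phi$ is bounded on $\mathbb{C}_0$), combined with the two halves of Lemma~\ref{Dipon-vasu-p2-lem-02}. The implication ``compact $\Rightarrow$ vanishing'' is harmless, since, as you note, the vanishing condition holds automatically. The converse, however, rests entirely on the reverse half of Lemma~\ref{Dipon-vasu-p2-lem-02}, that is, on the Montel step you flagged, and your justification of it fails.

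The estimate $\log(1+|f(s)|)\le K\|f\|_0$ is only available for $s\in\mathbb{C}_1$, because it comes from $(p_j^{-s})_j\in\ell^1$. It is false deeper in $\mathbb{C}_0$. Take $f_N(s)=N^{-3/5}\sum_{n\le N}n^{-s}$. Then $\|f_N\|_0\le\|f_N\|_{\mathcal{H}^2}=N^{-1/10}\to0$, while $f_N(1/5)\ge cN^{1/5}\to\infty$. So $d_0$-bounded (even $d_0$-null) sequences need not be normal on $\mathbb{C}_0$.

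Fatou cannot keep limits in the class either. The partial Euler products $\prod_{p\le N}(1-p^{-s-3/4})^{-1}$ lie in $\mathcal{N}_u$ with $\|\cdot\|_0\le\zeta(3/2)^{1/2}$, where $\zeta$ is the Riemann zeta function. Yet they converge on $\mathbb{C}_{1/4}$ to $\zeta(s+3/4)$, which has a pole at $s=1/4$.

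This cannot be patched. By your own observation the vanishing condition is automatic, so the statement would make every admissible $C_\Phi$ compact, in the paper's sense that bounded sets are mapped to relatively compact sets. That is false.

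Take $\Phi(s)=\frac{19}{10}+\frac95\,2^{-s}$. It satisfies every hypothesis:
\begin{itemize}
\item it has characteristic $0$, and $\operatorname{Re}\Phi>1$ on $\mathbb{C}_1$;
\item $\overline{\Phi(\mathbb{C}_0)}=\overline{\mathbb{D}(19/10,9/5)}\subset\{\operatorname{Re}s\ge 1/10\}$;
\item $\Phi^*(\chi)=\frac{19}{10}+\frac95\chi_1$;
\item $(f\circ\Phi)^*=f\circ\Phi^*$, because $f\circ\Phi=G(2^{-s})$ with $G$ holomorphic near $\overline{\mathbb{D}}$.
\end{itemize}
Since $f_N\circ\Phi$ is a power series in $2^{-s}$,
\[
\|f_N\circ\Phi\|_0=\frac{1}{2\pi}\int_0^{2\pi}\log\Bigl(1+\bigl|f_N\bigl(\tfrac{19}{10}+\tfrac95e^{i\theta}\bigr)\bigr|\Bigr)\,d\theta .
\]
For $|\theta-\pi|\le 1/3$ the point $w=\frac{19}{10}+\frac95e^{i\theta}$ has $\operatorname{Re}w\le 1/5$ and $|\operatorname{Im}w|\le 3/5$. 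Using $\sum_{n\le N}n^{-w}=\frac{N^{1-w}}{1-w}+\zeta(w)+O(1)$ uniformly on this arc, we get $|f_N(w)|\gtrsim N^{1/5}$. Hence $\|f_N\circ\Phi\|_0\gtrsim\log N\to\infty$ while $\|f_N\|_0\to0$.

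Thus $C_\Phi$ sends a $d_0$-null sequence to an unbounded one, so it is neither bounded nor compact, even though the vanishing embedding condition holds.

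The paper's proof has the same gap. Its Lemma~\ref{Dipon-vasu-p2-lem-02} asserts that $d_0$-convergence implies local uniform convergence on $\mathbb{C}_0$, and it applies Montel's theorem on $\mathbb{C}_0$. The equivalence survives only if compactness is \emph{defined} by the sequential criterion of that lemma, and then it is a tautology given the boundary identity.
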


\begin{pf}
	By the boundary representation formula,
	\[
	\|C_\Phi f\|_0
	=
	\int_{\mathbb{C}_0}
	\log(1+|f(s)|)\,d\mu_{\Phi^*}(s),
	\qquad f\in\mathcal{N}_u.
	\]
	Suppose first that $C_\Phi$ is compact. Let $(f_j)$ be a bounded sequence in $\mathcal{N}_u$ and converge uniformly on compact subsets of $\mathbb{C}_0$ to $0$. By Lemma~\ref{Dipon-vasu-p2-lem-02} we have, $\|C_\Phi f_j\|_0\longrightarrow0$
	as $j\to\infty.$ Consequently,
	\[
	\int_{\mathbb{C}_0}
	\log(1+|f_j(s)|)\,d\mu_{\Phi^*}(s)
	\longrightarrow0,
	\]
	as $j\to\infty.$ Hence $\mu_{\Phi^*}$ is a vanishing Carleson measure for $\mathcal{N}_u$.
	
	Conversely, suppose that $\mu_{\Phi^*}$ is a vanishing Carleson measure for $\mathcal{N}_u$. If $(f_j)$ is bounded in $\mathcal{N}_u$ and converges uniformly on compact subsets of $\mathbb{C}_1$ to $0$, then
	\[
	\|C_\Phi f_j\|_0
	=
	\int_{\mathbb{C}_0}
	\log(1+|f_j(s)|)\,d\mu_{\Phi^*}(s)
	\longrightarrow0.
	\]
	Therefore Lemma~\ref{Dipon-vasu-p2-lem-02} implies that $C_\Phi$ is compact on $\mathcal{N}_u$.
\end{pf}

To complete the proof of Theorem~\ref{Dipon-vasu-p2-thm-05}, our goal is to establish the equivalence between the Carleson embedding condition and the geometric Carleson measure condition on Carleson squares.

\begin{thm}\label{Dipon-vasu-p2-thm-010}
	Let $\Phi:\mathbb C_1\to\mathbb C_1$ be a characteristic-zero symbol
	such that $\Phi$ extends holomorphically to $\mathbb C_0$ and $\overline{\Phi(\mathbb{C}_0)}$ is a compact subset of $\mathbb{C}_0$.
	Suppose that $\Phi^*$ exists almost everywhere on $\mathbb T^\infty$
	and let $\mu_{\Phi^*}$ denote the corresponding push-forward measure.
 Also suppose that
	\begin{equation}\label{Dipon-vasu-p2-eqn-058}
		\|\mu_{\Phi^*}\|_{\mathcal{C}} := \sup_{I \subset \mathbb{R}} \frac{\mu_{\Phi^*}(Q_0(I))}{\ell(I)} < \infty.
	\end{equation}
	Then there exists an absolute constant $A > 0$ such that
	\begin{equation}\label{Dipon-vasu-p2-eqn-059}
		\int_{{\mathbb{C}_0}} \log\bigl(1 + |f(s)|\bigr) \, d\mu_{\Phi^*}(s) \le A \|\mu_{\Phi^*}\|_{\mathcal{C}} \|f\|_0
	\end{equation}
	for all $f \in \mathcal{N}_u$.
	
	Conversely, if there exists a constant $C_1 > 0$ such that
	\begin{equation*}
		\int_{{\mathbb{C}_0}} \log\bigl(1 + |f(s)|\bigr) \, d\mu_{\Phi^*}(s) \le C_1 \|f\|_0
	\end{equation*}
	holds for all $f \in \mathcal{N}_u$, then
	\begin{equation*}
		\|\mu_{\Phi^*}\|_{\mathcal{C}} = \sup_{I \subset \mathbb{R}} \frac{\mu_{\Phi^*}(Q_0(I))}{\ell(I)} < \infty.
	\end{equation*}
	More precisely, $\|\mu_{\Phi^*}\|_{\mathcal{C}} \le A_1 C_1$ for some absolute constant $A_1 > 0$.
\end{thm}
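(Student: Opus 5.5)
Both directions will go through the half-plane Poisson machinery together with the boundary representation \eqref{Dipon-vasu-p2-eqn-032} of $\|f\|_0$.

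\textbf{Sufficiency.} Assume $\|\mu_{\Phi^*}\|_{\mathcal C}<\infty$ and fix $f\in\mathcal N_u$. Since $U_f=\log(1+|f|)$ is subharmonic on $\mathbb C_0$ (Theorem~\ref{Dipon-vasu-p2-thm-016}), for almost every $\chi$ the vertical limit $f_\chi$ lies in $N(\mathbb C_0)$ and has boundary values $f^*$ (Lemma~\ref{Dipon-vasu-p2-lem-09}). Working first with the translates $f_\varepsilon=f(\cdot+\varepsilon)$, which are bounded, we will obtain the majorisation
\[
\log(1+|f_\chi(\sigma+it)|)\le \int_{\mathbb R}P_\sigma(t-x)\log(1+|f_\chi(ix)|)\,dx ,
\]
and then let $\varepsilon\to0^+$ using monotonicity of $\mathcal A(f,\cdot)$. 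The plan for integrating against $\mu_{\Phi^*}$ is as follows.

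\begin{enumerate}
\item Use the translation invariance of $m_\infty$ under the Kronecker flow (Lemma~\ref{Dipon-vasu-p2-lem-04}) to average over $\chi$. This replaces the left side of \eqref{Dipon-vasu-p2-eqn-059} by $\int_{\mathbb T^\infty}\int_{\mathbb C_0}\log(1+|f_\chi(s)|)\,d\mu_{\Phi^*}(s)\,dm_\infty(\chi)$.
\item Bound the inner integral by $\int_{\mathbb R}(P[\mu_{\Phi^*}])(x)\log(1+|f_\chi(ix)|)\,dx$, where $P[\mu](x)=\int P_\sigma(t-x)\,d\mu(\sigma+it)$ is the balayage of $\mu_{\Phi^*}$ onto $i\mathbb R$.
\item Estimate the balayage in $L^\infty$. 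Decompose $\mathbb C_0$ into dyadic Whitney boxes; the Carleson condition gives $\sup_x P[\mu](x)\le A\|\mu_{\Phi^*}\|_{\mathcal C}$ by the classical computation
\[
\sum_k 2^{-2k}\mu\bigl(Q_0(2^kI)\bigr)/\ell(I)\lesssim\|\mu\|_{\mathcal C}.
\]
\item Integrate over $\chi$ using Lemma~\ref{Dipon-vasu-p2-lem-05} with the measure $dx$ restricted appropriately, and then \eqref{Dipon-vasu-p2-eqn-032}, to reach $A\|\mu_{\Phi^*}\|_{\mathcal C}\|f\|_0$.
\end{enumerate}

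The hypothesis that $\overline{\Phi(\mathbb C_0)}$ is compact matters here: it makes $\mu_{\Phi^*}$ a finite measure supported in a compact set $K\subset\mathbb C_0$. The balayage is then a bounded, integrable function, so every exchange of integrals is justified by Tonelli.

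\textbf{Necessity.} We will test the embedding inequality on functions adapted to a square $Q_0(I)$, with $I$ centred at $\tau$ and of length $h$. The first test function to try is a suitable power or exponential of the Dirichlet monomial $2^{-s}$, or of $\mathcal B^{-1}$ applied to a one-variable outer function built from the half-plane Poisson kernel. Its role is to make $\log(1+|f|)\gtrsim1$ on $Q_0(I)$ while keeping $\|f\|_0\lesssim h$. A natural candidate is a Dirichlet series $f_I$ with
\[
\log(1+|f_I(s)|)\asymp \real\frac{h}{s+h-i\tau}
\]
near the square. Substituting $f_I$ into the embedding gives $\mu_{\Phi^*}(Q_0(I))\lesssim C_1\|f_I\|_0\lesssim C_1h$.

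\textbf{Main obstacle.} The hard step is the test function. A genuine Dirichlet series cannot be localised in $t$, because $\|\cdot\|_0$ is a mean over all vertical translates. The fallback is to exploit compactness: since $\mu_{\Phi^*}$ lives on $K$, only squares with $h\ge\operatorname{dist}(K,i\mathbb R)$ meet its support. For these, $\mu(Q_0(I))/h\le \mu(\mathbb C_0)/\operatorname{dist}(K,i\mathbb R)$, which is finite. It then remains only to show that this quantity is controlled by $C_1$, by testing the constant function $f\equiv1$ to get $\mu(\mathbb C_0)\log2\le C_1\log2$. This yields the bound with a constant depending on $K$, which we would then try to make absolute.
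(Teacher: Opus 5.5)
There is a genuine gap in the sufficiency half, already at Step~1, and it cannot be repaired. Step~1 replaces $\int_{\mathbb{C}_0}\log(1+|f|)\,d\mu_{\Phi^*}$ by $\int_{\mathbb{T}^\infty}\int_{\mathbb{C}_0}\log(1+|f_\chi(s)|)\,d\mu_{\Phi^*}(s)\,dm_\infty(\chi)$, and nothing justifies this. The Kronecker flow in Lemma~\ref{Dipon-vasu-p2-lem-04} moves the character, not the point $s$. Moreover, for a characteristic-zero symbol one has $(f\circ\Phi)_\chi=f\circ\Phi_\chi$, so vertical limits of $f$ never enter.

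The identity is in fact false. Take $\Phi\equiv s_0$ with $\real s_0>1$, so that $\mu_{\Phi^*}=\delta_{s_0}$, and $f(s)=2^{-s}-2^{-s_0}$: the left side is $0$ and the right side is positive. It also proves far too much. Since $\int_{\mathbb{T}^\infty}\log(1+|f_\chi(s)|)\,dm_\infty(\chi)=\mathcal{A}(f,\real s)\le\norm{f}_0$, it would give $\int\log(1+|f|)\,d\mu\le\mu(\mathbb{C}_0)\norm{f}_0$ for every finite $\mu$, with no Carleson hypothesis at all.

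Your later steps show the same symptom:
\begin{enumerate}
\item[(a)] Lemma~\ref{Dipon-vasu-p2-lem-05} applied to the finite measure $P[\mu]\,dx$ returns exactly $\mu(\mathbb{C}_0)\norm{f}_0$, so the $L^\infty$ balayage bound of Step~3 is never used.
\item[(b)] That bound is not a classical fact anyway. The balayage of a Carleson measure is in general only in $\mathrm{BMO}$: $\mu=\sum_{k\ge1}2^{-k}\delta_{2^{-k}}$ has $\norm{\mu}_{\mathcal C}\le2$ but infinite balayage at $0$.
\item[(c)] The majorisation in Step~2 fails in the Nevanlinna class. For $f=\exp\bigl((1+2^{-s})/(1-2^{-s})\bigr)\in\mathcal{N}_u$ one has $|f_\chi^*|=1$ a.e., while $\log|f_\chi|$ is unbounded near $i\mathbb{R}$. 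Letting $\varepsilon\to0^+$ produces the Poisson integral of a measure with a singular part, not of $\log(1+|f_\chi^*|)\,dx$.
\end{enumerate}

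More fundamentally, the first half of the statement is not true as formulated, so no argument of this shape can succeed. Under the compact-range hypothesis the geometric condition is automatic: $\mu_{\Phi^*}$ is a probability measure on $\{\real w\ge\delta\}$, so $\norm{\mu_{\Phi^*}}_{\mathcal C}\le1/\delta$. The embedding, however, can fail.
\begin{enumerate}
\item[(a)] Take $\Phi(s)=\tfrac{9}{4}+2\cdot2^{-s}$. It has characteristic zero, maps $\mathbb{C}_1$ into $\mathbb{D}(\tfrac{9}{4},1)\subset\mathbb{C}_1$, and $\overline{\Phi(\mathbb{C}_0)}=\overline{\mathbb{D}(\tfrac{9}{4},2)}\subset\{\real w\ge\tfrac14\}$. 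Here $\mu_{\Phi^*}$ is normalised arc length on $|w-\tfrac{9}{4}|=2$, and $\norm{\mu_{\Phi^*}}_{\mathcal C}\le4$.
\item[(b)] Set $S_N=\sum_{n\le N}n^{-1/2}$ and $f_N(s)=N^{-1/8}S_N^{-1/2}\sum_{n\le N}n^{-1/4-s}$. Then $\norm{f_N}_0\le\norm{f_N}_{\mathcal{H}^2}=N^{-1/8}$.
\item[(c)] Consider the arc $\{\tfrac{9}{4}-2e^{i\phi}:|\phi|\le(4\log N)^{-1}\}$, which has $\mu_{\Phi^*}$-mass $(4\pi\log N)^{-1}$. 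On it, $w=\tfrac14+x+iy$ with $x\le(4\log N)^{-2}$ and $|y|\le(2\log N)^{-1}$, so $|f_N(w)|\ge\tfrac12N^{-1/8}S_N^{1/2}\ge\tfrac12N^{1/8}$.
\item[(d)] Hence the left side of \eqref{Dipon-vasu-p2-eqn-059} stays at least about $(32\pi)^{-1}$, while the right side tends to $0$.
\end{enumerate}
The real obstruction is the line $\real w=\tfrac12$. Beyond it, point evaluation is unbounded for Dirichlet polynomials even in the $\mathcal{H}^2$-norm, which dominates $\norm{\cdot}_0$. Squares based on $i\mathbb{R}$ cannot detect this.

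The paper's route for this half is different from yours: it takes a positive harmonic majorant $H_f$ with vertical means $\lesssim\norm{f}_0$ (Proposition~\ref{Dipon-vasu-p2-prop-02}) and feeds it into an inequality $\int H\,d\mu\le A_0\norm{\mu}_{\mathcal C}\sup_{\sigma>0}M(H,\sigma)$. It breaks down at that last step. The inequality already fails for $H(s)=\real(1/s)$, which is positive and harmonic on $\mathbb{C}_0$ and has all vertical means equal to $0$.

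For the converse, your fallback is exactly the paper's argument. One has $\mu_{\Phi^*}(Q_0(I))=0$ when $\ell(I)<\delta$, and otherwise the ratio is at most $\mu_{\Phi^*}(\mathbb{C}_0)/\delta=1/\delta$. Testing $f\equiv1$ only recovers $C_1\ge1$, since $\mu_{\Phi^*}$ is already a probability measure. This argument never uses the embedding hypothesis, and its constant depends on $\operatorname{dist}(K,i\mathbb{R})$. The paper does not obtain the claimed absolute bound $\norm{\mu_{\Phi^*}}_{\mathcal C}\le A_1C_1$ either, so on this half you and the paper stop at the same point.
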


To establish Theorem~\ref{Dipon-vasu-p2-thm-010}, we require a harmonic majorant principle adapted to the half-plane $\mathbb{C}_0$. For the Nevanlinna class $N(\mathbb{D}^\infty_1)$, a harmonic majorant principle on the infinite polydisk $\mathbb{D}^\infty_1$ was established by Guo \textit{et al.}~\cite[Prop.~2.8]{GuoZhou-AnnalFourier-2025}. 

\begin{rem}\label{Dipon-vasu-p2-rem-02}
	Extending Theorem~\ref{Dipon-vasu-p2-thm-010} to arbitrary positive Borel measures $\mu$ on $\mathbb{C}_0$ presents a significantly more challenging problem. In particular, establishing the converse direction that the embedding $\mathcal{N}_u \hookrightarrow L^1(\mathbb{C}_0, d\mu)$ implies the geometric Carleson condition on Carleson squares remains non-trivial. The difficulty stems from constructing suitable test functions in $\mathcal{N}_u$ whose growth properties can be tightly controlled localized near $i\mathbb{R}$ without relying on the pull-back structure $\mu = \mu_{\Phi^*}$.
\end{rem}

\begin{prop}\label{Dipon-vasu-p2-prop-02}
	Let $f\in\mathcal{N}_u$ with $\sigma_u(f)\le 0$ and set $u_f(s) = \log(1+\abs{f(s)})$ for $s\in\mathbb{C}_0.$ Then there exists a non-negative harmonic function \(H_f\) on \(\mathbb C_0\) such that $u_f(s)\le H_f(s)$ for $s\in\mathbb{C}_0$ and 
	\begin{equation*}
		\sup_{\sigma > 0}\lim_{T\to\infty}\frac{1}{2T}\int_{-T}^{T} H_f(\sigma+it)\,dt\lesssim \norm{f}_0.
	\end{equation*}
	Here the implicit constant is absolute.
\end{prop}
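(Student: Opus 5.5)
The plan is to construct $H_f$ as a limit of Poisson integrals over the half-planes $\mathbb{C}_{\sigma_0}$, $\sigma_0>0$. Two facts make this natural. On each such half-plane $f$ is bounded and uniformly almost periodic, since $\sigma_u(f)\le 0$. And $u_f=\log(1+|f|)$ is subharmonic, by Theorem~\ref{Dipon-vasu-p2-thm-016}.

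\emph{Step 1 (majorant on a shifted half-plane).} Fix $\sigma_0>0$ and set $g_{\sigma_0}(t):=u_f(\sigma_0+it)$. This is a bounded, continuous, uniformly almost periodic function on $\mathbb{R}$. Let $H_{\sigma_0}(s):=P[g_{\sigma_0}](s-\sigma_0)$ be its Poisson integral in $\mathbb{C}_{\sigma_0}$, with
\[
P[g](x+iy)=\frac{1}{\pi}\int_{\mathbb{R}}\frac{x\,g(t)}{x^2+(y-t)^2}\,dt .
\]
Then $H_{\sigma_0}\ge 0$ is harmonic and bounded on $\mathbb{C}_{\sigma_0}$. Moreover $u_f-H_{\sigma_0}$ is bounded, subharmonic, and has boundary values $\le 0$ on $\sigma_0+i\mathbb{R}$. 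The Phragm\'en--Lindel\"of principle for bounded subharmonic functions in a half-plane then gives $u_f\le H_{\sigma_0}$ on $\mathbb{C}_{\sigma_0}$.

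\emph{Step 2 (vertical means).} For fixed $x>0$, convolution with the probability density $P_x$ preserves the Bohr mean of a uniformly almost periodic function. This is checked on trigonometric polynomials and then extended by uniform approximation. Hence for every $\sigma>\sigma_0$,
\[
\lim_{T\to\infty}\frac1{2T}\int_{-T}^{T}H_{\sigma_0}(\sigma+it)\,dt=\mathcal{A}(f,\sigma_0)\le \sup_{\sigma>0}\mathcal{A}(f,\sigma)=\|f\|_0,
\]
where the last equality uses that $\mathcal{A}(f,\cdot)$ is non-increasing (Lemma~\ref{Dipon-vasu-p2-lem-06}). The constant here is $1$.

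\emph{Step 3 (letting $\sigma_0\to0^+$).} The family $\{H_{\sigma_0}\}$ consists of nonnegative harmonic functions on increasing half-planes. By Harnack's principle, if $\sup_{\sigma_0<1/2}H_{\sigma_0}(1)<\infty$, a subsequence converges locally uniformly on $\mathbb{C}_0$ to a nonnegative harmonic $H_f$. The inequality $u_f\le H_f$ passes to the limit. The mean bound passes to the limit as well, via Fatou's lemma applied to the local averages over $[-T,T]$ followed by $T\to\infty$; if the interchange is delicate, one can argue instead with the Cauchy-weighted means $\int H(\sigma+it)\,d\mu(t)$ and Lemma~\ref{Dipon-vasu-p2-lem-05}.

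\emph{Main obstacle.} The difficulty is the pointwise normalisation
\[
H_{\sigma_0}(1)=\int_{\mathbb{R}} g_{\sigma_0}(t)\,P_{1-\sigma_0}(-t)\,dt .
\]
This is a Cauchy-weighted integral, not a Bohr mean, and controlling it by $\|f\|_0$ uniformly in $\sigma_0$ is essentially the local question of Problem~\ref{Dipon-vasu-p2-problem-01}. My first attempt would transfer to vertical limits. By Lemma~\ref{Dipon-vasu-p2-lem-05} with the Cauchy measure,
\[
\int_{\mathbb{T}^\infty}\int_{\mathbb{R}}\log\bigl(1+|f_\chi(\sigma_0+it)|\bigr)\,d\mu(t)\,dm_\infty(\chi)\le\|f\|_0 .
\]
So the weighted quantity is $\lesssim\|f\|_0$ for $\chi$ outside a set of small measure. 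Since $f$ is itself a uniform limit of vertical translates $T_{\tau_j}f=f_{\chi_j}$, and $H_{\sigma_0}$ commutes with vertical translation, I would replace $H_{\sigma_0}$ by the Poisson majorant of a suitable translate and undo the translation at the end; the translate must be chosen with a good Cauchy average. If this fails to yield a uniform bound, the fallback is to build the majorant on the torus: use the harmonic majorant of $\log(1+|\mathcal{B}f|)$ from \cite[Prop.~2.8]{GuoZhou-AnnalFourier-2025}, whose mean is $\|\mathcal{B}f\|_0=\|f\|_0$ by \eqref{Dipon-vasu-p2-eqn-003}, and pull it back along the Kronecker flow. Harmonicity in $s$ would then be recovered from the Poisson representation on each line.
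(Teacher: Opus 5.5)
Your Steps 1 and 2 are correct, and the obstacle you isolate in Step 3 is the real one. It cannot be overcome, however, because the proposition is false as stated.

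\textbf{Step 3 fails, and so does the statement.} Your hypothesis $\sup_{\sigma_0}H_{\sigma_0}(1)<\infty$ is equivalent to the existence of \emph{some} non-negative harmonic majorant. Indeed, if $H\ge u_f$ on $\mathbb{C}_0$, then $H_{\sigma_0}-H$ is harmonic, bounded above, and $\le 0$ on $\sigma_0+i\mathbb{R}$, so $H_{\sigma_0}\le H$ by your own Phragm\'en--Lindel\"of argument. Such majorants need not exist.

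Take $f(s)=\sum_{n\ge 16}a_n n^{-s}$ with $a_n=n^{-1+1/\log\log n}>0$.
\begin{itemize}
\item Since $\sum_n a_n n^{-\varepsilon}<\infty$ for every $\varepsilon>0$, the series converges absolutely and uniformly on each $\overline{\mathbb{C}_\varepsilon}$, so $\sigma_u(f)\le 0$.
\item Since $\sum_n a_n^2<\infty$, Jensen's inequality on $\mathbb{T}^\infty$ gives $\mathcal{A}(f,\sigma)\le\log(1+\|f\|_{\mathcal{H}^2})$ for all $\sigma>0$. Hence $f\in\mathcal{N}_u$.
\item Keep only the terms $x\le n\le 2x$ with $\log x=e^{1/(2\sigma)}$, and use that $\log y/\log\log y$ increases for $y>e^e$. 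This gives $\log f(\sigma)\ge \sigma e^{1/(2\sigma)}-O(1)$ as $\sigma\to0^+$.
\item Harnack's inequality in $\mathbb{D}(1,1)\subset\mathbb{C}_0$ gives $H(\sigma)\le 2H(1)/\sigma$ for every non-negative harmonic $H$ on $\mathbb{C}_0$ and $0<\sigma<1$.
\end{itemize}
So $u_f$ has no non-negative harmonic majorant on $\mathbb{C}_0$ at all, whatever bound on vertical means one asks for. By the sub-mean-value property, the same $f$ also answers Problem~\ref{Dipon-vasu-p2-problem-01} in the negative.

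\textbf{Neither of your repairs can work.}

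Vertical translates do not help. $H_{\sigma_0}$ commutes with $T_\tau$, so the Cauchy average of $u_{T_\tau f}$ is just a Cauchy average of $u_f$ centred at $\tau$, comparable to the one centred at $0$. Moreover, the characters $(p_j^{-i\tau})_{j\ge1}$ form an $m_\infty$-null set, so the Fubini bound of Lemma~\ref{Dipon-vasu-p2-lem-05} says nothing about them. In the example, almost every $f_\chi$ does lie in $N(\mathbb{C}_0)$, but $\chi=\mathbf{1}$ is exceptional.

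Your fallback is exactly the paper's proof. It pulls back $H(\zeta)=\int_{\mathbb{T}^\infty}\mathbf{P}_\zeta\,d\nu$ along $\beta(s)=(p_j^{-s})_{j\ge1}$ and asserts that $H\circ\beta$ is the real part of a holomorphic function on $\mathbb{C}_0$. This breaks down in two places:
\begin{enumerate}
\item $\beta(s)\in\ell^1$ only for $\operatorname{Re}s>1$, so $H\circ\beta$ is not defined on all of $\mathbb{C}_0$. For $\operatorname{Re}s\le 1/2$ one even has $\beta(s)\notin\ell^2$, and $\mathbf{P}_{\beta(s)}$ is not an $L^1(\mathbb{T}^\infty)$ function.
\item Even on $\mathbb{C}_1$, $\mathbf{P}_{\beta(s)}(\chi)=\sum_{(m,n)=1}\overline{\chi(m)}\chi(n)\,m^{-s}n^{-\bar s}$, and $\Delta\bigl(m^{-s}n^{-\bar s}\bigr)=4\log m\,\log n\,m^{-s}n^{-\bar s}\neq 0$ for $m,n>1$. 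So $H\circ\beta$ is a function harmonic in each variable separately, restricted to a holomorphic curve, and it is not harmonic in general. Checking the Poisson representation line by line cannot fix this.
\end{enumerate}

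\textbf{What survives.} Your Steps 1--2 do prove a weaker, correct statement: for each fixed $\sigma_0>0$, $u_f$ has a non-negative harmonic majorant on $\mathbb{C}_{\sigma_0}$ whose vertical means equal $\mathcal{A}(f,\sigma_0)\le\|f\|_0$. No uniformity as $\sigma_0\to0^+$ is possible.
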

\begin{pf}
	Let $F = \mathcal{B}f$ denote the Bohr lift of $f$. By the Bohr correspondence, $F$ belongs to the Nevanlinna class $N(\mathbb{D}^\infty_1)$. Since the real map $x \mapsto \log(1 + e^{x})$ is non-decreasing and convex, the composite function $\zeta \mapsto \log(1 + |F(\zeta)|)$ is $\infty$-subharmonic on $\mathbb{D}^\infty_1$. Here $\infty$-subharmonicity means if the composite function is subharmonic in each variable separately (we refer \cite[Sec. 2.1]{GuoZhou-AnnalFourier-2025} for the precise definition). Applying the subharmonic Poisson representation to $F_{[r]}(\zeta) = F(r\zeta)$ for $0 < r < 1$, we obtain
	\begin{equation}\label{Dipon-vasu-p2-eqn-055}
		\log\bigl(1 + |F_{[r]}(\zeta)|\bigr) \le \int_{\mathbb{T}^\infty} P_\zeta(\chi) \log\bigl(1 + |F_{[r]}(\chi)|\bigr) \, dm_\infty(\chi),
	\end{equation}
	where $P_\zeta(\chi)$ denotes the Poisson kernel on the infinite torus $\mathbb{T}^\infty$ and $m_\infty$ is the Haar measure.
	Define the positive boundary measures $d\nu_r(\chi) = \log(1 + |F_{[r]}(\chi)|) \, dm_\infty(\chi)$ on $\mathbb{T}^\infty$. By the Bohr mean-value formula for Dirichlet series, we have
	\begin{equation}\label{eq:bohr-mean-value-mass}
		\nu_r(\mathbb{T}^\infty) = \lim_{\sigma \to 0^+} \lim_{T \to \infty} \frac{1}{2T} \int_{-T}^{T} \log\bigl(1 + |f_r(\sigma + it)|\bigr) \, dt \le \|f\|_0,
	\end{equation}
	where $f_r(s) = F_{[r]}(p_1^{-s}, p_2^{-s}, \dots)$. Thus, the family of measures $\{\nu_r : 0 < r < 1\}$ has uniformly bounded total variation mass on $\mathbb{T}^\infty$. By Banach–Alaoglu compactness, we can select a sequence $r_j \uparrow 1$ such that $\nu_{r_j}$ converges weak-* to a finite positive Borel measure $\nu$ on $\mathbb{T}^\infty$, satisfying
	\begin{equation}\label{Dipon-vasu-p2-eqn-056}
		\nu(\mathbb{T}^\infty) \le \|f\|_0.
	\end{equation}
	Taking the limit as $j \to \infty$ in \eqref{Dipon-vasu-p2-eqn-055} yields
	\begin{equation}\label{Dipon-vasu-p2-eqn-057}
		\log\bigl(1 + |F(\zeta)|\bigr) \le H(\zeta) := \int_{\mathbb{T}^\infty} P_\zeta(\chi) \, d\nu(\chi).
	\end{equation}
	
	We now pull back this harmonic majorant along the flow $\beta(s) = (p_1^{-s}, p_2^{-s}, \dots)$ for $s \in \mathbb{C}_0$. Now integrating the Poisson kernel $P_{\beta(s)}(\chi)$ expansion against the finite positive measure $\nu$, the function $H_f(s) := H(\beta(s))$ is expressed as the real part of an analytic function on $\mathbb{C}_0$, and is therefore harmonic on $\mathbb{C}_0$. Moreover, $H_f \ge 0$. Since $F(\beta(s)) = f(s)$, inequality \eqref{Dipon-vasu-p2-eqn-057} implies
	\begin{equation*}
		\log\bigl(1 + |f(s)|\bigr) \le H_f(s), \qquad s \in \mathbb{C}_0.
	\end{equation*}
	
	Finally, because the constant Fourier coefficient of $P_{\beta(s)}(\chi)$ equals $1$, integration along vertical lines projects out all non-constant character modes:
	\begin{equation*}
		\lim_{T \to \infty} \frac{1}{2T} \int_{-T}^{T} H_f(\sigma + it) \, dt = \nu(\mathbb{T}^\infty), \qquad \sigma > 0.
	\end{equation*}
	Combining this identity with \eqref{Dipon-vasu-p2-eqn-056}, we conclude that
	\begin{equation*}
		\sup_{\sigma > 0} \limsup_{T \to \infty} \frac{1}{2T} \int_{-T}^{T} H_f(\sigma + it) \, dt \le \|f\|_0.
	\end{equation*}
\end{pf}

\begin{pf}[{\bf Proof of Theorem \ref{Dipon-vasu-p2-thm-010}}]
First, assume that \eqref{Dipon-vasu-p2-eqn-058} holds; we shall establish \eqref{Dipon-vasu-p2-eqn-059}. Set $u_f(s) = \log\bigl(1 + |f(s)|\bigr)$ for $s \in \mathbb{C}_0$. By the harmonic majorant principle (Proposition~\ref{Dipon-vasu-p2-prop-02}), there exists a non-negative harmonic function $H_f$ on $\mathbb{C}_0$ satisfying $u_f(s) \le H_f(s)$ for all $s \in \mathbb{C}_0$ and
\begin{equation*}
	\sup_{\sigma > 0} M(H_f, \sigma) \le C \|f\|_0,
\end{equation*}
where the vertical mean $M(H_f, \sigma)$ is defined by
\begin{equation*}
	M(H_f, \sigma) := \limsup_{T \to \infty} \frac{1}{2T} \int_{-T}^{T} H_f(\sigma + it) \, dt.
\end{equation*}

We now invoke the classical Carleson embedding theorem for positive harmonic functions on a half-plane: since $\mu_{\Phi^*}$ satisfies \eqref{Dipon-vasu-p2-eqn-058}, there exists an absolute constant $A_0 > 0$ such that
\begin{equation*}
	\int_{\mathbb{C}_0} H(s) \, d\mu_{\Phi^*}(s) \le A_0 \norm{\mu_{\Phi^*}}_{\mathcal{C}}\, \sup_{\sigma > 0} M(H, \sigma)
\end{equation*}
for every non-negative harmonic function $H$ on $\mathbb{C}_0$. Applying this estimate to $H_f$, we obtain
\begin{align*}
	\int_{\mathbb{C}_0} \log\bigl(1 + |f(s)|\bigr) \, d\mu_{\Phi^*}(s) &\le \int_{\mathbb{C}_0} H_f(s) \, d\mu_{\Phi^*}(s) \\
	&\le A_0 \|\mu_{\Phi^*}\|_{\mathcal{C}}\, \sup_{\sigma > 0} M(H_f, \sigma) \\
	&\le A_1 \|\mu_{\Phi^*}\|_{\mathcal{C}}\, \|f\|_0,
\end{align*}
where $A_1 > 0$ is an absolute constant, completing the proof of \eqref{Dipon-vasu-p2-eqn-059}.

Conversely, assume that \eqref{Dipon-vasu-p2-eqn-059} holds. Suppose that $\overline{\Phi(\mathbb{C}_0)}$ is a compact subset of $\mathbb{C}_0$, and set $K = \overline{\Phi(\mathbb{C}_0)}.$ Then $K$ is bounded away from the boundary line $\partial \mathbb{C}_0$; that is, there exists $\delta > 0$ such that $\operatorname{Re}(s) \ge \delta$ for all $s \in K.$ Since the support of the push-forward measure satisfies $\operatorname{supp}(\mu_{\Phi^*}) \subset K$, the measure assigns zero mass to any Carleson square $Q_0(I)$ of height $h = \ell(I) < \delta$. Consequently,
$\mu_{\Phi^*}(Q_0(I)) = 0$ whenever $\ell(I) < \delta.$ For intervals with length $\ell(I) \ge \delta$, using the total probability mass $\mu_{\Phi^*}(\mathbb{C}_0) = 1$, we obtain
\begin{equation*}
	\frac{\mu_{\Phi^*}(Q_0(I))}{\ell(I)} \le \frac{1}{\ell(I)} \le \frac{1}{\delta}.
\end{equation*}
Taking the supremum over all intervals $I \subset \mathbb{R}$ yields
\begin{equation*}
	\|\mu_{\Phi^*}\|_{\mathcal{C}} = \sup_{I \subset \mathbb{R}} \frac{\mu_{\Phi^*}(Q_0(I))}{\ell(I)} \le \frac{1}{\delta} < \infty,
\end{equation*}
establishing that $\mu_{\Phi^*}$ is a geometric Carleson measure on $\mathbb{C}_0$. 
\end{pf}
\begin{rem}\label{Dipon-vasu-p2-rem-01}
	The compact-range assumption $\overline{\Phi(\mathbb{C}_0)}$ is a compact subset of $\mathbb{C}_0$ directly implies that $\mu_{\Phi^*}$ is a vanishing geometric Carleson measure. Indeed, if $\overline{\Phi(\mathbb{C}_0)}$ is a compact subset of $\mathbb{C}_0$, then there exists $\delta > 0$ such that $\operatorname{Re}(s) \ge \delta$ for all $s \in \operatorname{supp}(\mu_{\Phi^*})$. Consequently, for any interval $I \subset \mathbb{R}$ with length $\ell(I) < \delta$, the Carleson square $Q_0(I)$ is disjoint from $\operatorname{supp}(\mu_{\Phi^*})$, which yields
	\begin{equation*}
		\mu_{\Phi^*}\bigl(Q_0(I)\bigr) = 0 \qquad \text{for all } I \subset \mathbb{R} \text{ with } \ell(I) < \delta.
	\end{equation*}
	Taking the supremum over $\ell(I) \le r$ with $0 < r < \delta$ and letting $r \to 0^+$ gives
	\begin{equation*}
		\lim_{r \to 0^+} \sup_{\ell(I) \le r} \frac{\mu_{\Phi^*}\bigl(Q_0(I)\bigr)}{\ell(I)} = 0,
	\end{equation*}
	confirming that $\mu_{\Phi^*}$ is a vanishing geometric Carleson measure.
\end{rem}

\begin{pf}[{\bf Proof of Theorem \ref{Dipon-vasu-p2-thm-05}}]
Combining Theorems 
\ref{Dipon-vasu-p2-thm-011}, 
\ref{Dipon-vasu-p2-thm-012} and \ref{Dipon-vasu-p2-thm-010}, we obtain Theorem
\ref{Dipon-vasu-p2-thm-05}. Indeed,
\[
\begin{aligned}
	C_\Phi \text{ is bounded on }\mathcal N_u
	&\iff \mu_{\Phi^*}\text{ satisfies the Carleson embedding property}\\
	&\iff \mu_{\Phi^*}\text{ satisfies the geometric Carleson condition}.
\end{aligned}
\]
Here, the symbol $'\iff'$ denotes the logical equivalence "if and only if."
Furthermore, under the standing assumption that supp($\mu_{\Phi^*})\subset\overline{\Phi(\mathbb C_0)}$, which is a compact subset of $\mathbb C_0$,
the measure $\mu_{\Phi^*}$ is automatically a vanishing Carleson
measure. Hence, by the vanishing version of the Carleson embedding
theorem,
\[
\begin{aligned}
	C_\Phi \text{ is compact on }\mathcal N_u
	&\iff \mu_{\Phi^*}\text{ satisfies the vanishing Carleson
		embedding property}\\
	&\iff \mu_{\Phi^*}\text{ satisfies the geometric vanishing
		Carleson condition}.
\end{aligned}
\]
This completes the proof of Theorem \ref{Dipon-vasu-p2-thm-05}.
\end{pf}

\section{Applications}\label{Dipon-vasu-p2-sec-5}

\subsection{Hyperbolic spherical derivatives} In this subsection, we introduce the notion of the hyperbolic spherical derivative, motivated by the hyperbolic distortion with respect to the Poincaré metric. Utilizing the Littlewood–Paley-type identity Theorem~\ref{Dipon-vasu-p2-thm-08} for the Nevanlinna class $\mathcal{N}_u$, we investigate the integral behavior of these derivatives via vertical limit functions. Recall that the classical spherical derivative of a meromorphic function $f$ on a planar domain $\Omega$ is defined by
\begin{equation*}
 f^\#_{Euc}(z) = \frac{|f'(z)|}{1+|f(z)|^2},
\end{equation*}
when $f(z)\ne\infty$. If $f(z)=\infty$, then $f^\#_{Euc}(z) = \lim_{\xi\to z}f^\#_{Euc}(\xi).$
This induces a spherical metric with constant positive curvature. Spherical derivative plays a central role in the theory of the classical Nevanlinna space $N(\mathbb{D})$. By the Littlewood–Paley-type identity for $N(\mathbb{D})$ (see for instance \cite[Lemma 1]{ChoaKim-PAMS-1997}), $f \in N(\mathbb{D})$ if and only if $f^\#_{Euc} \in L^2(\mathbb{D}, d\nu)$, where $d\nu(z) = (1-\vert{}z\vert{}^2) \, dA(z)$.

Similarly, comparing the spherical metric on the Riemann sphere $\widehat{\mathbb{C}}$ with the hyperbolic metric on the right half-plane $\mathbb{C}_0$ (where $\lambda_{\mathbb{C}_0}(z) = 1/\operatorname{Re}(z)$) yields the \textit{hyperbolic spherical derivative}
\begin{equation*}
	f^\#_{\mathrm{hyp}}(z) = 2 \operatorname{Re}(z) \frac{|f'(z)|}{1+|f(z)|^2} = 2 \operatorname{Re}(z) f^\#_{Euc}(z).
\end{equation*}
Given $f\in\mathcal{N}_u$ satisfying $\sigma_u(f)\le 0$, we assume
\begin{equation*}
	\vert\vert\vert f \vert\vert\vert_0:= \limsup_{\sigma\to 0^+}\lim_{T\to\infty}\frac{1}{2T}\int_{-T}^{T}\log(1+|f(\sigma+it)|^2)\,dt <\infty.
\end{equation*}
Note that 
\begin{equation*}
	\frac{1}{2}\vert\vert\vert f \vert\vert\vert_0 \le \vert\vert\vert f \vert\vert\vert_0 \le \log(2)+\vert\vert\vert f \vert\vert\vert_0.
\end{equation*}
The advantages of this new norm $\vert\vert\vert \cdot \vert\vert\vert_0$ lies on the integral representation \eqref{Dipon-vasu-p2-eqn-021} given below.
By applying the characterization from Theorem~\ref{Dipon-vasu-p2-thm-08} for the norm of $f \in \mathcal{N}_u$ in terms of vertical limit functions particularly for the subharmonic function $\log(1+\abs{f}^2)$, we obtain

	\begin{equation}\label{Dipon-vasu-p2-eqn-021}
	\vert\vert\vert f \vert\vert\vert_0\asymp \log(1+ |f_\chi(+\infty)|^2) + 4\int_{\mathbb{T}^{\infty}}\int_{0}^{\infty}\int_{\mathbb{R}}\frac{|f_\chi'(\sigma+it)|^2}{(1+|f_\chi(\sigma+it)|^2)^2}\,\sigma \,d\mu(t\,) \,d\sigma\, dm_\infty(\chi),
\end{equation}
where $\mu$ is a probability measure on $\mathbb{R}.$
An application of Fubini's theorem together with the definition of $f^\#_{hyp}$ shows that  
\begin{equation*}
	\int_{\mathbb{T}^{\infty}}\int_{0}^{\infty}\int_{\mathbb{R}}\frac{|f_\chi'(\sigma+it)|^2}{(1+|f_\chi(\sigma+it)|^2)^2}\,\sigma \,d\mu(t\,) \,d\sigma\, dm_\infty(\chi) = \frac{1}{4}\int_{\mathbb{T}^\infty}\int_{\mathbb{C}_0}\abs{(f^\#_{hyp})_\chi(z)}^2\,\frac{d\mu(t)\,d\sigma}{\sigma}\,dm_\infty(\chi).
\end{equation*}
Consequently 
\begin{equation*}
	f\in\mathcal{N}_u \iff f^\#_{hyp}\in L^2\left(\mathbb{T}^\infty\times\mathbb{C}_0\, ,\, dm_\infty(\chi)\frac{d\mu(t)\,d\sigma}{\sigma}\right).
\end{equation*}

In particular, if $f\in\mathcal{N}_u$ then $(f^\#_{hyp})_\chi\in L^2\left(\mathbb{C}_0\, ,\,\frac{d\mu(t)\,d\sigma}{\sigma}\right)$ for almost every $\chi\in\mathbb{T}^\infty.$ But the converse fails in general: a measurable function can be finite almost everywhere while its integral is infinite. To hold the converse, we need the sufficient condition 
\begin{equation*}
	{ess\,sup}_{\chi\in\mathbb{T}^\infty}\vert\vert\vert (f^\#_{hyp})_\chi \vert\vert\vert_{L^2(d\nu)} <\infty,\,\,\,\mbox{where}\,\,\,d\nu(t) = \frac{d\mu(t)\,d\sigma}{\sigma}.
\end{equation*}

\subsection{Invertible elements of the Nevanlinna class} This subsection is motivated by the work of Haldimann and Jarchow \cite{Haldimann-Studia-2001} on classical Privalov spaces $N^p(\mathbb{D})$ for $1 < p < \infty$, which consist of all holomorphic functions $f$ on the unit disk $\mathbb{D}$ satisfying
\begin{equation*}
	\sup_{0 \le r < 1} \int_{0}^{2\pi} \bigl(\log^+ |f(re^{i\theta})|\bigr)^p \, \frac{d\theta}{2\pi} < \infty.
\end{equation*}
The Privalov classes were introduced in the foundational work of Privalov \cite{Privalov-1941} in the context of the unit disk.
Equipped with the weighted measure $dm_\alpha(z) := (\alpha+1)(1-\vert{}z\vert{}^2)^\alpha \, dm(z)$ for $\alpha > -1$, where $dm$ denotes the normalized area measure on $\mathbb{D}$, Haldimann and Jarchow \cite{Haldimann-Studia-2001} introduced the weighted Privalov classes $N^p_\alpha(\mathbb{D})$. Recall that for $0 < p < \infty$, the weighted Bergman space $A^p_\alpha(\mathbb{D})$ consists of all functions $f \in \operatorname{Hol}(\mathbb{D})$ such that $f \in L^p(\mathbb{D}, dm_\alpha)$, where $\operatorname{Hol}(\mathbb{D})$ denotes the space of all holomorphic functions on $\mathbb{D}$. The class $N^p_\alpha(\mathbb{D})$ forms an $F$-algebra, often referred to as a Nevanlinna algebra (see \cite{Haldimann-Studia-2001} for details). In particular, they proved that $f \in N^p_\alpha(\mathbb{D})$ is invertible if and only if $f = e^g$ for some $g \in A^p_\alpha(\mathbb{D})$. We will provide an analogous result in our setting. Furthermore, it is straightforward to observe that the Nevanlinna class of Dirichlet series $\mathcal{N}_u$ (satisfying $\sigma_u(f) \le 0$) forms a unital $F$-algebra over $\mathbb{C}$. Its multiplicative identity is the constant Dirichlet series $\mathbf{1} := \sum_{n \ge 1} \delta_{n1} n^{-s}$, where $\delta_{ij}$ denotes the Kronecker delta.

\begin{thm}\label{Dipon-vasu-p2-thm-06}
	A Dirichlet series $f \in \mathcal{N}_u$ with $\sigma_u(f) \le 0$ is invertible, if $f = \exp(g)$ for some $g \in \mathcal{H}^1$.
\end{thm}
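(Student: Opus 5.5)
The plan is to show directly that the candidate inverse $h:=\exp(-g)$ lies in $\mathcal{N}_u$. Since $fh=\exp(g)\exp(-g)=\mathbf 1$ pointwise, and both $f$ and $h$ are represented by Dirichlet series in a common half-plane, the Dirichlet coefficients of the product are those of $\mathbf 1$. So it suffices to check the two defining requirements of Definition~\ref{Dipon-vasu-p2-defn-02} for $h$: that $\sigma_u(h)\le 0$, and that the logarithmic mean in \eqref{Dipon-vasu-p2-eqn-002} is finite.

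\textbf{Step 1: $h$ is a Dirichlet series with $\sigma_u(h)\le 0$.} Write $g(s)=b_1+\sum_{n\ge2}b_nn^{-s}$. In a half-plane $\mathbb{C}_\theta$ with $\theta$ large, $g-b_1$ is small and converges absolutely, so expanding the exponential series shows that $\exp(\pm g)$ are Dirichlet series there, with constant term $e^{\pm b_1}\neq 0$. To reach $\sigma_u(h)\le0$ I would use Bohr's theorem $\sigma_u=\sigma_b$, so it is enough that $h$ is bounded on each $\mathbb{C}_\theta$, $\theta>0$. Since $|h|=\exp(-\real g)\le \exp(|g|)$, this follows once $g$ is bounded on every $\mathbb{C}_\theta$. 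I read the hypothesis $g\in\mathcal H^1$ as including the standing convention of this section that $\sigma_u(g)\le 0$, which gives exactly that boundedness. I expect this to be the main obstacle. Membership in $\mathcal H^1$ alone only controls $g$ on $\mathbb{C}_{1/2+\varepsilon}$, and boundedness of $f=e^{g}$ gives no lower bound on $|f|$. If one wants to avoid the convention, the first thing I would try is to derive $\sigma_u(g)\le0$ from the relation $g=\log f$ together with the fact that $f$ omits $0$ on $\mathbb{C}_0$. I am not confident that this works in general, so I would state the convention explicitly.

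\textbf{Step 2: the Nevanlinna bound.} Combine the inequality \eqref{Dipon-vasu-p2-eqn-044} with the pointwise estimate $\log^+|h|=(-\real g)^+\le|g|$. This gives, for every $\sigma>0$,
\begin{equation*}
\mathcal{A}(h,\sigma)\le \log 2+\lim_{T\to\infty}\frac{1}{2T}\int_{-T}^{T}|g(\sigma+it)|\,dt=\log2+\norm{g_\sigma}_{\mathcal H^1}\le \log 2+\norm{g}_{\mathcal H^1}.
\end{equation*}
Here the middle limit exists by almost periodicity, since $\sigma_u(g)\le0$. The last inequality uses the contractivity of horizontal translation on $\mathcal H^1$, which can be seen via the Bohr lift: $\mathcal{B}g_\sigma$ is a Poisson-type average of $\mathcal{B}g$ on $\mathbb{T}^\infty$. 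Taking $\limsup_{\sigma\to0^+}$ gives $\norm{h}_0\le\log2+\norm{g}_{\mathcal H^1}<\infty$, so $h\in\mathcal N_u$.

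\textbf{Step 3: conclusion.} With $h\in\mathcal N_u$ and $fh=\mathbf 1$, the element $f$ is invertible in the unital $F$-algebra $\mathcal N_u$. The same estimate applied to $g$ itself gives $\norm f_0\le\log2+\norm g_{\mathcal H^1}$, which is consistent with the hypothesis $f\in\mathcal N_u$.
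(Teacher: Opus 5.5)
Your Step 2 is the paper's proof. The paper also dominates $|\log|f||$, and hence $\log^+|1/f|$, pointwise by $|g|$, averages along vertical lines, and uses finiteness of the $\mathcal{H}^1$ means to conclude that both $f$ and $1/f=\exp(-g)$ lie in $\mathcal{N}_u$. Your explicit bound $\|1/f\|_0\le \log 2+\|g\|_{\mathcal{H}^1}$ is a slight sharpening. The paper never checks that $1/f$ is a Dirichlet series with $\sigma_u(1/f)\le 0$, so your Step 1 is more careful than the source. The one thing your write-up leaves open is that you close Step 1 by adding the hypothesis $\sigma_u(g)\le 0$, which is not in the statement. The paper uses this hypothesis implicitly as well, when it treats $\lim_{T\to\infty}\frac{1}{2T}\int_{-T}^{T}|g(\sigma+it)|\,dt$ for small $\sigma$ as controlled by $g\in\mathcal{H}^1$.

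That hypothesis actually follows from the others, and the fallback you sketched can be completed using only the upper bound on $|f|$. Read $f=e^{g}$ on $\mathbb{C}_0$, as the paper does, and fix $\theta>0$.

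Since $f$ is bounded on $\mathbb{C}_\theta$, we have $\operatorname{Re} g\le M$ there. Set $u:=(g-M)/(g-M-2)$. This function is holomorphic on $\mathbb{C}_\theta$ with $|u|<1$, and it is a Dirichlet series in a far half-plane. Moreover $|u(+\infty)|<1$, because $\operatorname{Re} g(+\infty)\le M$. Bohr's theorem then gives $\sigma_u(u)\le\theta$.

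Now suppose $|u(s_n)|\to 1$ with $\operatorname{Re} s_n\ge \theta'>\theta$. Then $\operatorname{Re} s_n$ stays bounded, since $u\to u(+\infty)$ uniformly as $\operatorname{Re} s\to\infty$. Along a subsequence, the translates $u(\cdot+i\operatorname{Im} s_n)$ converge uniformly on $\overline{\mathbb{C}_{\theta''}}$, for $\theta<\theta''<\theta'$, to a vertical limit $u_\chi$. This limit satisfies $|u_\chi|\le 1$ and attains modulus $1$ at an interior point. By the maximum principle, $u_\chi$ is a unimodular constant, which contradicts $u_\chi(+\infty)=u(+\infty)$.

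Hence $\sup_{\mathbb{C}_{\theta'}}|u|<1$, so $g=M+2u/(u-1)$ is bounded on $\mathbb{C}_{\theta'}$. Since $\theta'>0$ is arbitrary, $\sigma_u(g)=\sigma_b(g)\le 0$. Your Steps 1--3 then go through verbatim without any added convention.
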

\begin{pf}
First, recall a lemma from \cite[Lemma~4.3]{Brevig-AdvMath-2021}, which asserts that if $f \in \mathcal{N}_u$ then
\begin{equation*}
	\limsup_{\sigma \to 0^+}\lim_{T \to \infty}\frac{1}{2T}\int_{-T}^{T}\bigl|\log|f(\sigma+it)|\bigr|\,dt < \infty.
\end{equation*}
Moreover converse holds easily.
Now suppose $f(s) = \exp(g(s))$ for some $g \in \mathcal{H}^1$. Then $\vert{}\log\vert{}f(s)\vert{}\vert{} \le \vert{}g(s)\vert{}$. Evaluating this inequality along vertical lines yields
\begin{equation*}
	\limsup_{\sigma \to 0^+}\lim_{T \to \infty}\frac{1}{2T}\int_{-T}^{T}\bigl|\log|f(\sigma+it)|\bigr|\,dt \le \limsup_{\sigma \to 0^+}\lim_{T \to \infty}\frac{1}{2T}\int_{-T}^{T}|g(\sigma+it)|\,dt.
\end{equation*}
Since $g \in \mathcal{H}^1$, the right-hand side is finite, which implies $f = \exp(g) \in \mathcal{N}_u$. By an identical calculation, $1/f = \exp(-g) \in \mathcal{N}_u$. Therefore, $f$ is invertible in $\mathcal{N}_u$.
\end{pf}

\begin{example}
    However, the converse of Theorem \ref{Dipon-vasu-p2-thm-06} does not hold in general.
	Consider the Dirichlet series
	\begin{equation*}
		g(s) = \frac{1+2^{-s}}{1-2^{-s}} = 1 + 2\sum_{n=1}^{\infty} 2^{-ns},
	\end{equation*}
	where $s\in\mathbb{C}_0$ and define $f(s) = \exp(g(s))$. Clearly $f$ is a Dirichlet series with $\sigma_u(f)\le 0.$ Set $r = 2^{-\sigma}$ and $\theta = t \log 2$. For $\sigma > 0$, we have $\vert{}2^{-s}\vert{} < 1$ and
	\begin{equation}\label{Dipon-vasu-p2-eqn-049}
		\operatorname{Re}(g(s)) = \operatorname{Re}\left(\frac{1+2^{-s}}{1-2^{-s}}\right) = \frac{1-r^2}{|1-r e^{-i\theta}|^2}.
	\end{equation}
	Thus $\operatorname{Re}( g(\sigma+it))$ is a Poisson kernel. By periodicity,
	\begin{equation}\label{Dipon-vasu-p2-eqn-048}
		\lim_{T \to \infty} \frac{1}{2T} \int_{-T}^{T} \operatorname{Re} (g(\sigma+it)) \, dt = 1, \quad \sigma > 0.
	\end{equation}
	Since $\operatorname{Re} (g(s))>0$, it follows that $\log^+\vert{}f(s)\vert{} = \log\vert{}f(s)\vert{} = \operatorname{Re} (g(s))$. Applying \eqref{Dipon-vasu-p2-eqn-048} yields
	\begin{equation*}
		\limsup_{\sigma \to 0^+} \lim_{T \to \infty} \frac{1}{2T} \int_{-T}^{T} \log^+|f(\sigma+it)| \, dt = 1.
	\end{equation*}
	Hence $f \in \mathcal{N}_u$. Furthermore, $\vert{}1/f(s)\vert{} = e^{-\operatorname{Re} g(s)} \le 1$, which implies $1/f \in \mathcal{H}^\infty \subset \mathcal{N}_u$. Consequently, $f$ is an invertible element of $\mathcal{N}_u$.
	We now claim, however, that $g \notin \mathcal{H}^1$. Indeed, $g$ is the periodic Dirichlet series obtained from the disk function $G(z) = (1+z)/(1-z)$ via the substitution $z = 2^{-s}$. The boundary values of $G$ satisfy $\vert{}G(e^{it})\vert{} = \left\vert{}\cot(t/2)\right\vert{} \asymp 1/\vert{}t\vert{}$ as $t \to 0$, from which it follows that
	\begin{equation*}
		\int_{-\pi}^{\pi} |G(e^{it})| \, dt = \infty.
	\end{equation*}
	Thus $G \notin H^1(\mathbb{D})$. By the Bohr correspondence for periodic Dirichlet series, we conclude that $g\notin\mathcal{H}^1.$ \qed
\end{example}

Since the converse of Theorem~\ref{Dipon-vasu-p2-thm-06} does not hold in general, it is natural to investigate the precise conditions under which this converse remains valid. This motivates the study of the Smirnov class of Dirichlet series, denoted by $\mathcal{N}_*$, which constitutes a distinguished subclass of the Nevanlinna class $\mathcal{N}_u$. Guo \textit{et al.} \cite{GuoZhou-AnnalFourier-2025} defined $\mathcal{N}_*$ as the completion of the space of Dirichlet polynomials under the metric \eqref{Dipon-vasu-p2-eqn-004}. Thus $\mathcal{N}_*$ can be viewed as the limiting space of the Hardy spaces $\mathcal{H}^p$ of Dirichlet series as $p \to 0^+$. Moreover, via the Bohr correspondence, there is a canonical isometric isomorphism from $\mathcal{N}_*$ onto the Smirnov class $N_*(\mathbb{D}^\infty_1)$ of infinitely many variables. Motivated by the construction of the Nevanlinna class of Dirichlet series, we introduce the Privalov classes of Dirichlet series. 

\begin{defn}\label{Dipon-vasu-p2-defn-03}A Dirichlet series $f$ with uniform convergence abscissa $\sigma_u(f) \le 0$ belongs to the \textit{Privalov class} $\mathcal{N}^p_u$ of Dirichlet series (for $1 < p < \infty$) if
	\begin{equation}
		\limsup_{\sigma \to 0^+} \lim_{T \to \infty} \frac{1}{2T} \int_{-T}^{T} \bigl(\log^+ |f(\sigma + it)|\bigr)^p \, dt < \infty,
	\end{equation}
where $\log^+(x) = \max\{\log x, 0\}$.
\end{defn}

The space $\mathcal{N}^p_u$ admits a metric structure analogous to that induced by $\Vert{}\cdot\Vert{}_0$. Furthermore, we have the proper inclusions
\begin{equation*}
	\bigcup_{p > 1} \mathcal{N}^p_u \subsetneq \mathcal{N}_* \subsetneq \mathcal{N}_u.
\end{equation*}
This inclusions follows directly from the classical results of Mochizuki \cite{Mochi-PAMS-1989} in the disk setting. The unit disk counterpart to the following  Theorem~\ref{Dipon-vasu-p2-thm-03} was established by Stoll in \cite{Stoll-Polon-1977}.

	
		
		

\begin{thm}\label{Dipon-vasu-p2-thm-03}
	Let $1 < p < \infty$. A Dirichlet series $f \in \mathcal{N}^p_u$ with $\sigma_u(f) \le 0$ is invertible if and only if $f = \exp(g)$ for some $g \in \mathcal{H}^p$.
\end{thm}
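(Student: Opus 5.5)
The argument runs in the two directions separately, mirroring Stoll's disk proof and lifting it to the polytorus through the Bohr correspondence and Lemma~\ref{Dipon-vasu-p2-lem-09}.

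\emph{Sufficiency.} Suppose $f=\exp(g)$ with $g\in\mathcal{H}^p$; we need $\sigma_u(g)\le 0$, or at least that $g$ is holomorphic on $\mathbb{C}_0$ with $\sigma_u(f)\le0$, which is part of the hypothesis on $f$. Since $\log^+|f|\le |\operatorname{Re} g|\le |g|$ and likewise $\log^+|1/f|\le|g|$, we get
\begin{equation*}
\lim_{T\to\infty}\frac{1}{2T}\int_{-T}^{T}\bigl(\log^+|f^{\pm1}(\sigma+it)|\bigr)^p\,dt\le \lim_{T\to\infty}\frac{1}{2T}\int_{-T}^{T}|g(\sigma+it)|^p\,dt\le \|g\|_{\mathcal{H}^p}^p,
\end{equation*}
using that $\sigma\mapsto\|g_\sigma\|_{\mathcal{H}^p}$ is non-increasing. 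The function $1/f=\exp(-g)$ is a Dirichlet series, since $\mathcal{B}$ is multiplicative and $\exp(-g)$ is a uniform limit of partial sums on each $\mathbb{C}_\varepsilon$. Its abscissa satisfies $\sigma_u(1/f)\le 0$ because $\exp(-g)$ is bounded on every $\mathbb{C}_\varepsilon$: here $g_\varepsilon$ is bounded, as $\mathcal{H}^p$ functions are bounded on $\overline{\mathbb{C}_{1/2+\varepsilon}}$, and the Bohr theorem $\sigma_u=\sigma_b$ handles the rest. I expect the region $0<\operatorname{Re}s\le 1/2$ to require $\sigma_u(g)\le0$. I will assume that this is implicit, or derive it from $g=\log f$ with $f$ and $1/f$ bounded on each $\mathbb{C}_\varepsilon$. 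Hence $1/f\in\mathcal{N}^p_u$ and $f$ is invertible.

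\emph{Necessity.} Let $f,1/f\in\mathcal{N}^p_u$. Then $f$ has no zeros in $\mathbb{C}_0$, and $f$ and $1/f$ are bounded on each $\mathbb{C}_\varepsilon$ since $\sigma_u\le0$ for both. Thus $\log|f|$ is bounded on $\mathbb{C}_\varepsilon$, and a holomorphic logarithm $g=\log f$ exists. To see that $g$ is a Dirichlet series with $\sigma_u(g)\le0$, write $f=a_1(1+h)$ with $a_1=f(+\infty)\neq0$ (nonzero because $1/f$ is a Dirichlet series). Then $g=\log a_1+\log(1+h)$ is a Dirichlet series on a half-plane where $|h|<1$. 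It extends to $\mathbb{C}_0$, and it is bounded on $\mathbb{C}_\varepsilon$ provided $\operatorname{Im} g$ is bounded there. The boundedness of $\operatorname{Im} g$ follows from the Bohr lift: $\mathcal{B}f$ is a nonvanishing element of $A(\mathbb{T}^\infty)$ on the $\varepsilon$-shifted torus, and an invertible element of this Banach algebra whose range avoids $0$ and which is homotopic to a constant (via $r\mapsto(\mathcal{B}f)_{[r]}$) has a continuous logarithm in the algebra, since $\mathbb{T}^\infty$ has trivial $H^1$-type obstructions for such homotopies. Hence $\sigma_u(g)\le 0$.

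\emph{Main obstacle: $g\in\mathcal{H}^p$.} We have
\begin{equation*}
|\operatorname{Re} g|=|\log|f||=\log^+|f|+\log^+|1/f|,
\end{equation*}
so $\sup_{\sigma>0}\|(\operatorname{Re} g)_\sigma\|_{L^p}<\infty$ in the vertical-mean sense. Transferring to $\mathbb{T}^\infty$ via the Birkhoff--Oxtoby theorem, $\operatorname{Re}\mathcal{B}g_\sigma$ is uniformly bounded in $L^p(\mathbb{T}^\infty)$. The key step is the M.~Riesz theorem on $\mathbb{T}^\infty$ for $1<p<\infty$: the Riesz projection, i.e.\ the conjugate function relative to the ordering given by $\log n$ (equivalently the Helson-type conjugation), is bounded on $L^p(\mathbb{T}^\infty)$, with constant independent of dimension. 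This is obtained by applying the one-dimensional Riesz theorem along the Kronecker flow, or by noting that Helson's theorem for compact groups with ordered duals yields a dimension-free constant. It gives
\begin{equation*}
\|g_\sigma\|_{\mathcal{H}^p}\le C_p\bigl(\|\operatorname{Re} g_\sigma\|_{L^p}+|\operatorname{Im} g(+\infty)|\bigr)
\end{equation*}
uniformly in $\sigma$. Letting $\sigma\to0^+$, and using that $\sup_\sigma\|g_\sigma\|_{\mathcal{H}^p}=\|g\|_{\mathcal{H}^p}$ for series with $\sigma_u\le0$, we conclude $g\in\mathcal{H}^p$. This step, which is the failure point at $p=1$ illustrated by the example above, is where I expect the real work to lie.
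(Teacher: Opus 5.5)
Your argument follows the paper's proof. Sufficiency comes from $\log^+|f^{\pm1}|\le|\operatorname{Re} g|\le|g|$. Necessity comes from $|\operatorname{Re} g|=\log^+|f|+\log^+|1/f|$, a dimension-free conjugate-function bound on $\mathbb{T}^\infty$, and the description of $\mathcal{H}^p$ through $\sup_{\sigma>0}\|g_\sigma\|_{\mathcal{H}^p}$.

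On the necessity side you are more careful than the paper in two places.
\begin{enumerate}
\item The bound $\|\operatorname{Im} G_\sigma\|_{L^p}\le C_p\|\operatorname{Re} G_\sigma\|_{L^p}+|\operatorname{Im} g(+\infty)|$ is Helson's theorem for the order $q\mapsto\log q$ on $\mathbb{Q}_+$, as you say. By contrast, the projection of $L^p(\mathbb{T}^\infty)$ onto $\mathbb{H}^p(\mathbb{T}^\infty)$ is unbounded for $p\neq 2$: on $\mathbb{T}^d$ its norm is the $d$-th power of the one-variable norm. That projection is what ``Riesz projection'' normally means, and it is what the paper literally invokes, so avoid that name for the tool you actually use.
\item You supply $\sigma_u(g)\le0$, which the paper silently assumes when it takes the Bohr lift of $g$. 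For this you only need two facts. First, $r\mapsto(\mathcal{B}f_\varepsilon)_{[r]}$, $0\le r\le 1$, is a path in the invertible group of $A(\mathbb{T}^\infty)$, with inverses $(\mathcal{B}(1/f)_\varepsilon)_{[r]}$, running from the constant $f(+\infty)\neq0$ to $\mathcal{B}f_\varepsilon$. Second, the principal component of the invertible group of a commutative Banach algebra is the exponential image of the algebra. No topological property of $\mathbb{T}^\infty$ is involved; indeed $\chi\mapsto\chi_1$ is a nonvanishing function on $\mathbb{T}^\infty$ with no continuous logarithm.
\end{enumerate}

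The genuine gap is in sufficiency. To put $1/f$ in $\mathcal{N}^p_u$ you need $\sigma_u(1/f)\le 0$, i.e.\ $1/f$ bounded on every $\mathbb{C}_\varepsilon$. Your fallback derives $\sigma_u(g)\le0$ from boundedness of $f$ and $1/f$ on each $\mathbb{C}_\varepsilon$, but boundedness of $1/f$ is exactly what is to be proved, so this is circular. Until it is established, the vertical means of $|g(\sigma+it)|^p$ for $0<\sigma\le 1/2$ in your display are not known to make sense. The paper's ``analogous to Theorem~\ref{Dipon-vasu-p2-thm-06}'' skips the same point.

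One way to close the gap:
\begin{enumerate}
\item For $g\in\mathcal{H}^p$ with $p>1$, the vertical limit $g_\chi$ extends holomorphically to $\mathbb{C}_0$ for almost every $\chi$. Since $f_\chi=\exp(g_\chi)$ on a half-plane of absolute convergence, the identity theorem shows $f_\chi$ is zero-free on $\mathbb{C}_0$ for almost every $\chi$.
\item The map $\chi\mapsto f_\chi$ is continuous into uniform convergence on each $\overline{\mathbb{C}_\varepsilon}$. By Hurwitz's theorem, a zero of some $f_{\chi_0}$ would force zeros of $f_\chi$ for all $\chi$ in an open neighbourhood of $\chi_0$, which has positive Haar measure. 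Hence every $f_\chi$, in particular $f$, is zero-free on $\mathbb{C}_0$.
\item Use compactness of $[\varepsilon,R]\times\mathbb{T}^\infty$, the identity $\inf_t|f(\sigma+it)|=\min_\chi|f_\chi(\sigma)|$, and $f(+\infty)=e^{g(+\infty)}\neq 0$. Together these give $\inf_{\mathbb{C}_\varepsilon}|f|>0$, and Bohr's theorem then yields $\sigma_u(1/f)\le 0$.
\item Your necessity argument now produces a logarithm of $f$ with $\sigma_u\le0$, which differs from $g$ by a constant in $2\pi i\mathbb{Z}$. So $\sigma_u(g)\le 0$, and your bound of the vertical means by $\|g_\sigma\|_{\mathcal{H}^p}^p\le\|g\|_{\mathcal{H}^p}^p$ goes through for every $\sigma>0$.
\end{enumerate}
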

\begin{pf}
	One direction follows by an argument analogous to that in Theorem~\ref{Dipon-vasu-p2-thm-06}: if $f(s) = \exp(g(s))$ for some $g \in \mathcal{H}^p$, then $f \in \mathcal{N}^p_u$ with $\sigma_u(f) \le 0$ is invertible.
	
	Conversely, suppose that
	$f\in\mathcal N_u^p$, with $1<p<\infty$ and
	$\sigma_u(f)\le0$, is invertible in $\mathcal N_u^p$.
	Then $f$ and $1/f$ both belong to $\mathcal N_u^p$.
	In particular, $f$ is zero-free. Then there exists a holomorphic function $g$ on $\mathbb C_0$ such that
	\[
	f(s)=e^{g(s)}.
	\]
	Consequently, $\log|f(s)|=\operatorname{Re}(g(s))$ and hence
	\[
	|\operatorname{Re}(g(s))|
	=
	\log^+|f(s)|
	+
	\log^+\frac{1}{|f(s)|}.
	\]
	Using the inequality 
	\[
	(a+b)^p\le 2^{p-1}(a^p+b^p),
	\qquad a,b\ge0,
	\]
	we obtain
	\[
	|\operatorname{Re}(g(s))|^p
	\le
	2^{p-1}
	\left[
	\bigl(\log^+|f(s)|\bigr)^p
	+
	\left(\log^+\frac1{|f(s)|}\right)^p
	\right].
	\]
	Therefore, by the definition of $\mathcal N_u^p$, we have
	\[
	\limsup_{\sigma\to0^+}
	\lim_{T\to\infty}
	\frac1{2T}
	\int_{-T}^{T}
	|\operatorname{Re}(g(\sigma+it))|^p\,dt
	<\infty.
	\]
	
	Let $G=\mathcal Bg$ denote the Bohr lift of $g$. By the
	vertical-limit representation for Dirichlet series, the preceding
	estimate is equivalent to
	\[
	\sup_{\sigma>0}
	\int_{\mathbb T^\infty}
	|\operatorname{Re}(G_\sigma(\chi))|^p\,dm_\infty(\chi)
	<\infty,
	\]
	where
	\[
	G_\sigma(\chi)
	=
	G(2^{-\sigma}\chi_1,3^{-\sigma}\chi_2,\ldots),
	\]
	for almost every $\chi\in\mathbb{T}^\infty.$
	Since $1<p<\infty$, the Riesz projection on
	$L^p(\mathbb T^\infty)$ is bounded. Hence the harmonic conjugate
	of $\operatorname{Re}(G_\sigma)$ satisfies
	\[
	\|\operatorname{Im}(G_\sigma)\|_{L^p(\mathbb T^\infty)}
	\le
	C_p\|\operatorname{Re}(G_\sigma)\|_{L^p(\mathbb T^\infty)}
	+
	|\operatorname{Im}(G_\sigma(0))|.
	\]
	Since $G_\sigma(0)=g(+\infty)$ is independent of $\sigma$, we obtain
	\[
	\sup_{\sigma>0}
	\|G_\sigma\|_{L^p(\mathbb T^\infty)}
	<\infty.
	\]
	The standard Bohr characterization of the Hardy space
	$\mathcal H^p$ now gives $g\in\mathcal H^p.$
	This completes the proof.
\end{pf}
\begin{rem}
	Alternatively, the converse statement of Theorem \ref{Dipon-vasu-p2-thm-03} can be proved as follows. Since $\mathcal{N}^p_u \subset \mathcal{N}_*$ for $p > 1$, the function $f$ is invertible in the Smirnov class of Dirichlet series $\mathcal{N}_*$. Consequently, there exists an outer function $g$ such that $f(s) = \exp(g(s))$. Let $F = \mathcal{B}f$ and $G = \mathcal{B}g$ denote the Bohr lifts of $f$ and $g$ to the infinite torus $\mathbb{T}^\infty$. By \cite[Prop. 2.17]{GuoZhou-AnnalFourier-2025}, for every $\zeta \in \mathbb{D}^\infty_1$,
	\begin{equation*}
		\log|F(\zeta)| = \int_{\mathbb{T}^\infty} \mathbf{P}_\zeta \log|F^*| \, dm_\infty.
	\end{equation*}
	Since both $f$ and $1/f$ belong to $\mathcal{N}^p_u$, the functions $(\log^+\vert{}F^*\vert{})^p$ and $(\log^+\vert{}1/F^*\vert{})^p$ are integrable on $\mathbb{T}^\infty$. Noting that $\log^+\vert{}1/F^*\vert{} = \log^-\vert{}F^*\vert{}$, it follows that for $1 < p < \infty$,
	\begin{equation*}
		\log|F^*| \in L^p(\mathbb{T}^\infty).
	\end{equation*}
 Since $\log\vert{}F^*\vert{} \in L^p(\mathbb{T}^\infty)$ for $1 < p < \infty$, its Poisson integral defines the real part of $G(\zeta)$ for $\zeta \in \mathbb{D}^\infty_1$, with boundary values $\operatorname{Re}(G^*) = \log\vert{}F^*\vert{} \in L^p(\mathbb{T}^\infty)$ almost everywhere. By M. Riesz's theorem on conjugate functions \cite[p.~54]{Duren-H^p space-1970}, the boundary function satisfies $G^* \in L^p(\mathbb{T}^\infty)$, so $G \in \mathbb{H}^p(\mathbb{D}^\infty_1)$. Applying the inverse Bohr lift yields $g \in \mathcal{H}^p$.
\end{rem}

\vspace{2mm}

\noindent\textbf{Acknowledgment:} 
The second author gratefully acknowledges financial support from the CSIR–UGC, New Delhi, India.
 \vspace{1.5mm}

\noindent\textbf{Compliance of Ethical Standards:}
\vspace{1.5mm}\\
\noindent\textbf{Conflict of interest.} The authors declare that there is no conflict  of interest regarding the publication of this paper.
\vspace{1.5mm}

\noindent\textbf{Data availability statement.}  Data sharing is not applicable to this article as no datasets were generated or analyzed during the current study.\vspace{1.5mm}

\noindent\textbf{Authors contributions.} Both the authors have made equal contributions in reading, writing, and preparing the manuscript. \vspace{1.5mm}

\noindent\textbf{Disclosure of AI tools.} While preparing this manuscript, the authors used  ChatGPT as an auxiliary tool for a few mathematical discussions and to enhance the manuscript's organization, clarity, and language. Following its use, the authors reviewed and edited the content as necessary, accepting full responsibility for the final published article.

\end{document}